\documentclass{elsarticle}

\usepackage[utf8]{inputenc}
\usepackage{amsmath,amssymb,amsthm,amsfonts}
\usepackage{bm}
\usepackage[hidelinks,colorlinks = true]{hyperref}
\usepackage{enumitem}
\usepackage{blkarray}
\setenumerate{labelindent=0.25\parindent,leftmargin=*}
\setcounter{MaxMatrixCols}{12}
\usepackage{enumitem}
\usepackage{blkarray}
\usepackage{tikz-cd}
\usepackage{tcolorbox}
\newtcolorbox{mybox}{colback=red!5!white,colframe=red!75!black, sharp corners = all}
\usepackage{bclogo} %\bcpanchant
\def\ucsign{\bcpanchant}
%\newsavebox{\mybox}

%\newenvironment{underconst}{\noindent\begin{lrbox}{\mybox}\begin{minipage}{\textwidth}\par\noindent\hangindent=2em\hangafter=-2
%  \smash{\hbox to0pt{\hskip-\hangindent
%    \lower1.5\baselineskip\hbox{\ucsign}\hfill}}
%    }{\end{minipage}\end{lrbox}\fbox{\usebox{\mybox}}}

  %  {\lower\baselineskip\hbox{\ucsign}}
\DeclareMathOperator{\rk}{rk}

\DeclareMathOperator{\rad}{rad}

\DeclareMathOperator{\Ker}{ker}
\DeclareMathOperator{\Img}{Im}

\def\F{{\mathbb F}}

\def\M{{\mathcal M}}

\def\SCS{{\mathbf S}}
\let\det\undefined
\DeclareMathOperator{\det}{det}

\DeclareMathOperator{\sgn}{sgn}

\newenvironment{psmallmatrix}
  {\left(\begin{smallmatrix}}
  {\end{smallmatrix}\right)}

\theoremstyle{plain}
\newtheorem{theorem}{Theorem}[section]
\newtheorem*{theorem*}{Theorem}

\newtheorem{lemma}[theorem]{Lemma}

\newtheorem{corollary}[theorem]{Corollary}

\theoremstyle{definition}
\newtheorem{definition}[theorem]{Definition}
\newtheorem*{definition*}{Definition}

\newtheorem{example}[theorem]{Example}
\newtheorem*{example*}{Example}

\newtheorem{remark}[theorem]{Remark}
\newtheorem{note}[theorem]{Note}
\numberwithin{equation}{section}

\begin{document}

\title{Linear maps preserving the Cullis'
determinant. II}
\author{Alexander Guterman}
\ead{alexander.guterman@biu.ac.il}
\author{Andrey Yurkov\corref{cor1}}
\ead{andrey.yurkov@biu.ac.il}
\cortext[cor1]{Corresponding author}

\affiliation{organization={Department of Mathematics, Bar Ilan University},addressline={Ramat-Gan},postcode={5290002}, country={Israel}}

\begin{keyword}
\MSC[2020] 15A15 \sep 15A86 \sep 47B49\\
Cullis' determinant \sep linear preservers \sep rectangular matrices    
\end{keyword}

\begin{abstract}
This paper is the second in the series of papers devoted to the explicit description of linear maps preserving the Cullis' determinant of rectangular matrices with entries belonging to an arbitrary ground field which is large enough.

In this part we solve the linear preserver problem for the Cullis' determinant defined on the spaces of matrices of size $n\times k$ with $k \ge 4,\; n \ge k + 2$ and $n + k$ is odd. 

In comparison with the case when $n + k$ is even, in this case linear maps preserving the Cullis' determinant could be singular and are represented as a sum of two linear maps: first is two-sided matrix multiplication and second is any linear map whose image consists of matrices, all rows of which are equal.
\end{abstract}

\maketitle

\section{Introduction}
The theory of linear maps preserving matrix invariants is well-known direction of research in Linear Algebra connected with other fields of mathematics. It has a long history which dates back to the beginning of 20th century in the works of Frobenius~\cite{GF} and continues at present time (see~\cite{LAMA199233} for the survey of the results until the end of 20th century and~\cite{hogben_handbook_2014} for a brief introduction).

The Cullis' determinant is an invariant of rectangular matrix generalizing the notion of the ordinary determinant of square matrix which was introduced by Cullis in~\cite{cullis1913} and studied later by Radi\'{c} in~\cite{radic1966,radic2005,radic1991,radic2008}, Makarewicz, Mozgawa, Pikuta and Sza\l{}kowski in~\cite{makarewicz2014,makarewicz2016,makarewicz2020}, Amiri, Fathy and Bayat in~\cite{amiri2010}, and Nakagami and Yanay in~\cite{NAKAGAMI2007422}. Thus, the question regarding the description of linear maps preserving the Cullis' determinant arises naturally. The detailed introduction to the subject could be found in~\cite{Guterman2025}. %, which is the first paper in these series of papers.

In particular, \cite{Guterman2025} contains the description of linear maps preserving the Cullis determinant $\det_{n\,k}$ for the case when $ k \ge 4$, $n \ge k + 2$ and $n + k$ is even as it stated in the following theorem (all necessary definitions are given in Section~\ref{sec:prelim}).

\begin{theorem}[{\cite[Theorem~5.14]{Guterman2025}}]\label{MainTheoremEvenIntro}Assume that $k \ge 4, n \ge k + 2, n + k$ is even and $|\F| > k$. Let $T\colon \M_{n\,k} (\F) \to \M_{n\,k} (\F)$ be a linear map. Then $\det_{n\,k} (T(X)) = \det_{n\,k}(X)$ for all $X \in \M_{n\,k} (\F)$ if and only if there exist $A \in \M_{n\,n}(\F)$ and $B \in \M_{k\,k}(\F)$ such that
\[
\det_{n\,k} \Bigl(A(|i_1,\ldots, i_k]\Bigr) \det_k \Bigl(B\Bigr) = (-1)^{i_1 + \ldots + i_k - 1 - \ldots - k}
\]
for all increasing sequences $1 \le i_1 < \ldots < i_k \le n $ and
\[
T(X) = AXB
\]
for all $X \in \M_{n\,k} (\F).$
\end{theorem}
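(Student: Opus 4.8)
The plan is to prove the two directions separately: ``if'' is a short computation with the Cauchy--Binet formula, while ``only if'' is the substantial part, in which one first forces $T$ to be bijective (this is where $n+k$ even is used) and then recovers the two‑sided‑multiplication structure from the geometry of the Cullis hypersurface. Throughout, write $\Sigma(S)=\bigl(\sum_{i\in S}i\bigr)-(1+\dots+k)$ for a $k$-subset $S$, so the Cullis sign is $(-1)^{\Sigma(S)}$, and write $m_T(X)=\det_k\bigl(X[T\,|\,]\bigr)$ for the maximal minor of $X$ on the rows $T$.

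\textbf{The ``if'' direction.} Fix $A\in\M_{n\,n}(\F)$ and $B\in\M_{k\,k}(\F)$. For a $k$-subset $S$ of $\{1,\dots,n\}$ the rows-$S$ submatrix of $AXB$ is (rows-$S$ submatrix of $A$)$\cdot X\cdot B$, so its determinant is $\det_k(B)$ times that of (rows-$S$ submatrix of $A$)$\cdot X$; expanding the latter by Cauchy--Binet, multiplying by $(-1)^{\Sigma(S)}$, summing over $S$, and interchanging the order of summation yields
\[
\det_{n\,k}(AXB)=\det_k(B)\sum_{T}\det_{n\,k}\!\bigl(A(\,|\,T]\bigr)\,m_T(X),
\qquad
\det_{n\,k}(X)=\sum_{T}(-1)^{\Sigma(T)}\,m_T(X),
\]
the sums running over $k$-subsets $T$ of $\{1,\dots,n\}$, where $A(\,|\,T]$ is the $n\times k$ matrix formed by the columns $T$ of $A$, as in the theorem. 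Since the $\binom nk$ functions $m_T$ are linearly independent on $\M_{n\,k}(\F)$ (restrict to matrices supported on one fixed set of $k$ rows), $\det_{n\,k}(AXB)=\det_{n\,k}(X)$ for all $X$ is equivalent to $\det_k(B)\,\det_{n\,k}\!\bigl(A(\,|\,T]\bigr)=(-1)^{\Sigma(T)}$ for every $T$. This settles ``if'' and also shows the displayed conditions are necessary for every preserver of this special form.

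\textbf{Bijectivity of a preserver.} View $\det_{n\,k}$ as an alternating $k$-linear form in the columns of $X$; it is represented by the fixed tensor $\omega=\sum_{T}(-1)^{\Sigma(T)}e^{*}_{i_1}\wedge\dots\wedge e^{*}_{i_k}\in\bigwedge^{k}(\F^{n})^{*}$, with $\det_{n\,k}(X)=\omega(x_1,\dots,x_k)$ for $x_1,\dots,x_k$ the columns of $X$. Put $N_\omega=\{v\in\F^{n}:\iota_v\omega=0\}$. If $T(X_0)=0$, linearity gives $\det_{n\,k}(X_0+Y)=\det_{n\,k}(T(Y))=\det_{n\,k}(Y)$ for all $Y$; expanding $\det_{n\,k}(X_0+Y)-\det_{n\,k}(Y)$ multilinearly in the columns, its part of degree $k-1$ in $Y$ must vanish identically, and setting the columns of $Y$ to zero one at a time forces every column of $X_0$ to lie in $N_\omega$. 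The first key lemma is the parity dichotomy
\[
N_\omega=\{0\}\ \Longleftrightarrow\ n+k\ \text{is even},
\qquad\text{whereas}\qquad
N_\omega=\langle\mathbf 1\rangle\ \ \text{when }n+k\ \text{is odd}
\]
(here $\mathbf 1$ is the all-ones vector); this is exactly the phenomenon distinguishing the even and odd cases, the space $\langle\mathbf 1\rangle\otimes\F^{k}$ of matrices with all rows equal being the source of the extra summand in the companion paper. Granting the lemma, when $n+k$ is even we get $X_0=0$, so $T$ is injective and hence bijective.

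\textbf{Recovering $X\mapsto AXB$.} Now $T$ is bijective and maps the hypersurface $\mathcal Z=\{X:\det_{n\,k}(X)=0\}$ onto itself (it is irreducible since $\det_{n\,k}$ is irreducible over a field with $|\F|>k$). For $v\in\F^{k}\setminus\{0\}$ the subspace $L_v=\{X:Xv=0\}$ has dimension $n(k-1)$ and lies in $\mathcal Z$. The second key lemma, using $k\ge 4$, $n\ge k+2$, $|\F|>k$ and $N_\omega=\{0\}$, is that the $L_v$ are \emph{precisely} the linear subspaces of $\mathcal Z$ of maximal dimension $n(k-1)$; the hypothesis $N_\omega=\{0\}$ is essential here, as in the odd case this classification fails, which is the geometric counterpart of the additional ``rows equal'' summand. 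Consequently $T$ permutes the family $\{L_v\}$; the incidence pattern of the intersections $L_v\cap L_{v'}$ forces the induced self-map of $\mathbb P(\F^{k})$ to be projective-linear, say represented by $B_0\in GL_k(\F)$, and replacing $T$ by $X\mapsto T(XB_0)$ (which only rescales $\det_{n\,k}$ by the constant $\det_k(B_0)$) we may assume $T$ fixes every $L_v$. Intersecting the $L_{e_i}$ then shows $T$ preserves each ``column subspace'' $\{X:\text{all columns but the }j\text{-th vanish}\}\cong\F^{n}$, so $T=\bigoplus_{j=1}^{k}T_j$ with $T_j\in GL_n(\F)$ acting on the $j$-th column. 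Feeding this into $\omega(T_1y_1,\dots,T_ky_k)=\lambda\,\omega(y_1,\dots,y_k)$ and taking $y_i=y_j=y$ for a pair $i\ne j$ gives $\iota_{T_iy}\iota_{T_jy}\omega=0$ for all $y$; the third key lemma is that this forces $T_i$ and $T_j$ to be proportional. Hence all $T_j$ are scalar multiples of one $A\in GL_n(\F)$, so $T(X)=AXD$ for a diagonal $D$ after the twist, and undoing the twist gives $T(X)=AXB$ for some $B\in GL_k(\F)$. Substituting $T(X)=AXB$ into $\det_{n\,k}(T(X))=\det_{n\,k}(X)$ and invoking the Cauchy--Binet identity of the first step yields $\det_k(B)\,\det_{n\,k}\!\bigl(A(\,|\,i_1,\dots,i_k]\bigr)=(-1)^{\Sigma(\{i_1,\dots,i_k\})}$, which is the assertion of the theorem.

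\textbf{Where the difficulty lies.} The ``if'' part is routine. In ``only if'' everything rests on the three lemmas. The hardest is the classification of the maximal-dimensional linear subspaces of $\mathcal Z$ --- a Flanders/Dieudonn\'e-type rigidity statement adapted to the Cullis determinant --- which is where the quantitative hypotheses $k\ge 4$ and $n\ge k+2$ are genuinely needed and whose failure in the odd case is precisely what allows the extra summand. The parity lemma $N_\omega=\{0\}\Leftrightarrow n+k$ even (and $N_\omega=\langle\mathbf 1\rangle$ otherwise) is elementary once set up but is the conceptual heart, and the rigidity statement ``$\iota_{T_iy}\iota_{T_jy}\omega\equiv 0\Rightarrow T_i\parallel T_j$'' reduces, after a polarization, to checking that no two linearly independent vectors annihilate $\omega$ in a pair of slots, which again uses $n\ge k+2$. (An alternative to the last steps would exploit the Cullis cofactor identity $\partial_{X_{ij}}\det_{n\,k}(X)=\pm\det_{n-1\,k-1}(\widehat X_{ij})$, $\widehat X_{ij}$ being the deletion of row $i$ and column $j$, which gives the equivariance $T^{*}\circ\nabla\det_{n\,k}\circ T=\nabla\det_{n\,k}$ and can be iterated down to the linear functional $\det_{n-k+1\,1}$; the combinatorial bookkeeping, however, appears comparable.)
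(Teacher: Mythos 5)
This statement is not proved in the present paper: it is Theorem~5.14 of the companion paper \cite{Guterman2025}, imported here verbatim (and reused as Lemma~\ref{lem:MainTheoremEven}), so there is no in-text proof to compare against. Judged on its own terms, your ``if'' direction is correct and complete: the Cauchy--Binet expansion $\det_{n\,k}(AXB)=\det_k(B)\sum_T\det_{n\,k}(A(|T])\,m_T(X)$ together with the linear independence of the maximal minors $m_T$ is exactly the content of Lemma~\ref{lem:TwoSidedMulPreservesDet}. The reduction of injectivity to the triviality of $N_\omega$ (equivalently, of $\rad(\det_{n\,k})$) for $n+k$ even is also sound and consistent with what Section~\ref{sec:kernels} establishes in the odd case.

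The ``only if'' direction, however, has a genuine gap: the entire difficulty of the theorem is concentrated in your three ``key lemmas,'' none of which is proved, and the central one --- that the $L_v=\{X:Xv=0\}$ are precisely the maximal-dimensional linear subspaces of $\{\det_{n\,k}=0\}$ --- is not obviously true and is not merely a citation-sized fact. In particular, you must exclude linear subspaces of $\mathcal Z$ consisting largely of full-rank matrices; e.g.\ if some hyperplane $H\subseteq\F^n$ satisfied $\iota_{h}\iota_{h'}\omega=0$ for all $h,h'\in H$, one would get a subspace of dimension $nk-2>n(k-1)$ inside $\mathcal Z$, and ruling this out is essentially equivalent to your third lemma --- which Example~\ref{cex:K3NKEvenInt} shows genuinely fails for $k=3$, so any proof must use $k\ge4$ in a nontrivial way. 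A second concrete defect: the fundamental theorem of projective geometry applied to the induced permutation of $\{L_v\}$ yields only a projective \emph{semilinear} map of $\mathbb P(\F^k)$; you need an extra argument (using that $T$ preserves the values of $\det_{n\,k}$, not just its zero set) to kill the field automorphism before you can write $B_0\in GL_k(\F)$. Finally, note that the cited proof evidently proceeds differently: Lemma~\ref{lem:CullisDeg1Rank1} (rank-one preservation, extracted from the degree of $\lambda\mapsto\det_{n\,k}(A+\lambda B)$) is the entry point for the classical rank-one--preserver structure theory, which delivers $T(X)=AXB$ without classifying linear subspaces of the hypersurface; your route is a plausible alternative but, as written, it is an outline whose load-bearing steps remain unestablished.
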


The parity of $n + k$ is significant because if $n + k$ is odd, then there exist linear maps preserving the Cullis' determinant which do not have such form (Corollary~\ref{cor:AdditionLinearMapNPLusKOdd}). 

However, in this paper we show that the case $(n,k)$ with $n \ge k$ and $n + k$ odd reduces to the case $(n-1,k)$ (Lemma~\ref{lem:FromNKOddToNKEvenIfKerW}\ref{lem:FromNKOddToNKEvenIfKerW:part6}). Using this reduction we prove the main theorem of this paper providing the description of linear maps preserving $\det_{n\,k}$ if $n + k$ is odd.

\begin{theorem}[Theorem~\ref{thm:MainTheoremCullisNKOdd}]Assume that $|\F| > k \ge 4$, $n \ge k + 2$ and $n + k$ is odd. Let $T\colon \M_{n\,k} (\F) \to \M_{n\,k} (\F)$ be a linear map. Then $\det_{n\, k} (T(X)) = \det_{n\,k}(X)$ for all $X \in \M_{n\,k} (\F)$ if and only if  there exist $A \in \M_{n\, n}(\F)$ and $B \in \M_{k\, k}(\F)$ such that
\[
\det_{n\,k} \Bigl(A(|i_1,\ldots, i_k]\Bigr) \det_k \Bigl(B\Bigr) = (-1)^{i_1 + \ldots + i_k - 1 - \ldots - k}
\]
for all increasing sequences $1 \le i_1 < \ldots < i_k \le n $ and a linear map\linebreak $\phi\colon \M_{n\,k}(\F) \to W_{n\,k}$ such that
\[
\det_{n\,k} \Bigl(A(|i_1,\ldots, i_k]\Bigr) \det_k \Bigl(B\Bigr) = (-1)^{i_1 + \ldots + i_k - 1 - \ldots - k}
\]
for all increasing sequences $1 \le i_1 < \ldots < i_k \le n $ and
\begin{equation*}
T(X) = AXB + \phi(X)
\end{equation*}
for all $X \in \M_{n\,k} (\F).$ Here $W_{n\,k} \subseteq \M_{n\,k}(\F)$ denotes the space of matrices, all rows of which are equal, being a radical of $\det_{n\,k}$.
\end{theorem}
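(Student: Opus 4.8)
The plan is to treat the two implications separately. The ``if'' direction rests on two facts recalled above: $W_{n\,k}$ is the radical of $\det_{n\,k}$, so $\det_{n\,k}(AXB+\phi(X))=\det_{n\,k}(AXB)$ for any $\phi$ with image in $W_{n\,k}$; and the displayed minor identity is precisely what forces $\det_{n\,k}(AXB)=\det_{n\,k}(X)$, via the Cauchy--Binet expansion of $\det_{n\,k}(AXB)$ as a signed sum of products $\det_{n\,k}\bigl(A(|l_1,\dots,l_k]\bigr)\,\det_k(B)\,\det\bigl(X[l_1,\dots,l_k]\bigr)$, matched term by term with the defining signed-minor expansion of $\det_{n\,k}(X)$ (the maximal-minor functions of $X$ being linearly independent since $|\F|>k$). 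Combining the two gives $\det_{n\,k}(T(X))=\det_{n\,k}(X)$.

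For ``only if'', let $T$ preserve $\det_{n\,k}$. Note first that $n\ge k+2$ and $n+k$ odd force $n\ge k+3$, hence $n-1\ge k+2$ and $(n-1)+k$ is even. By the (somewhat delicate) Lemma~\ref{lem:FromNKOddToNKEvenIfKerW}, in particular part~\ref{lem:FromNKOddToNKEvenIfKerW:part6}, the problem drops one size: there is a linear surjection $\mathcal W\colon\M_{n\,k}(\F)\to\M_{n-1\,k}(\F)$ with $\Ker\mathcal W=W_{n\,k}$ through which $\det_{n\,k}$ factors, $\det_{n\,k}=\det_{n-1\,k}\circ\mathcal W$; fixing a linear section $s$ of $\mathcal W$, the map $\widetilde T:=\mathcal W\circ T\circ s$ preserves $\det_{n-1\,k}$, because $\det_{n-1\,k}\circ\widetilde T=\det_{n\,k}\circ T\circ s=\det_{n\,k}\circ s=\det_{n-1\,k}$. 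Applying Theorem~\ref{MainTheoremEvenIntro} to $\widetilde T$ yields $\widetilde A\in\M_{n-1\,n-1}(\F)$ and $B\in\M_{k\,k}(\F)$ with $\widetilde T(Y)=\widetilde AYB$ and $\det_{n-1\,k}\bigl(\widetilde A(|j_1,\dots,j_k]\bigr)\det_k(B)=(-1)^{j_1+\dots+j_k-1-\dots-k}$ for all $1\le j_1<\dots<j_k\le n-1$.

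It remains to lift this to size $n$. The decisive point is $T(W_{n\,k})\subseteq W_{n\,k}$: if $\mathcal W(T(v))=0$ then $T(v)\in\Ker\mathcal W=W_{n\,k}$ lies in the radical, so $\det_{n\,k}(Y+v)=\det_{n\,k}(T(Y)+T(v))=\det_{n\,k}(T(Y))=\det_{n\,k}(Y)$ for every $Y$, forcing $v\in W_{n\,k}$; thus $\Ker(\mathcal W\circ T)\subseteq W_{n\,k}$, and as $\dim\Ker(\mathcal W\circ T)\ge nk-(n-1)k=k=\dim W_{n\,k}$ they coincide, so $\mathcal W\circ T$ annihilates $W_{n\,k}$. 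Next choose $A\in\M_{n\,n}(\F)$ with $\mathcal W(AZ)=\widetilde A\,\mathcal W(Z)$ for all $Z$; such $A$ exists because $\mathcal W$ is the quotient by $W_{n\,k}$, so it suffices that $A$ carry $W_{n\,k}$ into itself and induce $\widetilde A$ on $\M_{n\,k}(\F)/W_{n\,k}$ (concretely, when $\mathcal W$ is the row-difference map $X\mapsto(x_i-x_n)_{i<n}$ one may take $A$ block upper-triangular with diagonal blocks $\widetilde A$ and $(c)$, $c\in\F$ arbitrary, and last column fixed by requiring all row sums equal). Since $\mathcal W$ commutes with right multiplication by $B$, $\mathcal W(AXB)=\widetilde A\,\mathcal W(X)B$, whence $\det_{n\,k}(AXB)=\det_{n-1\,k}(\widetilde A\,\mathcal W(X)B)=\det_{n-1\,k}(\mathcal W(X))=\det_{n\,k}(X)$ for all $X$; by the Cauchy--Binet comparison of the first paragraph, $A,B$ then satisfy the required minor identity. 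Finally, writing $X=s(\mathcal W X)+w$ with $w\in W_{n\,k}$ and applying $\mathcal W\circ T$, the term $\mathcal W(T(w))$ vanishes, so $\mathcal W(T(X))=\widetilde T(\mathcal W X)=\widetilde A\,\mathcal W(X)B=\mathcal W(AXB)$; hence $\phi\colon X\mapsto T(X)-AXB$ is a linear map into $\Ker\mathcal W=W_{n\,k}$ with $T(X)=AXB+\phi(X)$, as required.

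I expect the real weight of the argument to sit in Lemma~\ref{lem:FromNKOddToNKEvenIfKerW} --- establishing the factorization $\det_{n\,k}=\det_{n-1\,k}\circ\mathcal W$ with its sign bookkeeping, and the hypotheses under which part~\ref{lem:FromNKOddToNKEvenIfKerW:part6} delivers the reduction; granting that, the steps above are routine linear algebra over Theorem~\ref{MainTheoremEvenIntro}. The only non-formal moves in the lift are the identity $\Ker(\mathcal W\circ T)=W_{n\,k}$ (which is what pins $\phi$ inside the radical and makes the decomposition possible) and reading off the minor identity for the enlarged matrix $A$ from the global relation $\det_{n\,k}(AXB)=\det_{n\,k}(X)$ rather than by computing the $n\times k$ minors of $A$ one index set at a time.
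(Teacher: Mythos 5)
Your proposal is correct and follows essentially the same route as the paper: quotient by the radical $W_{n\,k}$ via the row-difference map (the paper's $L^{-}$, with section $L^{+}$), reduce to a $\det_{(n-1)\,k}$-preserver, apply the even-case theorem, and lift $\widetilde A$ back to an $n\times n$ matrix $A$ with $M^{-}A=\widetilde A M^{-}$ (your block-triangular $A$ with $c=0$ is exactly the paper's $M^{+}\widetilde A M^{-}$). The only cosmetic differences are that the paper first normalizes $T$ by subtracting an explicit $\phi$ so that $\Img(\widetilde T)=\M^{0}_{n\,k}(\F)$, and cites Waterhouse's invariance of the radical where you reprove $T(W_{n\,k})\subseteq W_{n\,k}$ by a kernel-dimension count.
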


Our proof of the main theorem relies on the notion of the radical of a function which was introduced and studied by Waterhouse~\cite{Waterhouse1983} (Definition~\ref{def:radical} below). The reduction of the case $(n,k)$ with $k \ge 4, n \ge k + 2$ and $n + k$ odd to the case $(n-1,k)$ with $k \ge 4, n \ge k + 2$ and $(n-1) + k$ even is done by finding the radical of $\det_{n\,k}$ explicitly and using its properties.

Thus, the results of the current paper and its predecessor~\cite{Guterman2025} provide the solution to linear preserver problem for the Cullis' determinant if $k \ge 4, n \ge k + 2$ and the ground field is large enough. The cases $k = 1$ and $k = 2$ are considered in~\cite{Guterman2025} as well. The case $n = k + 1$ is considered in~\cite{Guterman2024}, where the first attempt to find the description of linear maps preserving the Cullis' determinant is made. The remaining case $k = 3$ need a special approach because the proof for the case $k \ge 4$ relies on the following lemma and will be discussed separately. We remark that in this paper we apply the results obtained in~\cite{Guterman2025} and we do not use any results from~\cite{Guterman2024}.

\begin{lemma}[{\cite[Lemma~5.13]{Guterman2025}}]\label{lem:CullisDeg1Rank1}Assume that $|\F| > k \ge 4$, $n \ge k + 2$ and $n + k$ is even. Let  $T\colon \M_{n\, k} (\F) \to \M_{n\, k} (\F)$ be a linear map such that  $\det_{n\, k} (T(X)) = \det_{n\,k}(X)$ for all $X \in \M_{n\, k} (\F).$ Then $\rk (X) = 1$ implies $\rk (T(X)) = 1.$
\end{lemma}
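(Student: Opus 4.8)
The plan is to characterise the rank-one matrices intrinsically, by the order of vanishing of $\det_{n\,k}$ along affine lines, and then to transport that characterisation through $T$. Write a matrix in column form $X=[x_1\mid\cdots\mid x_k]$ with $x_j\in\F^n$. Since $\det_{n\,k}$ is a signed sum of $k\times k$ minors and each such minor is multilinear and alternating in its columns, $\det_{n\,k}$ is multilinear and alternating in $x_1,\dots,x_k$; let $\omega$ be the associated alternating $k$-form on $\F^n$, so that $\det_{n\,k}(X)=\omega(x_1,\dots,x_k)$ and $\omega=\sum_{1\le i_1<\cdots<i_k\le n}(-1)^{i_1+\cdots+i_k-1-\cdots-k}\,e_{i_1}^{\ast}\wedge\cdots\wedge e_{i_k}^{\ast}$, a form all of whose coefficients in the standard basis are $\pm1$. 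Expanding each column of $X+tY$ gives the exact polynomial identity
\[
\det_{n\,k}(X+tY)=\sum_{m=0}^{k}t^{m}\sum_{\substack{S\subseteq\{1,\dots,k\}\\ |S|=m}}\omega\bigl(z^{S}_{1},\dots,z^{S}_{k}\bigr),\qquad z^{S}_{j}=y_{j}\ (j\in S),\quad z^{S}_{j}=x_{j}\ (j\notin S).
\]
I would first check that $T$ is bijective: if $Z\in\Ker T$ then $\det_{n\,k}(X+Z)=\det_{n\,k}(T(X+Z))=\det_{n\,k}(TX)=\det_{n\,k}(X)$ for all $X$, and since $|\F|>k=\deg\det_{n\,k}$ this identity of polynomial functions is an identity of polynomials, so $Z$ lies in the radical of $\det_{n\,k}$ in the sense of Waterhouse, which for $n+k$ even is trivial; hence $T$ is injective, and therefore bijective.

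Next I would establish the key claim: $\rk X\le1$ if and only if, for every $Y$, the polynomial $t\mapsto\det_{n\,k}(X+tY)$ vanishes to order at least $k-1$ at $t=0$. If $\rk X\le1$, every $x_j$ is a scalar multiple of one fixed vector, so whenever $|S|\le k-2$ the tuple $z^{S}$ has at least two linearly dependent entries (those in positions $j\notin S$) and the alternating form kills it; hence the coefficients of $t^0,\dots,t^{k-2}$ all vanish. Conversely, if $\rk X\ge2$ then two columns of $X$, say $x_a$ and $x_b$, are linearly independent; choosing $Y$ with $y_a=y_b=0$ and the remaining columns generic, the only summand surviving in the coefficient of $t^{k-2}$ is the one with $S=\{1,\dots,k\}\setminus\{a,b\}$, and it equals, up to sign, the value of the $(k-2)$-form $\iota_{x_b}\iota_{x_a}\omega$ on those remaining columns; as this form is nonzero, some such $Y$ makes the coefficient of $t^{k-2}$ nonzero, so the order of vanishing is at most $k-2<k-1$.

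It remains to combine these. For all $X,Y,t$ one has $\det_{n\,k}(TX+t\,TY)=\det_{n\,k}(T(X+tY))=\det_{n\,k}(X+tY)$, so (again using $|\F|>k$) the polynomials $t\mapsto\det_{n\,k}(TX+t\,TY)$ and $t\mapsto\det_{n\,k}(X+tY)$ coincide and in particular have the same order of vanishing at $t=0$; since $T$ is onto, $TY$ ranges over all of $\M_{n\,k}(\F)$ as $Y$ does, so $X$ satisfies the criterion of the key claim if and only if $TX$ does, that is, $\rk X\le1\iff\rk TX\le1$. If $\rk X=1$, then $\rk TX\le1$, while $TX\ne0$ by injectivity of $T$, hence $\rk TX=1$, as required.

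The load-bearing point, and the step I expect to be the main obstacle, is the nondegeneracy of $\omega$ used twice above: its radical is trivial, and $\iota_{x_b}\iota_{x_a}\omega\ne0$ for every linearly independent pair $x_a,x_b$ (equivalently, the radical of $\iota_{x_a}\omega$ is exactly $\F x_a$). These are concrete statements about the explicit form $\omega_{n\,k}$ with $\pm1$ coefficients, and this is precisely where the parity hypothesis intervenes: for $n+k$ odd the analogous contraction along the all-ones vector vanishes, which is exactly the source of the singular preservers that are absent in the even case. I expect these nondegeneracy properties of $\omega_{n\,k}$ for $n\ge k+2$ to be the genuine content and to be available from the surrounding development in \cite{Guterman2025}; granting them, the argument above is routine.
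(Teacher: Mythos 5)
Your overall strategy --- characterise $\rk X\le 1$ by the order of vanishing of $t\mapsto\det_{n\,k}(X+tY)$ at $t=0$ for all $Y$, transport this characterisation through $T$ using $|\F|>k$ and Lemma~\ref{DegFGLessEqual}, and finish with injectivity of $T$ --- is the right one, and it is visibly the route the source takes: the whole point of Example~\ref{cex:K3NKEvenInt} in the surrounding discussion is to exhibit a rank-two matrix for which this degree criterion fails when $k=3$. The transport step, the easy direction of the criterion, and the final bookkeeping are all correct as written.

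There is, however, a genuine gap at the one place where the hypothesis $k\ge4$ must be used. Your converse direction rests entirely on the claim that $\iota_{x_b}\iota_{x_a}\omega_{n\,k}\neq0$ for every linearly independent pair $x_a,x_b\in\F^n$, and you do not prove it; you only remark that you ``expect'' it to be available. That claim is precisely the mathematical content of the lemma: it is false for $k=3$ (Example~\ref{cex:K3NKEvenInt} gives $u=e_1-e_5$, $v=e_2-e_4$ with $\iota_v\iota_u\omega_{n\,3}=0$, which is exactly why the lemma excludes $k=3$), so it cannot follow from soft general facts about alternating forms and needs a real argument exploiting $k\ge4$, $n\ge k+2$ and the explicit $\pm1$ coefficients of $\omega_{n\,k}$. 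A similar, though less severe, remark applies to the triviality of $\rad(\det_{n\,k})$ for $n+k$ even, which you assert in order to get injectivity of $T$: it is the even-parity counterpart of Lemma~\ref{lem:NKOddRadDetNK} and is established in~\cite{Guterman2025}, but it is not free (compare the odd case, where $W_{n\,k}$ does lie in the radical, so an explicit computation is needed to exclude nonzero radical elements when $n+k$ is even). As it stands, your text is a correct reduction of the lemma to these two nondegeneracy statements rather than a proof of it; the hard one, the non-vanishing of the double contraction for $k\ge4$, is exactly what the cited Lemma~5.13 of~\cite{Guterman2025} and its supporting lemmas supply.
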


The example below shows that this lemma does not hold for $k = 3$.

\begin{example}[{\cite[Example~5.11]{Guterman2025}}]\label{cex:K3NKEvenInt}Let $B = \begin{psmallmatrix}
                                 1 & 0 & 0 & 0 & -1 & 0 & \cdots & 0\\
                                 0 & 1 & 0 & -1 & 0 & 0 & \cdots & 0\\
                                 0 & 0 & 0 & 0 & 0 & 0 & \cdots & 0
                                        \end{psmallmatrix}^t \in \M_{n\,3}(\F).$ Then $\rk (B) = 2$ and $\deg_{\lambda} (\det_{n\,k}(A + \lambda B)) \le 1$ for all $A \in \M_{n\,3}(\F)$.
\end{example}

Nevertheless, if $k = 3$, then linear maps preserving the Cullis determinant admit the same description as in the case when $k \ge 4$ and will be studied separately.\bigskip

The paper is organized as follows: in Section~\ref{sec:prelim} we provide the basic facts regarding $\det_{n\,k}$; in Section~\ref{sec:kernels} we find the radical of $\det_{n\,k}$ if $n + k$ is odd; in Section~\ref{sec:maintheorem} we reduce the linear preserver problem for $\det_{n\,k}$ and $n + k$ is odd to the linear preserver problem for $\det_{(n-1)\,k}$ and prove the main characterization theorem for $n + k$ odd.

\section{Preliminaries}\label{sec:prelim}

Let us provide the basic notation and definitions used throughout this paper.%\bigskip %% in order to formulate the Main Theorem.

By $\F$ we denote a field without any restrictions on its characteristic. We will use an inequality $|\F| > k$ for $k \in \mathbb N$ which means that either $\F$ is infinite or $\F$ is finite and $|\F| > k$.

We denote by $\M_{n\, k}(\F)$ the set of all $n\times k$ matrices with the entries from a certain field $\F.$ If $X \in \M_{n\,k}(\F)$ then by $x_{i\, j}$ we denote an entry of $X$ lying on the intersection of the $i$-th row and the $j$-th column of $X$.

$O_{n\, k}\in \M_{n\, k}(\F)$ denotes the matrix with all the entries equal to zero. $I_{n\, n} = I_{n} \in \M_{n\, n}(\F)$ denotes an identity matrix. Let us denote by $E_{ij} \in \M_{n\, k}(\mathbb F)$ a matrix, whose entries are all equal to zero besides the entry on the intersection of the $i$-th row and the $j$-th column, which is equal to one. By $x_{i\, j}$ we denote the element of a matrix $X$ lying on the intersection of its $i$-th row and $j$-th column.  For integers $i, j$ we denote by $\delta_{i\,j}$  a \emph{Kronecker delta} of $i$ and $j$, which is equal to $1$ if $i = j$ and equal to $0$ otherwise.

For $A \in \M_{n\,k_1}(\F)$ and $B \in \M_{n\,k_2}(\F)$ by $A|B \in \M_{n\,k_1+k_2}(\F)$ we denote a block matrix defined by $A|B = \begin{pmatrix} A & B\end{pmatrix}$.

For $A \in \M_{n\,k}$ by $A^t \in \M_{k\,n}(\F)$ we denote a transpose of the matrix $A$, i.e. $A^{t}_{i\,j} = A_{j\,i}$ for all $1 \le i \le k$, $1 \le j \le n$.

We use the notation for submatrices following~\cite{Minc1984} and~\cite{Pierce1979}. That is, by $A[J_1|J_2]$ we denote the $|J_1 |\times |J_2|$ submatrix of $A$ lying on the intersection of rows with
the indices from $J_1$ and the columns with the indices from $J_2$. By $A(J_1|J_2)$ we denote a submatrix of $A$ derived from it by striking out the rows with indices belonging to $J_1$ and the columns with the indices belonging to $J_2$. If one of the two index sets is absent, then it means an empty set, i.e. $A(J_1|)$ denotes a matrix derived from $A$ by striking out from it the rows with indices belonging to $J_1$. We may skip curly brackets, i.e. $A[1,2|3,4] = A[\{1,2\}|\{3,4\}]$. The notation with mixed brackets is also used, i.e. $A(|1]$ denotes the first column of the matrix $A$. The above notations are also used for vectors as well. In this case vectors are considered as $n\times 1$ or $1 \times n$ matrices.

To avoid the ambiguity, we would like to clarify that the symbol $\circ$ is used  only in the sense of composition of two maps, that is, if $f \colon A \to B$ and $g \colon B \to C$, then by $g\circ f\colon A \to C$ we denote a composition of $f$ and $g$ defined by $(g\circ f)(x) = g(f(x))$ for all $x \in A$. 

\begin{definition}
By $[n]$ we denote the set $\{1, \ldots, n\}$.
\end{definition}
\begin{definition}
 By $\mathcal C_{X}^{k}$ we denote the set of injections from $[k]$ to $X$.
\end{definition}
\begin{definition}By $\binom{X}{k}$ we denote the set of the images of injections from $[k]$ to $X$, i.e. the set of all subsets of $X$ of cardinality $k.$
\end{definition}

\begin{definition}Suppose that $c \in \binom{[n]}{k}$ equal to $\{i_1,\ldots, i_k\}$, where $i_1 < i_2 < \ldots < i_k$, and $1 \le \alpha \le k$ is a natural number. Then $c(\alpha)$ is defined by
$$c(\alpha) = i_{\alpha}.$$
\end{definition}

\begin{definition}Given a set $c \in \binom{[n]}{k}$ we denote by $\sgn (c) = \sgn_{[n]} (c)$ the number
$$(-1)^{\sum_{\alpha = 1}^{k} (c(\alpha) - \alpha)}.$$
\end{definition}

\begin{note}$\sgn_{[n]}(c)$ depends only on $c$ and does not depend on $n.$
\end{note}

\begin{definition}Given an injection $\sigma \in \mathcal C_{[n]}^{k}$ we denote by $\sgn_{n\,k} (\sigma)$ the product $\sgn(\pi_\sigma) \cdot \sgn_{[n]} (c),$
where $\sgn(\pi)$ is the sign of the permutation
$$
\pi_\sigma =
\begin{pmatrix}
i_1 & \ldots & i_k\\
\sigma(1) & \ldots & \sigma(k)
\end{pmatrix},
$$
where $\{i_1, \ldots, i_k\} = \sigma([k])$ and $i_1<i_2<\ldots <i_k$.
\end{definition}

We omit the subscripts for $\sgn_{[n]}$ and $\sgn_{n\,k}$ if this cannot lead to a misunderstanding.%\bigskip
%\begin{underconst}
%Now we can provide the definition of Cullis determinant following~\cite{NAKAGAMI2007422}.
%\end{underconst}
\begin{definition}[\cite{NAKAGAMI2007422}, Theorem 13]\label{def:CullisDet} Let  $n \ge k$, $X \in \M_{n\,k} (\mathbb F)$. Then Cullis' determinant $\det_{n\, k}(X)$ of $X$ is defined to be the function:
$$
\det_{n\, k} (X) = \sum_{\sigma \in \mathcal C_{k}^{[n]}} \sgn_{n\,k} (\sigma) x_{\sigma(1)\, 1}x_{\sigma(2)\, 2}\ldots x_{\sigma(k)\, k}.
$$
We also denote $\det_{n\, k} (X)$ as follows
\[
\det_{n\,k}(X)=\begin{vmatrix}x_{1\,1} & \cdots & x_{1\,k}\\
\vdots & \cdots & \vdots\\
x_{n\,1} & \cdots & x_{n\,k}
\end{vmatrix}_{n\,k}.
\]
If $n = k$, then we also write $\det_{k}$ or $\det$ instead of $\det_{n\,k}$ because in this case $\det_{n\,k}$ is clearly equal to an ordinary determinant of a square matrix.
\end{definition}

Let us first list the properties of $\det_{n\, k}$ which are similar to corresponding properties of the ordinary determinant (see \cite[\textsection 5, \textsection 27, \textsection 32]{cullis1913} or~\cite{NAKAGAMI2007422} for detailed proofs).

\begin{theorem}[{\cite[Theorem 13, Theorem 16]{NAKAGAMI2007422}}]
\begin{enumerate}
\item[]
\item For $X \in \M_{n}(\mathbb F),$ $\det_{n\,n}(X) = \det (X).$
\item For $X \in \M_{n\,k}(\mathbb F),$ $\det_{n\,k}(X)$ is a linear function of columns of $X$.
\item If a matrix $X \in \M_{n\,k}(\mathbb F)$ has two identical columns or one of its columns is a linear combination of other columns, then $\det_{n\,k}(X)$ is equal to zero.
\item For $X \in \M_{n\,k}(\mathbb F),$ interchanging any two columns of $X$ changes the sign of $\det_{n\,k}(X)$.
\item Adding a linear combination of columns of $X$ to another column of $X$ does not change $\det_{n\,k}(X)$.
\item For $X \in \M_{n\,k}(\mathbb F),$ $\det_{n\,k}(X)$ can be calculated using the Laplace expansion along a column of $X$ (see Lemma~\ref{lem:DetNKLaplaceExp} for precise formulation).
\end{enumerate}
\end{theorem}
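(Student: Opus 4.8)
The plan is to reduce every assertion to the corresponding classical fact about the ordinary $k \times k$ determinant by first establishing a single structural identity: the Cullis' determinant is a signed sum of the maximal ordinary minors obtained by choosing $k$ rows. Concretely, I would prove that
\[
\det_{n\,k}(X) = \sum_{c \in \binom{[n]}{k}} \sgn_{[n]}(c)\, \det\bigl(X[c\,|\,[k]]\bigr),
\]
where $X[c\,|\,[k]]$ is the $k \times k$ submatrix of $X$ on the rows indexed by $c$. To obtain this, I would group the injections $\sigma \in \mathcal{C}^{k}_{[n]}$ in the defining sum according to their image $c = \sigma([k])$. For a fixed $c = \{i_1 < \cdots < i_k\}$, writing $\sigma(\alpha) = i_{\tau(\alpha)}$ sets up a bijection between injections with image $c$ and permutations $\tau \in S_k$, under which $\sgn(\pi_\sigma) = \sgn(\tau)$ and the monomial $x_{\sigma(1)\,1}\cdots x_{\sigma(k)\,k}$ becomes $\prod_{j} x_{i_{\tau(j)}\,j}$. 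Summing over $\tau$ is then exactly the Leibniz formula for $\det\bigl(X[c\,|\,[k]]\bigr)$, and factoring out the image-only factor $\sgn_{[n]}(c)$ yields the identity.

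Once this decomposition is in hand, properties (1)--(5) follow termwise, because the coefficients $\sgn_{[n]}(c)$ depend only on the row set $c$ and not on the entries of $X$, while each term is an honest $k \times k$ determinant sharing the \emph{same} column indexing. For (1), when $n = k$ the only row set is $c = [k]$ with $\sgn_{[k]}([k]) = 1$, so the sum collapses to $\det(X)$. For (2), each $\det\bigl(X[c\,|\,[k]]\bigr)$ is multilinear in the columns of $X$ (the $j$-th column of every minor is the restriction of the $j$-th column of $X$ to the rows of $c$), hence so is their linear combination with the constant coefficients $\sgn_{[n]}(c)$. Property (3) holds because a repeated column or a column dependence of $X$ is inherited by every submatrix $X[c\,|\,[k]]$, making each minor vanish. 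Property (4) follows since interchanging two columns of $X$ interchanges the same two columns in every minor, multiplying each $\det\bigl(X[c\,|\,[k]]\bigr)$ by $-1$ without touching $\sgn_{[n]}(c)$. Finally (5) is the standard consequence of (2) and (3): by multilinearity the determinant splits into the original value plus a multiple of a determinant with a repeated column, which vanishes.

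The only assertion that is not purely termwise is the Laplace expansion (6): here I would expand each ordinary minor $\det\bigl(X[c\,|\,[k]]\bigr)$ along the chosen column $j$ and then re-collect the resulting products across all $c$, matching them against the cofactors appearing in the precise statement of Lemma~\ref{lem:DetNKLaplaceExp}. I expect this sign-bookkeeping to be the main obstacle, since passing from a sum indexed by $k$-subsets of $[n]$ to one indexed by a single row choice for column $j$ together with a $(k-1)$-subset of the remaining rows requires reconciling $\sgn_{[n]}(c)$ with the product of a local cofactor sign and the sign of the smaller row set. The multilinearity and alternation already proved, however, reduce this step to a purely combinatorial identity among the signs $\sgn_{[n]}(\cdot)$, rather than any further computation with the entries of $X$.
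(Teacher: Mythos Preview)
The paper does not prove this theorem at all: it is stated as a citation of \cite[Theorem~13, Theorem~16]{NAKAGAMI2007422} (with a pointer to \cite{cullis1913} for detailed proofs), and no proof environment follows it in the paper. So there is no ``paper's own proof'' to compare against.

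That said, your proposal is correct and is essentially the standard route. The structural identity you set out to establish,
\[
\det_{n\,k}(X) = \sum_{c \in \binom{[n]}{k}} \sgn_{[n]}(c)\, \det\bigl(X[c\mid )\bigr),
\]
is in fact exactly the content of \cite[Theorem~13]{NAKAGAMI2007422}, which the paper cites as the very \emph{definition} of $\det_{n\,k}$ (Definition~\ref{def:CullisDet}); your grouping-by-image argument recovers it cleanly from the injection-sum form. Items (1)--(5) then do follow termwise as you say. For (6), your plan is also sound: the sign identity you need is that for $c \ni i$ with $i = c(\alpha)$ and $c' = c\setminus\{i\}$ (reindexed inside $[n]\setminus\{i\}\cong[n-1]$), one has $\sgn_{[n]}(c)\,(-1)^{\alpha} = (-1)^{i}\,\sgn_{[n-1]}(c')$, which is a short direct check from the definition of $\sgn_{[n]}$. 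With that in hand, expanding each $k\times k$ minor along column $j$ and resumming over the pair $(i,c')$ yields precisely the formula of Lemma~\ref{lem:DetNKLaplaceExp}.
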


\begin{corollary}\label{cor:CullisBinomialExpansion}Let $n \ge k$, $A, B \in \M_{n\,k} (\F).$% Consider $P \in \F[\lambda]$ defined by an equality $P(\lambda) = \det_{n\, k} (A + \lambda B)$.
Then
\begin{multline}\label{eq:CullisBinomialExpansion}
\det_{n\,k} (A + \lambda B)\\
= \sum_{d = 0}^{k}\lambda^d \left( \sum_{1 \le i_1 < \ldots < i_d \le k} \det_{n\,k}\Bigl(A(|1]\Big|\ldots \Big| B(|i_1] \Big| \ldots \Big| B(|i_d] \Big| \ldots \Big| A(|k] \Bigr)\right),
\end{multline}
where both sides of the equality are considered as formal polynomials in $\lambda$, i.e. as elements of $\F[\lambda]$.
\end{corollary}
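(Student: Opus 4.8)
The plan is to obtain this identity purely from the fact that $\det_{n\,k}$ is a multilinear function of the columns (item~(2) of the preceding theorem), applied $k$ times, one column at a time. The first point to record is that the statement is about formal polynomials, i.e. an equality in $\F[\lambda]$, not merely an equality of polynomial functions of $\lambda$ (which would be weaker when $\F$ is small). This is not an issue, because $\det_{n\,k}$ is given by the explicit polynomial formula of Definition~\ref{def:CullisDet} in the matrix entries, hence is defined for matrices over any commutative ring, in particular over $\F[\lambda]$; and the column-linearity of item~(2), being a polynomial identity in the entries, continues to hold verbatim over $\F[\lambda]$. So I would begin by noting that the left-hand side is literally $\det_{n\,k}$ of the matrix over $\F[\lambda]$ whose $j$-th column is $A(|j] + \lambda B(|j]$.

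Next I would expand one column at a time. Using linearity in the $j$-th column to split $A(|j] + \lambda B(|j]$ for $j = k, k-1, \ldots, 1$ in turn, and using that $\lambda$ is central in $\F[\lambda]$ (so the scalars $\lambda$ pulled out of the various columns multiply together regardless of the order in which the columns are processed), one gets by a straightforward induction on $k$
\[
\det_{n\,k}(A + \lambda B) = \sum_{S \subseteq [k]} \lambda^{|S|}\, \det_{n\,k}(C_S),
\]
where $C_S \in \M_{n\,k}(\F)$ denotes the matrix whose $j$-th column equals $B(|j]$ if $j \in S$ and $A(|j]$ otherwise; the base case $k = 1$ is exactly linearity in the single column.

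Finally I would group the $2^k$ summands by $d := |S|$. The subsets of $[k]$ of size $d$ correspond bijectively to increasing sequences $1 \le i_1 < \cdots < i_d \le k$, and for $S = \{i_1,\ldots,i_d\}$ the matrix $C_S$ is precisely $A(|1]\,|\,\cdots\,|\,B(|i_1]\,|\,\cdots\,|\,B(|i_d]\,|\,\cdots\,|\,A(|k])$ in the notation of the statement; collecting these under the common factor $\lambda^d$ yields exactly~\eqref{eq:CullisBinomialExpansion}. I do not expect any genuine obstacle in this argument; the only step that deserves an explicit word is the formal-polynomial remark in the first paragraph, everything else being a routine multilinear expansion.
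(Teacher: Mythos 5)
Your proposal is correct and follows the same route as the paper, which simply cites multilinearity of $\det_{n\,k}$ in the columns; you spell out the column-by-column expansion and the grouping by $|S|=d$, and your remark about the identity holding in $\F[\lambda]$ (not just as functions of $\lambda$) is a worthwhile clarification the paper leaves implicit.
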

\begin{proof}This is a direct consequence from the multilinearity of $\det_{n\,k}$ with respect to the columns of a matrix.
\end{proof}
\begin{corollary}\label{cor:DegDetABLEQK}If $A, B \in \M_{n\,k}(\F)$, then $\deg_\lambda\left(\det_{n\,k} (A + \lambda B)\right) \le k$.
\end{corollary}

\begin{lemma}[{\cite[Corollary~2.8]{Guterman2025}}]\label{lem:rightmatrixmult}If $X \in \M_{n\,k}(\F)$ and $Y \in \M_{k\,k}(\F)$, then
\[
\det_{n\,k}(XY) =  \det_{n\,k}(X)\det_k(Y).
\]
\end{lemma}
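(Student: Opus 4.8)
The plan is to exploit the multilinearity of $\det_{n\,k}$ in the columns, exactly as in the classical proof that the ordinary determinant is multiplicative under right multiplication. First I would observe that for each $j$ the $j$-th column of $XY$ equals the linear combination $\sum_{m=1}^{k} y_{m\,j}\, X(|m]$ of the columns of $X$. Substituting this expression into $\det_{n\,k}(XY)$ and applying column-multilinearity (property~2 of the theorem of Nakagami--Yanay above) once for each of the $k$ columns yields
\[
\det_{n\,k}(XY) = \sum_{(m_1,\ldots,m_k) \in [k]^k} y_{m_1\,1}\cdots y_{m_k\,k}\; \det_{n\,k}\Bigl(X(|m_1]\,\Big|\,\ldots\,\Big|\,X(|m_k]\Bigr).
\]

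Next I would discard the degenerate terms: by property~3, whenever two of the indices $m_i$ coincide the corresponding matrix has a repeated column and its Cullis' determinant vanishes. Since each $m_i$ ranges over $[k]$ and $Y$ is square, the surviving tuples $(m_1,\ldots,m_k)$ are precisely the images of permutations $\tau \in S_k$, so
\[
\det_{n\,k}(XY) = \sum_{\tau \in S_k} y_{\tau(1)\,1}\cdots y_{\tau(k)\,k}\; \det_{n\,k}\Bigl(X(|\tau(1)]\,\Big|\,\ldots\,\Big|\,X(|\tau(k)]\Bigr).
\]
Then I would invoke property~4 (interchanging two columns flips the sign of $\det_{n\,k}$) to reorder the columns of each surviving matrix back into the natural order $1,\ldots,k$; this reordering costs exactly a factor $\sgn(\tau)$, turning the bracketed determinant uniformly into $\sgn(\tau)\,\det_{n\,k}(X)$. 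Factoring $\det_{n\,k}(X)$ out of the sum gives
\[
\det_{n\,k}(XY) = \det_{n\,k}(X)\sum_{\tau \in S_k} \sgn(\tau)\, y_{\tau(1)\,1}\cdots y_{\tau(k)\,k} = \det_{n\,k}(X)\,\det_k(Y),
\]
the last equality being the Leibniz expansion of the ordinary $k\times k$ determinant, which by property~1 coincides with $\det_{k\,k}$.

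The argument is essentially mechanical, so I do not expect a genuine obstacle. The only point deserving a moment's care is step two, where one must be sure that having $Y$ square (of size $k\times k$) forces the non-degenerate multi-indices to be honest permutations of all of $[k]$ rather than injections into a larger set; this is precisely what makes the remaining sum the full determinant $\det_k(Y)$ and is the reason the identity holds for right multiplication by a square matrix but not, in this clean form, for rectangular factors. A secondary bookkeeping point is that the sign picked up from the column permutation in step three must match the sign occurring in the Leibniz formula for $\det_k(Y)$, which is immediate since both equal $\sgn(\tau)$.
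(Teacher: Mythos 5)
Your proof is correct: it is the standard multilinearity--alternation argument, using exactly the properties of $\det_{n\,k}$ (linearity in columns, vanishing on repeated columns, sign change under column swaps, and agreement with $\det_k$ in the square case) that are quoted in Section~\ref{sec:prelim}. The paper itself imports this lemma from \cite[Corollary~2.8]{Guterman2025} without reproving it, so there is no in-paper proof to contrast with; your argument is the expected one and needs no changes.
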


\begin{lemma}[{Cf.~\cite[Theorem~16]{NAKAGAMI2007422}}]\label{lem:DetNKLaplaceExp}
Let $1 < k \le n$. For any $n \times k$ matrix $X = (x_{i\,j})$ the expansion of $\det_{n\,k}(X)$ along the $j$-th column
is given by
\[
\det_{n\,k}(X) = \sum_{i = 1}^n (-1)^{i+j} x_{i\,j} \det_{(n-1)\,(k-1)}\Bigl(X(i|j)\Bigr).
\]
%where $X_{(i, j)}$ denotes the $(n-1) \times (k-1)$ matrix obtained from $Y$ by deleting the $i$-th row and the $j$-th column.
\end{lemma}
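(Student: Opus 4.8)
The statement to prove is the Laplace expansion for $\det_{n\,k}$ along the $j$-th column. My plan is to reduce it directly to the definition of $\det_{n\,k}$ by partitioning the sum over injections $\sigma \in \mathcal C_{k}^{[n]}$ according to the value $\sigma(j) = i$, and then matching the resulting inner sum with the definition of $\det_{(n-1)\,(k-1)}\bigl(X(i|j)\bigr)$ up to the sign factor $(-1)^{i+j}$. Concretely, I would fix $i\in[n]$ and $j\in[k]$, and for each injection $\tau$ from $[k]\setminus\{j\}$ into $[n]\setminus\{i\}$ there is a unique injection $\sigma\in\mathcal C_{k}^{[n]}$ with $\sigma(j)=i$ extending $\tau$; conversely every $\sigma$ with $\sigma(j)=i$ arises this way. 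Under this bijection the monomial $x_{\sigma(1)\,1}\cdots x_{\sigma(k)\,k}$ equals $x_{i\,j}$ times the monomial of $\tau$ in the matrix $X(i|j)$ (after relabelling the rows/columns of $X(i|j)$ by the order-preserving bijections $[n]\setminus\{i\}\to[n-1]$ and $[k]\setminus\{j\}\to[k-1]$). So the whole identity comes down to a sign bookkeeping lemma: $\sgn_{n\,k}(\sigma) = (-1)^{i+j}\,\sgn_{(n-1)\,(k-1)}(\bar\tau)$, where $\bar\tau$ is the relabelled version of $\tau$.

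The sign comparison is the only real content, and it splits into the two factors making up $\sgn_{n\,k}$: the column-set sign $\sgn_{[n]}(c)$ where $c=\sigma([k])$, and the permutation sign $\sgn(\pi_\sigma)$. For the set-sign factor, writing $c = \{\sigma(1),\dots,\sigma(k)\}$ and $c' = c\setminus\{i\} = \tau([k]\setminus\{j\})$, I would compute $\sgn_{[n]}(c)/\sgn_{[n-1]}(\bar c')$ by tracking how striking out the element $i$ (which sits in some position $p$ of the sorted list of $c$) shifts the summands $c(\alpha)-\alpha$: removing $i$ decreases every later index by $1$ and also removes the term $i - p$, and a short count shows this contributes $(-1)^{i-p}$. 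For the permutation factor, $\pi_\sigma$ sends the sorted list of $c$ to $(\sigma(1),\dots,\sigma(k))$; deleting the column $j$ and the row $i$ (which is the $p$-th smallest element of $c$) removes one entry from both the top and bottom rows of the two-line notation, and the standard fact about signs of such "restricted permutations" gives a factor $(-1)^{p+j}$. Multiplying, the $p$'s cancel and one is left with exactly $(-1)^{i+j}$, as required.

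The main obstacle I anticipate is purely the sign arithmetic — making the position variable $p$ appear with consistent conventions in both the set-sign and permutation-sign computations so that they cancel cleanly, and handling the edge cases ($i$ or $j$ at the ends of their ranges, and terms where $x_{i\,j}$ does not actually occur in any surviving monomial, which simply contribute zero on both sides). Since the paper cites this as essentially Theorem~16 of~\cite{NAKAGAMI2007422}, I would likely present the bijection and the factorization of the sign explicitly and then either invoke the cited reference for the detailed sign count or carry out the two short index-shifting computations, whichever the exposition of the surrounding section calls for. No deep idea is needed beyond the observation that $\det_{n\,k}$ is multilinear and alternating in columns (so the expansion must hold up to a scalar depending only on $i,j$), which already pins the coefficient down to $\pm1$ and reduces everything to identifying the sign on a single convenient monomial.
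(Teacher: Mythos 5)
The paper does not prove this lemma: it is imported from Nakagami--Yanai (their Theorem~16), and the surrounding text explicitly defers such proofs to the references, so there is no in-paper argument to compare against. Your plan is the standard direct proof from Definition~\ref{def:CullisDet} and it is correct. Partitioning the sum over $\sigma\in\mathcal C^{k}_{[n]}$ by $\sigma(j)=i$ gives the stated bijection with injections $\tau\colon[k]\setminus\{j\}\to[n]\setminus\{i\}$, and the sign bookkeeping closes exactly as you predict: if $i$ is the $p$-th smallest element of $c=\sigma([k])$, then $\sum_{\alpha}(c(\alpha)-\alpha)$ drops by precisely $i-p$ when $i$ is struck out and the later entries are shifted down, so $\sgn_{[n]}(c)=(-1)^{i-p}\sgn_{[n-1]}(\bar c')$, while deleting the entry sitting in position $p$ of the top row and position $j$ of the bottom row of the two-line notation multiplies the permutation sign by $(-1)^{p+j}$; the product is $(-1)^{i+j}$, independent of $p$. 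One caveat on your final aside: for $n>k$, multilinearity together with alternation in the columns does \emph{not} pin $\det_{n\,k}$ down up to a scalar (the space of alternating $k$-linear forms on $\F^{n}$ has dimension $\binom{n}{k}$), so that shortcut would not by itself reduce the identity to checking a single monomial; but your main argument does not rely on it.
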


\begin{lemma}[{Invariance of $\det_{n\,k}$ under cyclic shifts, Cf.~\cite[Theorem 3.5]{amiri2010}}]\label{lem:PermKNOddCyclic}If $k \le n$, and $k + n$ is odd, then for all $X = (x_{i\,j}) \in \M_{n\,k}(\F)$ and $i \in \{1, \ldots, n\}$ 
\[
(-1)^{(i+1)k}
\begin{vmatrix}
x_{i\,1} & \ldots & x_{i\,k}\\
\vdots & \vdots & \vdots\\
x_{n\,1} & \ldots & x_{n\,k}\\
x_{1\,1} & \ldots & x_{1\,k}\\
  \vdots & \vdots & \vdots\\
x_{i-1\,1} & \ldots & x_{i-1\,k}\\
\end{vmatrix}_{n\,k} =
\begin{vmatrix}
x_{1\,1} & \ldots & x_{1\,k}\\
  \vdots & \vdots & \vdots\\
x_{n\,1} & \ldots & x_{n\,k}\\
\end{vmatrix}_{n\,k}.
\]
Here the matrix on the left-hand side of the equality is obtained from $X$ by performing the row cyclical shift sending $i$-th row of $X$ to the first row of the result.
\end{lemma}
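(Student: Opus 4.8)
The plan is to reduce the general cyclic shift to repeated application of a single elementary cyclic shift and then to prove the single-shift case directly from the definition of $\det_{n\,k}$. First I would record the convenient expansion
\[
\det_{n\,k}(X) = \sum_{c \in \binom{[n]}{k}} \sgn_{[n]}(c)\,\det\bigl(X[c\,|\,[k]]\bigr),
\]
where $X[c\,|\,[k]]$ denotes the $k\times k$ submatrix on the rows indexed by $c$ taken in increasing order. This follows immediately from Definition~\ref{def:CullisDet} by grouping the injections $\sigma \in \mathcal C_{[n]}^{k}$ according to their image $c = \sigma([k])$: for a fixed $c$ the factor $\sgn_{[n]}(c)$ is constant, and summing $\sgn(\pi_\sigma)\prod_j x_{\sigma(j)\,j}$ over the injections with image $c$ reproduces the ordinary determinant $\det(X[c\,|\,[k]])$.

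Next I would isolate the single elementary shift $S$ that moves the first row of $X$ to the bottom, so that the matrix appearing in the lemma for a given $i$ is $S^{i-1}(X)$. It therefore suffices to prove $\det_{n\,k}(S(X)) = (-1)^k \det_{n\,k}(X)$; iterating yields $\det_{n\,k}(S^{i-1}(X)) = (-1)^{k(i-1)}\det_{n\,k}(X)$, and since $(-1)^{(i+1)k}(-1)^{k(i-1)} = (-1)^{2ik} = 1$ this is exactly the claimed identity (the case $i=1$ being trivial).

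To establish the single-shift identity I would apply the expansion above to $Y = S(X)$ and split the sum over $c$ into the subsets not containing $n$ and the subsets containing $n$. For $c \not\ni n$ the rows of $Y[c\,|\,[k]]$ are precisely the rows of $X$ indexed by $c+1$ in increasing order, whereas for $c \ni n$ the wrapped first row of $X$ occupies the last position and must be moved to the top, costing a factor $(-1)^{k-1}$. Reindexing by the corresponding old row set $d$ and using $\sgn_{[n]}(c) = (-1)^{\Sigma(c) - k(k+1)/2}$ with $\Sigma(c) = \sum_{x\in c} x$, I would compare $\sgn_{[n]}(c)$ with $\sgn_{[n]}(d)$ in each family: the coordinate shift $d = c+1$ contributes $(-1)^k$, while the wrap-around contributes $(-1)^{n-k}$. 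Collecting the signs, the reindexed family $d \not\ni 1$ carries the global factor $(-1)^k$ and the family $d \ni 1$ carries $(-1)^{n-k}(-1)^{k-1} = (-1)^{n-1}$.

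The main obstacle — and the only place the hypothesis enters — is matching these two global signs: the two families recombine into $\det_{n\,k}(X)$ precisely when $(-1)^k = (-1)^{n-1}$, i.e.\ when $n-k$ is odd, which is exactly the condition that $n+k$ be odd. Granting this parity coincidence, both families acquire the common factor $(-1)^k$ and sum back to $(-1)^k\det_{n\,k}(X)$, completing the single-shift case and hence, by the iteration above, the lemma.
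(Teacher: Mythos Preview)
Your argument is correct. The reduction to a single elementary shift is clean, the bijection on $k$-subsets (shift by one, with wrap-around when $n\in c$) is set up properly, and the sign bookkeeping is right: the family $d\not\ni 1$ acquires $(-1)^k$, the family $d\ni 1$ acquires $(-1)^{n-1}$, and these coincide precisely when $n+k$ is odd. The iteration and the final parity check $(-1)^{(i+1)k}(-1)^{(i-1)k}=1$ are also fine.

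There is nothing to compare against in the paper itself: the lemma is quoted from \cite[Theorem~3.5]{amiri2010} and carries no proof here. Your self-contained argument via the maximal-minor expansion $\det_{n\,k}(X)=\sum_{c}\sgn_{[n]}(c)\det(X[c\,|\,[k]])$ is a natural and efficient way to supply one; it has the advantage of making completely transparent where the parity hypothesis $n+k$ odd is used.
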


\begin{lemma}[{Cf.~\cite[Lemma 3.2]{amiri2010}}]\label{lem:AMIRILEMMA32} Let $n \ge k \ge 1$ and $x_{i\,j} \in \F$ for all $1 \le i \le n, 1 \le j \le k$. Then
\[
\begin{vmatrix}
x_{1\,1} & \ldots & x_{1\,k}\\
\vdots & \vdots & \vdots\\
x_{n\,1} & \ldots & x_{n\,k}\\
0 & \ldots & 0
\end{vmatrix}_{n+1\,k} = 
\begin{vmatrix}
x_{1\,1} & \ldots & x_{1\,k}\\
\vdots & \vdots & \vdots\\
x_{n\,1} & \ldots & x_{n\,k}
\end{vmatrix}_{n\,k}.
\]
\end{lemma}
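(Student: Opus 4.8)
The plan is to prove the identity directly from the definition of $\det_{n\,k}$ as a signed sum over injections, exploiting the fact that the newly adjoined bottom row is zero. Write $Y \in \M_{(n+1)\,k}(\F)$ for the matrix on the left-hand side, so that $y_{i\,j} = x_{i\,j}$ for $1 \le i \le n$ and $y_{(n+1)\,j} = 0$ for all $j$. By Definition~\ref{def:CullisDet},
\[
\det_{(n+1)\,k}(Y) = \sum_{\sigma \in \mathcal C_{[n+1]}^{k}} \sgn_{(n+1)\,k}(\sigma)\, y_{\sigma(1)\,1} \cdots y_{\sigma(k)\,k}.
\]
I would split this sum according to whether the index $n+1$ belongs to the image $\sigma([k])$.

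If $n+1 \in \sigma([k])$, then $\sigma(j) = n+1$ for some $j \in [k]$, so the corresponding product contains the factor $y_{(n+1)\,j} = 0$ and the entire term vanishes. Hence only those injections $\sigma$ with $\sigma([k]) \subseteq [n]$ survive, and these are exactly the elements of $\mathcal C_{[n]}^{k}$. For each such $\sigma$ the product $y_{\sigma(1)\,1}\cdots y_{\sigma(k)\,k}$ coincides with $x_{\sigma(1)\,1}\cdots x_{\sigma(k)\,k}$, since all indices $\sigma(\alpha)$ then lie in $[n]$.

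It remains to verify that the signs agree, i.e. that $\sgn_{(n+1)\,k}(\sigma) = \sgn_{n\,k}(\sigma)$ for every $\sigma \in \mathcal C_{[n]}^{k}$. Recall $\sgn_{n\,k}(\sigma) = \sgn(\pi_\sigma)\cdot \sgn_{[n]}(c)$ with $c = \sigma([k])$. The permutation $\pi_\sigma$ is determined solely by $\sigma$ together with the increasing ordering of its image, so $\sgn(\pi_\sigma)$ is insensitive to whether we regard $\sigma$ as a map into $[n]$ or into $[n+1]$. The factor $\sgn_{[n]}(c)$ likewise depends only on $c$, by the note that $\sgn_{[n]}(c)$ is independent of $n$; since $c \subseteq [n] \subseteq [n+1]$ we get $\sgn_{[n]}(c) = \sgn_{[n+1]}(c)$. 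Therefore the two signs coincide, the surviving summands of $\det_{(n+1)\,k}(Y)$ match those of $\det_{n\,k}(X)$ term by term, and the claimed equality follows.

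There is no serious obstacle here: the argument is essentially a bookkeeping exercise on the defining sum. The only point needing care is the sign compatibility, which reduces to the observation that both constituents $\sgn(\pi_\sigma)$ and $\sgn_{[n]}(c)$ of $\sgn_{n\,k}(\sigma)$ are unaffected by enlarging the ambient index set from $[n]$ to $[n+1]$.
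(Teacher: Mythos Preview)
Your proof is correct. The paper does not supply its own proof of this lemma; it is stated with a reference to \cite[Lemma~3.2]{amiri2010} and left unproved, so there is no in-paper argument to compare against. Your direct computation from Definition~\ref{def:CullisDet}, together with the paper's note that $\sgn_{[n]}(c)$ is independent of $n$, is exactly the natural verification and is complete as written.
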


\begin{lemma}[Cf.~{\cite[Lemma~20]{NAKAGAMI2007422}}]~\label{lem:NAKAGAMILEMMA20} Assume that $n > k \ge 1$. Let  $X \in \M_{n\,k}(\F)$ and $Y \in \M_{n\,(k+1)}$ is defined by $Y = X | \begin{psmallmatrix}1 \\ \vdots \\ 1\end{psmallmatrix}$. Then
\[
\det_{n\,(k+1)}(Y) = \begin{cases}
\det_{n\,k}(X), & \mbox{$n + k$ is odd},\\
0, & \mbox{$n + k$ is even}.
\end{cases}
\]
\end{lemma}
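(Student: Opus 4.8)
The plan is to induct on $k\ge 1$, and at each stage to apply the Laplace expansion of Lemma~\ref{lem:DetNKLaplaceExp} to $\det_{n\,(k+1)}(Y)$ along the \emph{first} column of $Y$ — that is, along a column inherited from $X$, not along the appended all-ones column $\mathbf 1$. The point of this choice is that striking the first column and the $i$-th row of $Y = X \,|\, \mathbf 1$ produces the $(n-1)\times k$ matrix $X(i|1)\,|\,\mathbf 1$, which is again of the shape treated by the lemma, now with parameters $(n-1,k-1)$; since $n>k$ we have $n-1>k-1$, so the induction hypothesis applies to it, and $n+k-2$ has the same parity as $n+k$.

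For the base case $k=1$ the matrix $Y$ is $n\times 2$; expanding along its first column and noting that deleting that column and any row leaves exactly the all-ones column of length $n-1$, one gets $\det_{n\,2}(Y)=\bigl(\sum_{i=1}^n(-1)^{i-1}x_{i\,1}\bigr)\cdot\det_{(n-1)\,1}(\mathbf 1)$. By the definition of $\det_{m\,1}$ the factor $\det_{(n-1)\,1}(\mathbf 1)=\sum_{r=1}^{n-1}(-1)^{r-1}$ equals $1$ when $n$ is even and $0$ when $n$ is odd, while $\sum_{i=1}^n(-1)^{i-1}x_{i\,1}=\det_{n\,1}(X)$; since $n+1$ is odd exactly when $n$ is even, this gives the claim for $k=1$.

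For the inductive step ($k\ge 2$), expand $\det_{n\,(k+1)}(Y)$ along the first column and insert $Y(i|1)=X(i|1)\,|\,\mathbf 1$. By the induction hypothesis each term $\det_{(n-1)\,k}\bigl(Y(i|1)\bigr)$ equals $\det_{(n-1)\,(k-1)}\bigl(X(i|1)\bigr)$ if $n+k$ is odd and vanishes if $n+k$ is even. In the even case the whole sum is $0$. In the odd case the sum becomes $\sum_{i=1}^n(-1)^{i+1}x_{i\,1}\det_{(n-1)\,(k-1)}\bigl(X(i|1)\bigr)$, which is exactly the Laplace expansion of $\det_{n\,k}(X)$ along its first column (Lemma~\ref{lem:DetNKLaplaceExp}, applicable since now $k>1$); hence $\det_{n\,(k+1)}(Y)=\det_{n\,k}(X)$.

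I do not expect a genuine obstacle: the argument is a two-line Laplace expansion at each step, and the only things to watch are the parity bookkeeping and the fact that the recursion lowers \emph{both} indices, so the induction must stop at $k=1$ (the case $k=0$ falls outside the hypotheses of the lemma). One could instead expand $\det_{n\,(k+1)}(Y)$ along the column $\mathbf 1$, obtaining the alternating sum $\sum_{i=1}^n(-1)^{i+k+1}\det_{(n-1)\,k}\bigl(X(i|)\bigr)$ of Cullis ``minors'' of $X$; but then one would still have to prove separately that this sum equals $\det_{n\,k}(X)$ or $0$ according to parity, whereas expanding along a genuine column of $X$ hands that identification to the usual Laplace expansion of $\det_{n\,k}$ for free.
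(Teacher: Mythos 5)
Your proof is correct. The paper itself does not prove this lemma --- it imports it from Nakagami--Yanai (the cited Lemma~20) --- so there is no in-text argument to compare against, but your induction on $k$ via the Laplace expansion along the first column is sound and self-contained: the base case $k=1$ correctly reduces to $\det_{(n-1)\,1}(\mathbf 1)=\sum_{r=1}^{n-1}(-1)^{r-1}$, whose vanishing/nonvanishing matches the parity of $n+1$, and in the inductive step the submatrix $Y(i|1)=X(i|1)\,|\,\mathbf 1$ has parameters $(n-1,k-1)$ satisfying $n-1>k-1\ge 1$ with $(n-1)+(k-1)\equiv n+k \pmod 2$, after which the surviving sum is exactly the first-column Laplace expansion of $\det_{n\,k}(X)$. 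All hypotheses of Lemma~\ref{lem:DetNKLaplaceExp} ($1<k\le n$) are met at each application, so no gaps remain.
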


\begin{lemma}[{Cf.~\cite[Theorem~3.3]{amiri2010}}]\label{lem:AdditionNPLusKOdd}
{Suppose $1 \le k < n,$ and $k + n$ be an odd integer, $A, X \in \M_{n\, k}(\F)$ and $X = \begin{psmallmatrix}
x_1 & \ldots & x_k\\
\vdots & \ddots & \vdots\\
x_1 & \ldots & x_k
\end{psmallmatrix}$ for some $x_1,\ldots x_k \in \F$. Then}
\[
\det_{n\,k} (A + X) = \det_{n\,k} (A).
\]
\end{lemma}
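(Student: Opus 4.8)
The plan is to reduce the statement to Lemma~\ref{lem:NAKAGAMILEMMA20} via the multilinear expansion of $\det_{n\,k}$ in Corollary~\ref{cor:CullisBinomialExpansion}. First I would observe that the matrix $X$, whose rows are all equal to $(x_1,\ldots,x_k)$, is precisely $\begin{psmallmatrix}1\\ \vdots\\ 1\end{psmallmatrix}(x_1\;\cdots\;x_k)$, a rank-$\le 1$ matrix; in particular each column $X(|j]$ equals $x_j\cdot\mathbf 1$, where $\mathbf 1=\begin{psmallmatrix}1\\ \vdots\\ 1\end{psmallmatrix}\in\M_{n\,1}(\F)$. Applying Corollary~\ref{cor:CullisBinomialExpansion} with the roles of $A$ and $X$ (and $\lambda=1$), we get
\[
\det_{n\,k}(A+X)=\sum_{d=0}^{k}\ \sum_{1\le i_1<\cdots<i_d\le k}\det_{n\,k}\Bigl(A(|1]\ \big|\ \cdots\ \big|\ X(|i_1]\ \big|\ \cdots\ \big|\ X(|i_d]\ \big|\ \cdots\ \big|\ A(|k]\Bigr).
\]
The $d=0$ term is exactly $\det_{n\,k}(A)$, so it suffices to show that every term with $d\ge 1$ vanishes.

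For a fixed term with $d\ge 1$, the matrix in question has at least one column equal to $x_{i_1}\mathbf 1$. If $x_{i_1}=0$ the term is zero by multilinearity (Theorem item 2), so assume $x_{i_1}\ne 0$; dividing that column by $x_{i_1}$, the term is a scalar multiple of a determinant of a matrix one of whose columns is $\mathbf 1$. If $d\ge 2$, two (or more) columns are proportional to $\mathbf 1$, hence linearly dependent, and the determinant is $0$ by Theorem item 3; so only the terms with exactly $d=1$ remain. For such a term, after permuting columns (which only changes the sign, Theorem item 4) so that the $\mathbf 1$-column sits last, we are looking at $\det_{n\,k}$ of a matrix of the form $\widetilde A\,\big|\,\mathbf 1$ with $\widetilde A\in\M_{n\,(k-1)}(\F)$. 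By Lemma~\ref{lem:NAKAGAMILEMMA20} applied with $k$ replaced by $k-1$, this equals $\det_{n\,(k-1)}(\widetilde A)$ when $n+(k-1)$ is odd and $0$ when $n+(k-1)$ is even; since $n+k$ is odd by hypothesis, $n+(k-1)$ is even, so every $d=1$ term is $0$ as well.

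The only mild subtlety — and the step I would be most careful about — is the bookkeeping of signs and the reduction to the ``$\mathbf 1$ in the last column'' form so that Lemma~\ref{lem:NAKAGAMILEMMA20} applies verbatim; but since all that is used is that a column permutation multiplies the determinant by $\pm 1$ and the target value is $0$ regardless of the sign, no sign needs to be tracked precisely. One should also check the hypothesis $1\le k<n$ of Lemma~\ref{lem:NAKAGAMILEMMA20} in the shifted form, i.e. $k-1<n$, which holds since $k<n$; the edge case $k=1$ (where $d\ge 1$ forces $d=1$ and $\widetilde A$ is the empty-column matrix) is handled the same way, with $\det_{n\,1}(\mathbf 1)=0$ following from Lemma~\ref{lem:NAKAGAMILEMMA20} with $k$ replaced by $0$ and $n+0=n$ odd. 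Collecting the surviving term $d=0$ gives $\det_{n\,k}(A+X)=\det_{n\,k}(A)$, as claimed.
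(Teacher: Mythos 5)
The paper itself gives no proof of this lemma: it is imported from the literature (the ``Cf.''\ citation to \cite[Theorem~3.3]{amiri2010}), so there is no in-paper argument to compare yours against. Your derivation from the paper's own toolkit is correct and self-contained: expanding $\det_{n\,k}(A+X)$ multilinearly in the columns (Corollary~\ref{cor:CullisBinomialExpansion} at $\lambda=1$), the $d\ge 2$ terms vanish because two columns proportional to $\mathbf 1$ are linearly dependent (or one of them is the zero column), and the $d=1$ terms vanish by Lemma~\ref{lem:NAKAGAMILEMMA20} because $n+(k-1)$ is even; the hypothesis $n>k-1\ge 1$ of that lemma is indeed available for $k\ge 2$ since $k<n$. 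The one slip is in your $k=1$ aside: there $n+k$ odd forces $n$ to be \emph{even}, not odd, and Lemma~\ref{lem:NAKAGAMILEMMA20} as stated does not cover $k=0$ in any case; but the fact you actually need, $\det_{n\,1}(\mathbf 1)=\sum_{i=1}^{n}(-1)^{i-1}=0$ for $n$ even, is immediate from Definition~\ref{def:CullisDet}, so the edge case goes through once that parenthetical is corrected.
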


\begin{corollary}\label{cor:AdditionLinearMapNPLusKOdd}Assume that $n + k$ is odd. Let 
$\phi \colon \M_{n\,k}(\F) \to W_{n\,k}$ and $T \colon \M_{n\,k}(\F) \to \M_{n\,k}(\F)$ be linear maps such that  $\det_{n\, k} (T(X)) = \det_{n\,k}(X)$ for all $X \in \M_{n\,k}(\F)$. Denote by $S$ a linear map $\M_{n}(\F) \to \M_{n\,k}(\F)$ defined by
\begin{equation}\label{eq:AddingKernel}
S(X) = T(X) + \phi (X)\;\;\mbox{for all}\;\;X \in \M_{n\,k}(\F).
\end{equation}
 Then
\[
\det_{n\, k} (S(X)) = \det_{n\,k}(X)\;\;\mbox{for all}\;\;X \in \M_{n\,k} (\F).
\]
\end{corollary}

\begin{lemma}[{\cite[Lemma~3.1]{Guterman2025}}]\label{lem:TwoSidedMulPreservesDet}Let $n \ge k$, $A \in \M_{n\,n}(\F)$, $B \in \M_{k\,k}(\F)$, and let $T\colon \M_{n\,k}(\F) \to \M_{n\,k}(\F)$ be a linear map defined by
\begin{equation}\label{eq:TwoSideMul}
T(X) = AXB
\end{equation}
for all $X \in \M_{n\,k}(\F).$ Then
\[
\det_{n\,k} (T(X)) = \det_{n\,k}(X)
\]
for all $X \in \M_{n\,k} (\F)$ if and only if
\begin{equation}\label{eq:TwoSideMul2}
\det_{n\,k} \Bigl(A(|d]\Bigr)\cdot \det_k \Bigl(B\Bigr) =  \sgn (d)
\end{equation}
for all $d \in \binom{[n]}{k}.$
\end{lemma}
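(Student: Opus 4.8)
The plan is to peel off $B$ using Lemma~\ref{lem:rightmatrixmult}, reduce the ``for all $X$'' statement to finitely many test matrices by multilinearity, and exploit the elementary identity $A\cdot I_n(|d]=A(|d]$ together with the fact that $\det_{n\,k}\bigl(I_n(|d]\bigr)=\sgn(d)$, which is read off directly from Definition~\ref{def:CullisDet}: the only injection contributing to the defining sum is $j\mapsto d(j)$, whose permutation part $\pi_\sigma$ is the identity, so that the single surviving term is $\sgn_{n\,k}(\sigma)=\sgn(d)$. Throughout write $X_d:=I_n(|d]\in\M_{n\,k}(\F)$ for $d\in\binom{[n]}{k}$; thus $X_d$ has a $1$ in each position $(d(\alpha),\alpha)$ and zeros elsewhere, and $AX_d=A(|d]$ because the $\alpha$-th column of $AX_d$ is $A$ applied to the $\alpha$-th column of $I_n(|d]$, i.e.\ the $d(\alpha)$-th column of $A$.

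For the ``only if'' direction, suppose $\det_{n\,k}(T(X))=\det_{n\,k}(X)$ for all $X$. Fix $d\in\binom{[n]}{k}$ and specialize $X=X_d$. Since $AX_d=A(|d]$, Lemma~\ref{lem:rightmatrixmult} gives
\[
\det_{n\,k}\bigl(T(X_d)\bigr)=\det_{n\,k}\bigl(A(|d]\,B\bigr)=\det_{n\,k}\bigl(A(|d]\bigr)\det_k(B),
\]
while $\det_{n\,k}(X_d)=\sgn(d)$; equating the two yields \eqref{eq:TwoSideMul2}. No hypothesis on $|\F|$ is needed here.

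For the ``if'' direction, assume \eqref{eq:TwoSideMul2}. By Lemma~\ref{lem:rightmatrixmult}, $\det_{n\,k}(AXB)=\det_{n\,k}(AX)\det_k(B)$, and since the $\alpha$-th column of $AX$ is $A$ applied to the $\alpha$-th column of $X$ only, both $X\mapsto\det_{n\,k}(AX)$ and hence $X\mapsto\det_{n\,k}(AXB)$ are multilinear in the columns of $X$, exactly as $X\mapsto\det_{n\,k}(X)$ is. It therefore suffices to check the equality $\det_{n\,k}(AXB)=\det_{n\,k}(X)$ on matrices $X=\bigl(e_{j_1}\mid\cdots\mid e_{j_k}\bigr)$ whose columns are standard basis vectors of $\F^n$. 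If the $j_\alpha$ are not pairwise distinct, then $X$ has two equal columns, so $\det_{n\,k}(X)=0$; also $\rk(AXB)\le\rk(X)<k$, so $AXB$ has linearly dependent columns and $\det_{n\,k}(AXB)=0$ as well. If the $j_\alpha$ are pairwise distinct, set $d=\{j_1,\dots,j_k\}$ and write $X=X_dP$ with $P\in\M_{k\,k}(\F)$ the permutation matrix effecting the reordering, associated with $\tau\in S_k$, so $\det_k(P)=\sgn(\tau)$. Then Lemma~\ref{lem:rightmatrixmult} and $AX_d=A(|d]$ give
\[
\det_{n\,k}(X)=\det_{n\,k}(X_d)\det_k(P)=\sgn(d)\sgn(\tau)
\]
and, using \eqref{eq:TwoSideMul2} in the last step,
\[
\det_{n\,k}(AXB)=\det_{n\,k}\bigl(A(|d]\bigr)\det_k(P)\det_k(B)=\sgn(d)\sgn(\tau).
\]
Hence $\det_{n\,k}(AXB)=\det_{n\,k}(X)$ for all $X$.

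The argument is almost entirely bookkeeping once Lemma~\ref{lem:rightmatrixmult} is in hand; the one step requiring a moment's care is the multilinearity reduction in the ``if'' direction — specifically, verifying that $X\mapsto\det_{n\,k}(AXB)$ depends multilinearly on the columns of $X$ (so that agreement on the basis tuples $X=(e_{j_1}\mid\cdots\mid e_{j_k})$ propagates to all $X$) and that the degenerate case of repeated columns forces both determinants to vanish, so that the verification genuinely reduces to the matrices $X_d$ up to column permutations.
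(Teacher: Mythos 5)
Your proof is correct. Note that the paper states this lemma only by citation to \cite[Lemma~3.1]{Guterman2025} and gives no proof of it here, so there is no in-paper argument to compare against; your write-up is a clean, self-contained derivation using only facts available in this paper (Lemma~\ref{lem:rightmatrixmult}, column-multilinearity and the vanishing on dependent columns, and the computation $\det_{n\,k}\bigl(I_n(|d]\bigr)=\sgn(d)$ straight from Definition~\ref{def:CullisDet}). The two points that needed care --- that $X\mapsto\det_{n\,k}(AXB)$ is column-multilinear via the factorization $\det_{n\,k}(AX)\det_k(B)$ rather than directly (since $B$ mixes the columns), and that repeated basis columns force both sides to vanish --- are both handled correctly, and you are right that no hypothesis on $|\F|$ is needed anywhere.
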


\begin{lemma}[{\cite[Theorem~5.14]{Guterman2025}}]\label{lem:MainTheoremEven}Assume that $|\F| > k \ge 4, n \ge k + 2$ and  $n + k$ is even. Let $T\colon \M_{n\, k} (\F) \to \M_{n\, k} (\F)$ be a linear map. Then $\det_{n\, k} (T(X)) = \det_{n\,k}(X)$ for all $X \in \M_{n\, k} (\F)$ if and only if there exist $A \in \M_{n\, n}(\F)$ and $B \in \M_{k\, k}(\F)$ such that
\begin{equation}\label{thm:MainTheoremEvenKGe4:eq}
\det_{n\, k} \Bigl(A(|i_1,\ldots, i_k]\Bigr) \cdot \det_k \Bigl(B\Bigr) = (-1)^{i_1 + \ldots + i_k - 1 - \ldots - k}
\end{equation}
for all increasing sequences $1 \le i_1 < \ldots < i_k \le n $ and
\[
T(X) = AXB
\]
for all $X \in \M_{n\, k} (\F).$
\end{lemma}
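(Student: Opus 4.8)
The statement is an equivalence whose forward (``if'') direction is essentially immediate. Writing $d=\{i_1<\cdots<i_k\}$, the definition of $\sgn$ gives $\sgn(d)=(-1)^{\sum_{\alpha=1}^{k}(c(\alpha)-\alpha)}=(-1)^{i_1+\cdots+i_k-1-\cdots-k}$, so the displayed hypothesis relating $A$ and $B$ is precisely condition~\eqref{eq:TwoSideMul2}. Hence if $A,B$ satisfy it and $T(X)=AXB$, then Lemma~\ref{lem:TwoSidedMulPreservesDet} yields $\det_{n\,k}(T(X))=\det_{n\,k}(X)$ for all $X$. The real content is therefore the converse, and the plan is to show that any $T$ preserving $\det_{n\,k}$ must have the form $X\mapsto AXB$, after which the same Lemma~\ref{lem:TwoSidedMulPreservesDet} delivers the required identity on $A$ and $B$.

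Assume now that $\det_{n\,k}(T(X))=\det_{n\,k}(X)$ for all $X$. First I would extract some non-degeneracy. Because $\det_{n\,k}$ vanishes on any matrix with linearly dependent columns, $\det_{n\,k}(Y)\neq 0$ forces $\rk(Y)=k$; consequently $T$ maps every full-column-rank matrix to a full-column-rank matrix, so its image is not confined to matrices of rank $<k$. Next I would show $T$ is injective: if $X_0\in\Ker T$, then $\det_{n\,k}(X+X_0)=\det_{n\,k}(T(X+X_0))=\det_{n\,k}(T(X))=\det_{n\,k}(X)$ for every $X$, so $X_0$ lies in the radical of $\det_{n\,k}$. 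Since this radical is trivial when $n+k$ is even (unlike the odd case computed in Section~\ref{sec:kernels}), we obtain $X_0=0$, and $T$ is a bijection.

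The core step uses rank-one matrices. By Lemma~\ref{lem:CullisDeg1Rank1} --- the place where the hypotheses $k\ge 4$, $n\ge k+2$, $n+k$ even and $|\F|>k$ are really spent --- $T$ carries every rank-one matrix to a rank-one matrix. A nonsingular linear operator on $\M_{n\,k}(\F)$ preserving the rank-one matrices is, by the classical classification of rank-one preservers, of one of the two forms $X\mapsto AXB$ or $X\mapsto AX^{t}B$ with $A,B$ invertible; the transpose form presupposes square matrices and is excluded here because $n\ge k+2$ forces $n\neq k$. Therefore $T(X)=AXB$ for fixed $A\in\M_{n\,n}(\F)$ and $B\in\M_{k\,k}(\F)$. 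Substituting this form into Lemma~\ref{lem:TwoSidedMulPreservesDet} and invoking preservation of $\det_{n\,k}$ gives condition~\eqref{eq:TwoSideMul2}, which is exactly the displayed identity, completing the converse.

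I expect the genuine obstacle to lie in Lemma~\ref{lem:CullisDeg1Rank1} itself: since $\det_{n\,k}$ is not invariant under left multiplication or row operations, the standard arguments that force a rank drop for the ordinary determinant are unavailable, and Example~\ref{cex:K3NKEvenInt} shows that rank-one preservation can fail for $k=3$, so the hypotheses $k\ge 4$ and $n\ge k+2$ are essential and cannot be circumvented cheaply. Granting that lemma, the remaining difficulty is only the clean exclusion of the transpose and degenerate possibilities in the rank-one classification, and here the bijectivity and full-rank facts drawn from det-preservation, together with the non-squareness $n\neq k$, suffice without any further delicate manipulation of $\det_{n\,k}$.
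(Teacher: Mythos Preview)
The statement is not proved in this paper at all: it is quoted from the predecessor paper \cite{Guterman2025} and listed in Section~\ref{sec:prelim} as a black-box lemma with citation only, to be consumed later in the proof of Theorem~\ref{thm:MainTheoremCullisNKOdd}. There is therefore no ``paper's own proof'' here to compare your attempt against.

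As a standalone sketch your argument is sound and almost certainly mirrors the strategy in \cite{Guterman2025}, since both of its load-bearing ingredients --- Lemma~\ref{lem:TwoSidedMulPreservesDet} for the ``if'' direction and the final identification of the condition on $A,B$, and Lemma~\ref{lem:CullisDeg1Rank1} for the passage to rank-one preservation --- are themselves imported from that source. Two points deserve a flag. First, your injectivity step rests on the assertion that $\rad(\det_{n\,k})=\{0\}$ when $n+k$ is even; this is not established anywhere in the present paper (Section~\ref{sec:kernels} treats only the odd case), so you are implicitly appealing to \cite{Guterman2025} once more. Second, the classification of bijective rank-one preservers on $\M_{n\,k}(\F)$ over an arbitrary (possibly finite) field holds in the form you use, but is not entirely classical and warrants a precise reference; your exclusion of the transpose branch via $n\neq k$ is correct.
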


The following general fact regarding polynomials over a field will be useful in the sequel. 
%\begin{lemma}[{Cf.~\cite[IV, \S 1, Corollary 1.8]{Lang2002}}]Let $\F$ be a finite field with $q$ elements. Let $f$ be a
%polynomial in $n$ variables over $\F$ such that the degree of $f$ in each variable
%is less than $q$. If $f$ defines the zero function on $\F^{n}$, then $f = 0$.
%\end{lemma}

\begin{lemma}[{\cite[Corollary~2.13]{Guterman2025}}]\label{DegFGLessEqual}Let $f, g$ be
polynomials in $n$ variables over $\F$ such that either $\F$ is infinite or the degree of $f$ and $g$ in each variable
is less than $|\F|$. If $f$ and $g$ define the same function on $\F^{n}$, then $f = g$.
\end{lemma}

In order to prove that every linear map preserving $\det_{n\,k}$ is invertible we use the notion and the properties of radical of a function on a vector space following~\cite{Waterhouse1983}.

\begin{definition}[Cf.~{\cite[text at the beginning of Section~1]{Waterhouse1983}}]\label{def:radical}Let $\F$ be a field, $V$ be a finite-dimensional vector space over $\F$, and let $f$ be a function from $V$ to $\F$. The \emph{radical} of $f$, denoted by $\rad (f)$ is a subset of $V$ defined by
\[
\rad (f) = \{\mathbf w \mid f(\mathbf v + \lambda \mathbf w) = f(\mathbf v)\;\;\mbox{for all}\;\; \mathbf v \in V,\;\; \lambda \in \F\}.
\]
\end{definition}

\begin{definition}[Cf.~{\cite[text at the beginning of Section~1]{Waterhouse1983}}]\label{def:frad}If $\rad(f)$ is the radical of $f$ and $\pi_{\rad}\colon V \to V / \rad(f)$ is the canonical projection, then $f_{\rad}$ defined as a unique function from $V / \rad(f)$  to $\F$ with trivial radical such that $f = f_{\rad}\circ \pi_{\rad}$.
\end{definition}

\begin{lemma}[Cf.~{\cite[Proposition~1]{Waterhouse1983}}]\label{lem:RadicalCriterion}
$\rad (f)$ is nonzero if and only if there is a noninvertible linear map $T\colon V\to V$ satisfying $f(T\mathbf v) = f(\mathbf v)$ for all $\mathbf v$ in $V$.
\end{lemma}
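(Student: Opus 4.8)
The plan is to prove both implications directly from Definition~\ref{def:radical} of the radical. For the forward direction, suppose $\rad(f)$ is nonzero, and fix a nonzero vector $\mathbf w \in \rad(f)$. I would define $T\colon V \to V$ to be any linear projection onto a complementary subspace of $\Span(\mathbf w)$; that is, pick a hyperplane $H$ with $V = H \oplus \Span(\mathbf w)$ and let $T$ be the projection onto $H$ along $\Span(\mathbf w)$. Then $T$ is noninvertible because $\mathbf w \in \Ker T$, and for any $\mathbf v \in V$ write $\mathbf v = \mathbf h + \lambda \mathbf w$ with $\mathbf h \in H$; then $T\mathbf v = \mathbf h = \mathbf v - \lambda \mathbf w$, so $f(T\mathbf v) = f(\mathbf v - \lambda \mathbf w) = f(\mathbf v)$ by the defining property of $\rad(f)$ (applied with the scalar $-\lambda$). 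This gives the required noninvertible linear map.

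For the converse, suppose $T\colon V \to V$ is a noninvertible linear map with $f(T\mathbf v) = f(\mathbf v)$ for all $\mathbf v$. Since $T$ is noninvertible and $V$ is finite-dimensional, $\Ker T \neq \{0\}$; fix a nonzero $\mathbf w_0 \in \Ker T$. The difficulty is that membership of $\mathbf w_0$ in $\Ker T$ only tells us $f(\mathbf v) = f(T\mathbf v)$, not directly that $f(\mathbf v + \lambda \mathbf w_0) = f(\mathbf v)$; we need to exploit the invariance for \emph{all} vectors, not just along $\Ker T$. The key observation is that $T(\mathbf v + \lambda \mathbf w_0) = T\mathbf v$ for every $\mathbf v$ and every $\lambda$, hence $f(\mathbf v + \lambda \mathbf w_0) = f(T(\mathbf v + \lambda \mathbf w_0)) = f(T\mathbf v) = f(\mathbf v)$. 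Thus $\mathbf w_0 \in \rad(f)$, and since $\mathbf w_0 \neq 0$ we conclude $\rad(f)$ is nonzero.

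Both directions are short; the only subtlety, and the step I would expect to state carefully, is the converse's use of the hypothesis for the shifted vector $\mathbf v + \lambda \mathbf w_0$ rather than just for $\mathbf v$ itself, together with the elementary fact that a noninvertible endomorphism of a finite-dimensional space has nontrivial kernel. No deeper machinery is needed, and the argument is exactly (a mild reformulation of) the one in~\cite{Waterhouse1983}; I would simply record it here for completeness since it is used to establish invertibility of $\det_{n\,k}$-preservers.
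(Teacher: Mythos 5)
Your proof is correct and is essentially the argument the paper intends (it defers to Waterhouse's Proposition~1 rather than writing it out). Your converse direction is word-for-word the computation the paper does give in its proof of Lemma~\ref{lem:KernelContainedInRad} ($\Ker T \subseteq \rad(f)$), combined with the observation that a noninvertible endomorphism of a finite-dimensional space has nontrivial kernel; your forward direction, projecting along $\Span(\mathbf w)$ for a nonzero $\mathbf w \in \rad(f)$, is the standard construction and is valid since Definition~\ref{def:radical} assumes $V$ finite-dimensional.
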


\begin{lemma}\label{lem:KernelContainedInRad}If $T\colon V \to V$ is a linear map preserving $f$, then $\Ker T \subseteq \rad(f)$.
\end{lemma}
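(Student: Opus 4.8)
The plan is to unwind the definitions directly: the statement follows from the linearity of $T$ together with the defining property $f(T\mathbf v) = f(\mathbf v)$, applied not to $\mathbf v$ itself but to the perturbed vector $\mathbf v + \lambda \mathbf w$.

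Concretely, I would fix an arbitrary $\mathbf w \in \Ker T$ and show $\mathbf w \in \rad(f)$ by checking the defining condition of Definition~\ref{def:radical}. Let $\mathbf v \in V$ and $\lambda \in \F$ be arbitrary. Since $T$ is linear and $T\mathbf w = \mathbf 0$, we have $T(\mathbf v + \lambda \mathbf w) = T\mathbf v + \lambda T\mathbf w = T\mathbf v$. Applying the hypothesis that $T$ preserves $f$ — first to the vector $\mathbf v + \lambda \mathbf w$ and then to the vector $\mathbf v$ — gives
\[
f(\mathbf v + \lambda \mathbf w) = f\bigl(T(\mathbf v + \lambda \mathbf w)\bigr) = f(T\mathbf v) = f(\mathbf v).
\]
As $\mathbf v$ and $\lambda$ were arbitrary, this is exactly the condition $\mathbf w \in \rad(f)$, and since $\mathbf w \in \Ker T$ was arbitrary we conclude $\Ker T \subseteq \rad(f)$.

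There is essentially no obstacle here: the argument is a one-line computation and uses only the two hypotheses (linearity of $T$ and the $f$-preserving identity). The only point worth being careful about is to apply the preserving identity to the correct vector, namely $\mathbf v + \lambda \mathbf w$, rather than trying to relate $f(\mathbf v + \lambda \mathbf w)$ to $f(\mathbf v)$ in some other way; once that is noticed, the chain of equalities above closes immediately.
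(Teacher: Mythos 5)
Your proof is correct and is essentially identical to the paper's: both fix $\mathbf w \in \Ker T$, use linearity to get $T(\mathbf v + \lambda\mathbf w) = T\mathbf v$, and then apply the preserving identity to $\mathbf v + \lambda\mathbf w$ to obtain $f(\mathbf v + \lambda\mathbf w) = f(T\mathbf v) = f(\mathbf v)$. No gaps.
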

\begin{proof}Indeed, assume that $\mathbf w \in \Ker T.$ Then
\begin{equation*}
f(\mathbf v + \lambda\mathbf w)= f(T(\mathbf v + \lambda\mathbf w))\\
= f(T(\mathbf v) + \lambda T(\mathbf w)) = f(T(\mathbf v) + \mathbf 0) = f(\mathbf v)
\end{equation*}
for all $\mathbf v \in V$ and $\lambda \in \F$. Therefore, $\mathbf w \in \rad(f)$.
\end{proof}

\begin{remark}The lemma above is implicitly proved in~\cite{Waterhouse1983} (see~\cite[Proposition~1]{Waterhouse1983}).
\end{remark}

\begin{lemma}[Cf.~{\cite[Proposition~2]{Waterhouse1983}}]\label{lem:RadicalInvariant}
Let $V$ be a vector space over $\F$, $f\colon V \to \F$ be an arbitrary function. If $f$ has trivial radical, then the set of all linear maps preserving $f$ form a group. If $f$ has nontrivial radical $\rad(f)$, let $\pi_{rad}\colon V \to V/\rad(f)$ be the projection, and write $f = f_{\rad} \circ \pi_{\rad}$. Then the linear map $T \colon V \to V$ preserves $f$ if and only if
\begin{enumerate}[label=(\alph*), ref=(\alph*)]
\item\label{lem:RadicalInvariant:part1} $T(\rad(f)) \subseteq \rad(f)$  and
\item\label{lem:RadicalInvariant:part2} the map $T_{\rad} \colon V / \rad(f) \to V / \rad(f)$ that is given by $T_{\rad}(\pi_{\rad}(\mathbf v)) = \pi_{\rad}(T(\mathbf v))$ (and is well defined because of~\ref{lem:RadicalInvariant:part1}) preserves $f_{\rad}$.
\end{enumerate}
\end{lemma}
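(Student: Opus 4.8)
The plan is to treat the two assertions of the lemma separately, the second being the substantive one. Two one-line facts implicit in Definition~\ref{def:radical} are worth stating first: since $\lambda$ ranges over all of $\F$ in the defining condition, $R:=\rad(f)$ is a linear subspace of $V$; and taking $\lambda=1$ shows $f(\mathbf v+\mathbf r)=f(\mathbf v)$ for every $\mathbf r\in R$, so $f$ is constant on cosets of $R$, the function $f_{\rad}\colon V/R\to\F$ is well defined, and one checks directly that $\rad(f_{\rad})=\{\mathbf 0\}$. For the first assertion, assume $\rad(f)$ is trivial. The set $G$ of linear maps $V\to V$ preserving $f$ contains $\id$ and is closed under composition, since $f(S(T\mathbf v))=f(T\mathbf v)=f(\mathbf v)$. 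By Lemma~\ref{lem:RadicalCriterion}, triviality of $\rad(f)$ forces every member of $G$ to be invertible, and if $T\in G$ is invertible then $f(T^{-1}\mathbf v)=f(T(T^{-1}\mathbf v))=f(\mathbf v)$, so $T^{-1}\in G$; hence $G$ is a group.

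For the second assertion, one implication is a direct computation: assuming (a), the map $T_{\rad}$ is defined and satisfies $\pi_{\rad}\circ T=T_{\rad}\circ\pi_{\rad}$, so if moreover (b) holds then $f(T\mathbf v)=f_{\rad}(\pi_{\rad}(T\mathbf v))=f_{\rad}(T_{\rad}(\pi_{\rad}(\mathbf v)))=f_{\rad}(\pi_{\rad}(\mathbf v))=f(\mathbf v)$, i.e.\ $T$ preserves $f$. Conversely, suppose $T$ preserves $f$; it suffices to establish (a), because (b) then follows from $f_{\rad}(T_{\rad}(\pi_{\rad}(\mathbf v)))=f_{\rad}(\pi_{\rad}(T\mathbf v))=f(T\mathbf v)=f(\mathbf v)=f_{\rad}(\pi_{\rad}(\mathbf v))$ together with surjectivity of $\pi_{\rad}$. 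I would fix a complement $C$ of $R$ in $V$, let $p\colon V\to C$ be the projection along $R$, and put $g:=f|_C$, so that $g\circ p=f$ and (using $R$-invariance of $f$) $\rad(g)=\{\mathbf 0\}$. Since $T$ preserves $f$, the map $S:=p\circ T|_C\colon C\to C$ preserves $g$, hence is invertible by the first assertion applied to $g$ (equivalently, by Lemma~\ref{lem:RadicalCriterion}). Then for all $\mathbf r\in R$ and $\mathbf c\in C$,
\[
g\bigl(S(\mathbf c)+p(T\mathbf r)\bigr)=f\bigl(T(\mathbf c+\mathbf r)\bigr)=f(\mathbf c+\mathbf r)=g(\mathbf c);
\]
replacing $\mathbf c$ by $S^{-1}(\mathbf c)$ gives $g(\mathbf c+p(T\mathbf r))=g(S^{-1}(\mathbf c))$, and the special case $\mathbf r=\mathbf 0$ identifies the right-hand side with $g(\mathbf c)$, so $g(\mathbf c+p(T\mathbf r))=g(\mathbf c)$ for all $\mathbf c\in C$ and $\mathbf r\in R$. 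As $p(T(R))$ is a subspace of $C$, the identity persists with $p(T\mathbf r)$ scaled by any $\lambda\in\F$, whence $p(T(R))\subseteq\rad(g)=\{\mathbf 0\}$; that is $T(R)\subseteq R$, which is (a).

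The only delicate step is this converse, and within it the crux is recognizing that the component $p\circ T|_R$ of $T$ --- the part carrying vectors of $R$ into $C$ --- has image inside $\rad(g)$; it is the invertibility of the induced map $S\colon C\to C$, supplied by Lemma~\ref{lem:RadicalCriterion}, that drives this. Everything else is routine manipulation of the definitions.
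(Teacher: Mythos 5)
Your proof is correct. Note that the paper itself gives no proof of this lemma --- it is quoted verbatim from Waterhouse (Proposition~2 of the cited reference) --- so there is no in-paper argument to compare against; your write-up is a self-contained reconstruction, and it is essentially the argument Waterhouse gives, based on splitting $V=\rad(f)\oplus C$ and analyzing the block of $T$ that maps $\rad(f)$ into the complement. The preliminary observations (that $\rad(f)$ is a subspace, that $f$ descends to $f_{\rad}$ with trivial radical) are verified correctly, the forward direction and the group statement are routine, and the one genuinely delicate step --- showing that a preserver $T$ must satisfy $T(\rad(f))\subseteq\rad(f)$ --- is handled properly: the induced map $S=p\circ T|_C$ preserves $g=f|_C$, which has trivial radical, so Lemma~\ref{lem:RadicalCriterion} makes $S$ invertible, and the identity $g(\mathbf c+p(T\mathbf r))=g(\mathbf c)$ together with closure of $R$ under scaling forces $p(T(R))\subseteq\rad(g)=\{\mathbf 0\}$. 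The only point worth making explicit is that the invertibility conclusion drawn from Lemma~\ref{lem:RadicalCriterion} (injective implies invertible) uses finite-dimensionality of $V$, which the lemma statement here suppresses but Definition~\ref{def:radical} assumes and the application to $\M_{n\,k}(\F)$ supplies.
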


\section{The radical of $\det_{n\,k}$ if $n + k$ is odd}\label{sec:kernels}

%In this subsection we assume that $n \ge k$ and $n + k$ is odd. This implies that $n \ge k+1$.

\begin{remark}The approach used in this section is similar to one used in~\cite[Section~4]{Guterman2025} but differs in details.
\end{remark}

\begin{lemma}\label{lem:DetNKXiEqX1}Suppose that $n \ge k \ge 1, n+k$ is odd, $x_1, \ldots, x_n \in \F$ and
\[
X = \begin{pmatrix}
x_1 & 0 & 0 & \cdots & 0\\
x_2 & 0 & 0 & \cdots & 0\\
x_3 & 1 & 0 &\cdots & 1\\
x_4 & 0 & 1 &\cdots & 1\\
\vdots & \vdots & \vdots & \ddots & \vdots\\
x_k & 0 & 0 & \cdots & 1\\
%x_{k+1} & 0 & 0 & \cdots  & 1\\
\vdots & \vdots & \vdots & \ddots & \vdots\\
x_{n} & 0 & 0 & \cdots & 1
\end{pmatrix} \in \M_{n\,k}(\F).
\]
Then
\[
\det_{n\,k}(X) = (-1)^{k-1}(x_1 - x_2).
\]
\end{lemma}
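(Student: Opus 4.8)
The plan is to compute $\det_{n\,k}(X)$ by Laplace expansion, exploiting the very special sparsity pattern of $X$: after the first column, columns $2,\ldots,k$ form essentially a ``staircase of ones'' (column $j$ for $2 \le j \le k$ has a $1$ in row $j+1$ and $1$'s in all rows from $k+1$ on, plus possibly the staircase entry). First I would isolate the contribution of the first column. By Lemma~\ref{lem:DetNKLaplaceExp}, expanding along the first column gives
\[
\det_{n\,k}(X) = \sum_{i=1}^{n} (-1)^{i+1} x_i \det_{(n-1)\,(k-1)}\bigl(X(i|1)\bigr).
\]
The matrix $X(i|1)$ has $k-1$ columns built from the staircase-of-ones pattern. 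For most rows $i$, I expect $\det_{(n-1)\,(k-1)}(X(i|1))=0$ because two of its columns will be linearly dependent (property~(3) of $\det_{n\,k}$): once the row carrying a given staircase $1$ is deleted, several columns collapse to the all-ones-from-some-point-on shape and coincide. The upshot I am aiming for is that only $i=1$ and $i=2$ survive, with $\det_{(n-1)\,(k-1)}(X(1|1)) = \det_{(n-1)\,(k-1)}(X(2|1))$, so that $\det_{n\,k}(X) = (x_1 - x_2)\cdot c$ for a constant $c = \pm 1$ independent of the $x_i$'s.

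To pin down $c$, I would evaluate $\det_{(n-1)\,(k-1)}(X(1|1))$ directly. After deleting row $1$ and column $1$, the remaining rows are indexed (originally) by $2,3,\ldots,n$, and the columns are the staircase-of-ones columns. Column subtraction (property~(5) of $\det_{n\,k}$, Corollary-type column operations) should reduce the all-ones tails: subtracting column $k$ from columns $2,\ldots,k-1$ turns the ``$1$'s from row $k+1$ on'' into zeros and leaves a single staircase $1$ in each of those columns, while column $k$ keeps its tail of $1$'s. What remains after these operations is a matrix whose only nonzero structure is: a permutation-like block of $1$'s on the staircase diagonal in rows $3,\ldots,k$, the tail of $1$'s in the last column, and a row of zeros (old row $2$, now all zero in columns $2,\ldots,k$). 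Using Lemma~\ref{lem:AMIRILEMMA32} to drop zero rows and Lemma~\ref{lem:NAKAGAMILEMMA20} to handle the all-ones last column (noting the parity of the reduced sizes), together with the sign bookkeeping from the column permutation, I expect this to collapse to $(-1)^{k-1}$.

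The main obstacle is the sign/parity bookkeeping: keeping track of (i) the $(-1)^{i+j}$ factors from Laplace expansion, (ii) the sign changes from the column operations and any column reordering needed to expose the staircase as a triangular block, and (iii) the parity conditions in Lemmas~\ref{lem:AMIRILEMMA32} and~\ref{lem:NAKAGAMILEMMA20}, which is why the hypothesis ``$n+k$ is odd'' must be used carefully — it is precisely this parity that makes the all-ones column contribute (rather than vanish) and fixes the global sign as $(-1)^{k-1}$. A clean alternative that sidesteps part of the Laplace bookkeeping is to first perform the column operations on the full matrix $X$ (they don't touch the first column), reducing $X$ to a matrix with a triangular staircase block plus one all-ones column, then apply Lemma~\ref{lem:NAKAGAMILEMMA20} once to remove the all-ones column (using that the remaining size has the right parity), and finally expand what is left — an $n\times(k-1)$ matrix that is column $1 = (x_i)$ together with a staircase of $1$'s — along its first column, where only the two rows not hit by the staircase survive and contribute $x_1$ and $-x_2$ with the asserted sign. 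I would present whichever of these two routes makes the sign computation most transparent, most likely the second.
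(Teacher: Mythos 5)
Your overall strategy (Laplace expansion along the first column, show only $i=1,2$ survive, evaluate the surviving minor) can be made to work, but both of your routes contain genuine errors, and the crucial step is exactly the one you gloss over. First, you have misread the matrix: columns $2,\dots,k-1$ each contain a \emph{single} $1$ (column $j$ has its $1$ in row $j+1$) and no tail of $1$'s in rows $k+1,\dots,n$; only column $k$ carries the tail $e_3+\dots+e_n$. So the operation ``subtract column $k$ from columns $2,\dots,k-1$'' cleans nothing up --- it would instead introduce $-1$'s into those columns. Second, and more seriously, your explanation of why the minors $\det_{(n-1)\,(k-1)}\bigl(X(i|1)\bigr)$ vanish is wrong for $i\ge k+1$: there the columns of $X(i|1)$ are $e_3,\dots,e_k$ together with $e_3+\dots+e_{n-1}$, which are linearly \emph{independent} whenever $n>k+1$, so property (3) does not apply. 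These minors do vanish, but only because of the parity hypothesis: one must split the last column as $\mathbf 1-(e_1+e_2)$, kill the all-ones piece by Lemma~\ref{lem:NAKAGAMILEMMA20} (the relevant size has even sum), and check that the two remaining permutation-type terms cancel. This is the heart of the lemma and is absent from your sketch. (Rows $3\le i\le k$ are fine: there $X(i|1)$ really does acquire a zero column.)

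Your ``cleaner'' second route is not salvageable as stated. With $n+k$ odd, Lemma~\ref{lem:NAKAGAMILEMMA20} applied to an $n\times(k-1)$ matrix with an all-ones column appended says the determinant is $0$ (since $n+(k-1)$ is even); it does not let you ``remove'' the column and keep the rest. And if you simply discard the last column and expand the remaining $n\times(k-1)$ matrix $\bigl((x_i)\,\big|\,e_3\,\big|\cdots\big|\,e_k\bigr)$ along its first column, then \emph{every} row $i\in\{1,2\}\cup\{k+1,\dots,n\}$ contributes a nonzero term $\pm x_i$, because each such minor consists of $k-2$ distinct standard basis vectors and has Cullis determinant $\pm1$; you would not obtain a multiple of $x_1-x_2$. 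It is precisely the tail of the last column that cancels the contributions of rows $k+1,\dots,n$. The paper's proof handles all of this by the decomposition $e_3+\dots+e_n=\mathbf 1-(e_1+e_2)$ in the last column: the all-ones summand dies by parity, and the $(e_1+e_2)$ summand is evaluated by collapsing the staircase with $k-2$ Laplace expansions along the second column, leaving a two-column matrix whose determinant is $x_1-x_2$ and a sign $(-1)^{k-2}$. If you repair your first route by proving the vanishing for $i\ge k+1$ via this same decomposition and compute $\det_{(n-1)\,(k-1)}\bigl(X(1|1)\bigr)=(-1)^{k-1}$ the same way, your argument goes through.
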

\begin{proof}
Observe that
\begin{equation}\label{lem:DetNKXiEqX1:eq1}
\begin{vmatrix}
x_1 & 0 & 0 & \cdots & 0\\
x_2 & 0 & 0 & \cdots & 0\\
x_3 & 1 & 0 &\cdots & 1\\
x_4 & 0 & 1 &\cdots & 1\\
\vdots & \vdots & \vdots & \ddots & \vdots\\
x_k & 0 & 0 & \cdots & 1\\
%x_{k+1} & 0 & 0 & \cdots  & 1\\
\vdots & \vdots & \vdots & \ddots & \vdots\\
x_{n} & 0 & 0 & \cdots & 1
\end{vmatrix}_{n\,k} =
\begin{vmatrix}
x_1 & 0 & 0 & \cdots & 1\\
x_2 & 0 & 0 & \cdots & 1\\
x_3 & 1 & 0 &\cdots & 1\\
x_4 & 0 & 1 &\cdots & 1\\
\vdots & \vdots & \vdots & \ddots & \vdots\\
x_k & 0 & 0 & \cdots & 1\\
%x_{k+1} & 0 & 0 & \cdots  & 1\\
\vdots & \vdots & \vdots & \ddots & \vdots\\
x_{n} & 0 & 0 & \cdots & 1
\end{vmatrix}_{n\,k}
-
\begin{vmatrix}
x_1 & 0 & 0 & \cdots & 1\\
x_2 & 0 & 0 & \cdots & 1\\
x_3 & 1 & 0 &\cdots & 0\\
x_4 & 0 & 1 &\cdots & 0\\
\vdots & \vdots & \vdots & \ddots & \vdots\\
x_k & 0 & 0 & \cdots & 0\\
%x_{k+1} & 0 & 0 & \cdots  & 0\\
\vdots & \vdots & \vdots & \ddots & \vdots\\
x_{n} & 0 & 0 & \cdots & 0
\end{vmatrix}_{n\,k}.
\end{equation}

Consider the first term in the difference~\eqref{lem:DetNKXiEqX1:eq1}. Lemma~\ref{lem:NAKAGAMILEMMA20} implies that it is zero since $n - k - 1$ is even. 

Now consider the second term in the difference~\eqref{lem:DetNKXiEqX1:eq1}. Using the Laplace expansion along the second column for $k-2$ times we obtain that
\begin{equation}\label{lem:DetNKXiEqX1:eq3}
\begin{vmatrix}
x_1 & 0 & 0 & \cdots & 1\\
x_2 & 0 & 0 & \cdots & 1\\
x_3 & 1 & 0 &\cdots & 0\\
x_4 & 0 & 1 &\cdots & 0\\
\vdots & \vdots & \vdots & \ddots & \vdots\\
x_k & 0 & 0 & \cdots & 0\\
%x_{k+1} & 0 & 0 & \cdots  & 0\\
\vdots & \vdots & \vdots & \ddots & \vdots\\
x_{n} & 0 & 0 & \cdots & 0
\end{vmatrix}_{n\,k} = (-1)^{s} \begin{vmatrix}
x_1 & 1\\
x_2 & 1\\
x_{k+1} & 0\\
%x_{k+1}\\
\vdots & \vdots \\
x_{n} & 0
\end{vmatrix}_{n-k+2\,2},
\end{equation}
where 
\[
s = {(2+3) + \ldots + (2+3)} = (2+3)(k-2).
\]
Hence, \begin{equation}\label{lem:DetNKXiEqX1:eq4}
(-1)^{s} = (-1)^{k-2}.
\end{equation}

The Laplace expansion along the second column yields that
\begin{multline}\label{lem:DetNKXiEqX1:eq2}
\begin{vmatrix}
x_1 & 1\\
x_2 & 1\\
x_{k+1} & 0\\
%x_{k+1}\\
\vdots & \vdots\\
x_{n} & 0
\end{vmatrix}_{n-k+1\,2} = (-1)^{2 + 1}\begin{vmatrix}
x_2 \\
x_{k+1}\\
%x_{k+1}\\
\vdots \\
x_{n} 
\end{vmatrix}_{n-k+1\,1}
+ (-1)^{2 + 2}
\begin{vmatrix}
x_1 \\
x_{k+1}\\
%x_{k+1}\\
\vdots \\
x_{n} 
\end{vmatrix}_{n-k+1\,1}\\ = -\left(x_2 + \sum_{i=1}^{n-k} (-1)^{i} x_{k+i}\right) + \left(x_1 + \sum_{i=1}^{n-k} (-1)^{i} x_{k+i}\right) = x_1 - x_2.
\end{multline}

Thus, after making the substitution of~\eqref{lem:DetNKXiEqX1:eq2}, \eqref{lem:DetNKXiEqX1:eq3} and~\eqref{lem:DetNKXiEqX1:eq4} into~\eqref{lem:DetNKXiEqX1:eq1}, we  obtain that
\[
\det_{n\,k}(X) = 0 - (-1)^{k-2}(x_1 - x_2) = (-1)^{k-1}(x_1 - x_2).
\]
\end{proof}

\begin{lemma}\label{lem:NKEvenAdditionPreserverHasZero11Odd}Let $n \ge k, n + k$ be even, $|\F| > k$ and $Y \in \M_{n\, k} (\F)$ be such that
\begin{equation}\label{lem:NKEvenAdditionPreserverHasZero11Odd:cond}
\det_{n\, k} (A + \lambda Y) = \det_{n\, k} (A)
\end{equation}
for all $A \in \M_{n\, k}(\F)$ and $\lambda \in \F$. Then $y_{1\,1} - y_{2\,1} = 0$.
\end{lemma}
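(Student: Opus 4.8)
The hypothesis~\eqref{lem:NKEvenAdditionPreserverHasZero11Odd:cond} is exactly the statement that $Y$ lies in $\rad(\det_{n\,k})$ (Definition~\ref{def:radical}). The plan is to turn this into one scalar identity that pins down $y_{1\,1}-y_{2\,1}$, by extracting the degree-one part of the column expansion in Corollary~\ref{cor:CullisBinomialExpansion} and evaluating it on a single well-chosen matrix.

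First I would fix $A\in\M_{n\,k}(\F)$ and view $p_A(\lambda):=\det_{n\,k}(A+\lambda Y)\in\F[\lambda]$. By Corollary~\ref{cor:DegDetABLEQK} one has $\deg p_A\le k<|\F|$, while~\eqref{lem:NKEvenAdditionPreserverHasZero11Odd:cond} says $p_A(\lambda)=\det_{n\,k}(A)$ for every $\lambda\in\F$; since a polynomial of degree at most $k$ with more than $k$ roots is zero, $p_A$ is the constant polynomial $\det_{n\,k}(A)$. In particular its coefficient of $\lambda$ vanishes, which by the $d=1$ term of Corollary~\ref{cor:CullisBinomialExpansion} gives
\[
\sum_{i=1}^{k}\det_{n\,k}\Bigl(A(|1]\Big|\ldots\Big|Y(|i]\Big|\ldots\Big|A(|k]\Bigr)=0
\]
for every $A\in\M_{n\,k}(\F)$.

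Next I would specialise $A$: take its first column to be $O_{n\,1}$ and take its columns $2,\ldots,k$ to be the columns $2,\ldots,k$ of the matrix appearing as the second determinant in~\eqref{lem:DetNKXiEqX1:eq1}; explicitly, column $j$ of $A$ has a single $1$ in row $j+1$ for $2\le j\le k-1$, and column $k$ of $A$ equals $(1,1,0,\ldots,0)^t$. Because $A(|1]=O_{n\,1}$, every summand with $i\ge 2$ is a Cullis determinant having a zero first column, hence equals $0$, so the displayed identity collapses to
\[
\det_{n\,k}\Bigl(Y(|1]\Big|A(|2]\Big|\ldots\Big|A(|k]\Bigr)=0 .
\]
The matrix inside is precisely the one in the second determinant of~\eqref{lem:DetNKXiEqX1:eq1} with each $x_i$ replaced by $y_{i\,1}$. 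Its value was computed in the proof of Lemma~\ref{lem:DetNKXiEqX1} (equations~\eqref{lem:DetNKXiEqX1:eq3}, \eqref{lem:DetNKXiEqX1:eq4}, \eqref{lem:DetNKXiEqX1:eq2}) by iterated Laplace expansions along the second column followed by one along the last column, a computation that never uses the parity of $n+k$, and it equals $(-1)^{k-2}(y_{1\,1}-y_{2\,1})$. Hence $(-1)^{k-2}(y_{1\,1}-y_{2\,1})=0$, and therefore $y_{1\,1}-y_{2\,1}=0$.

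The only delicate point — and the reason to argue through the radical rather than directly — is that, with $n+k$ even, $\rad(\det_{n\,k})$ may still be nonzero on account of other entries of $Y$, so one cannot simply read off $y_{1\,1}-y_{2\,1}$ from the condition. The real work is in choosing a test matrix $A$ that simultaneously (i) annihilates the unwanted summands $i\ge 2$ and (ii) turns the surviving summand into a Cullis determinant with a known closed form; recognising that the matrix from the proof of Lemma~\ref{lem:DetNKXiEqX1} serves both purposes is the crux, and everything else is the polynomial/degree bookkeeping legitimised by $|\F|>k$. (The hypothesis that $n+k$ is even is in fact not needed for this particular conclusion; it merely records the setting in which the lemma will be applied.)
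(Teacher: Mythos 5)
Your proof is correct and follows essentially the same route as the paper: use $|\F|>k$ to kill the degree-one coefficient of $\det_{n\,k}(A+\lambda Y)$, choose a test matrix $A$ with zero first column so that only the $i=1$ summand of the expansion survives, and evaluate the surviving Cullis determinant in closed form to isolate $y_{1\,1}-y_{2\,1}$. The only difference is cosmetic: the paper takes the last column of $A$ to be $(0,0,1,\ldots,1)^t$ and invokes Lemma~\ref{lem:DetNKXiEqX1} as a whole, whereas you take $(1,1,0,\ldots,0)^t$ and reuse only the Laplace-expansion part of that lemma's proof; your variant has the small advantage of making explicit that the parity of $n+k$ is irrelevant here, which is worth noting since the stated hypothesis ``$n+k$ even'' sits awkwardly with the fact that the lemma is later applied (in Lemma~\ref{lem:NKOddRadDetNK}) with $n+k$ odd.
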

\begin{proof}Let $A \in \M_{n\, k}(\F)$ be defined by
\[
A = E_{3\,2} + E_{4\,3} + \ldots + E_{k\, k-1} + E_{3\, k} + \ldots + E_{n\, k}  = \begin{pmatrix}
0 & 0 & 0 & \cdots & 0\\
0 & 0 & 0 & \cdots & 0\\
0 & 1 & 0 &\cdots & 1\\
0 & 0 & 1 &\cdots & 1\\
\vdots & \vdots & \vdots & \ddots & \vdots\\
0 & 0 & 0 & \cdots & 1\\
%0 & 0 & 0 & \cdots  & 1\\
\vdots & \vdots & \vdots & \ddots & \vdots\\
0 & 0 & 0 & \cdots & 1
\end{pmatrix}.
\]

It follows from Corollary~\ref{cor:CullisBinomialExpansion} that there is a polynomial $P \in \F[\lambda]$ with $\deg_{\lambda}(P) \le k$ such that $P = \det_{n\, k} (A + \lambda Y)$. Let $a_0, \ldots, a_k \in \F$ be the coefficients of $P$. That is,
 \[P =   \det_{n\, k} (A + \lambda Y) = a_0 + a_1\lambda + \ldots + a_k \lambda^k.
  \]
On the one hand, the condition~\eqref{lem:NKEvenAdditionPreserverHasZero11Odd:cond} implies that
\[
a_0 + a_1\lambda + \ldots + a_k \lambda^k = \det_{n\,k} (A)
\]
for all $\lambda \in \F$. Therefore, 
\begin{equation}\label{lem:NKEvenAdditionPreserverHasZero11Odd:eq3}
a_1 = \ldots = a_k = 0
\end{equation}
 by Lemma~\ref{DegFGLessEqual} because  $|\F| > k$.

On the other hand, the definition of $a_1$ implies that
\[
a_1 = \sum_{1 \le i_1 \le k} \det_{n\,k}\Bigl(A(|1]\Big|\ldots \Big| Y(|i_1] \Big| \ldots \Big| A(|k] \Bigr)
\]
by~\eqref{eq:CullisBinomialExpansion}. Since $A(|1]$ is a zero column, this implies that 
\begin{multline*}
a_1 = \sum_{1 \le i_1 \le k} \det_{n\,k}\Bigl(A(|1]\Big|\ldots \Big| Y(|i_1] \Big| \ldots \Big| A(|k] \Bigr)\\
 = \det_{n\,k} \Bigl(Y(|1] \Big| A(|2] \Big| \ldots \Big| A(|k]\Bigr)  + \sum_{2 \le i_1 \le k} \det_{n\,k}\Bigl(A(|1]\Big|\ldots \Big| Y(|i_1] \Big| \ldots \Big| A(|k] \Bigr)\\
 = \det_{n\,k} \Bigl(Y(|1] \Big| A(|2] \Big| \ldots \Big| A(|k]\Bigr)  + \sum_{2 \le i_1 \le k} \det_{n\,k}\Bigl(0\Big|\ldots \Big| Y(|i_1] \Big| \ldots \Big| A(|k] \Bigr)\\
 =  \det_{n\,k} \Bigl(Y(|1] \Big| A(|2] \Big| \ldots \Big| A(|k]\Bigr) + \sum_{2 \le i_1 \le k} 0\\
 = \det_{n\,k} \Bigl(Y(|1] \Big| A(|2] \Big| \ldots \Big| A(|k]\Bigr).
\end{multline*}

By substituting of the right-hand side of the above equality in~\eqref{lem:NKEvenAdditionPreserverHasZero11Odd:eq3} we obtain that
\begin{equation}\label{lem:NKEvenAdditionPreserverHasZero11Odd:eq1}
\det_{n\,k} \Bigl(Y(|1] \Big| A(|2] \Big| \ldots \Big| A(|k]\Bigr) = 0.
\end{equation}

Now note that $\Bigl(Y(|1] \Big| A(|2] \Big| \ldots \Big| A(|k]\Bigr)$ has the form described in the statement of Lemma~\ref{lem:DetNKXiEqX1} for $x_1 = y_{1\, 1}, \ldots, x_n = y_{n\,1}$. This implies that\begin{equation}\label{lem:NKEvenAdditionPreserverHasZero11Odd:eq2}
\det_{n\, k} \Bigl(Y(|1] \Big| A(|2] \Big| \ldots \Big| A(|k]\Bigr) = (-1)^{k-1}(y_{1\,1} - y_{2\,1}).
\end{equation}

The equalities~\eqref{lem:NKEvenAdditionPreserverHasZero11Odd:eq1} and~\eqref{lem:NKEvenAdditionPreserverHasZero11Odd:eq2} allow us to conclude that
\[
(-1)^{k-1}(y_{1\,1} - y_{2\,1}) = \det_{n\, k} \Bigl(Y(|1] \Big| A(|2] \Big| \ldots \Big| A(|k]\Bigr) = 0
\]
and consequently
\[
y_{1\,1} - y_{2\,1} = 0.
\]
\end{proof}

\begin{definition}\label{def:SCSDef}Let $n \ge k$ and $1 \le i \le n$, $1 \le j \le k$. By $\SCS_{i\,j}$ we denote a linear map on $\M_{n\,k}(\F)$ defined by 
\begin{multline*}
\SCS_{i\,j}\begin{pmatrix}
x_{1\,1} & \cdots  & x_{1\,k}\\
  \vdots & \ddots & \vdots\\
x_{n\,1} & \cdots & x_{n\,k}\\
\end{pmatrix}\\ = (-1)^{i+1}\begin{pmatrix}
(-1)^{1 - \delta_{1\,j}}x_{i\, j} & \ldots & x_{i\,1} & \ldots & x_{i\, k}\\
\vdots & \ddots & \vdots & \ddots  & \vdots\\
(-1)^{1 - \delta_{1\,j}}x_{n\, j} & \ldots & x_{n\,1} & \ldots & x_{n\, k}\\
(-1)^{1 - \delta_{1\,j}}x_{1\, j} & \ldots &  x_{1\,1} & \ldots & x_{1\, k}\\
\vdots & \ddots & \vdots & \ddots & \vdots\\
(-1)^{1 - \delta_{1\,j}}x_{i-1\, j} & \ldots & x_{i-1\,1} & \ldots &  x_{i-1\, k}
\end{pmatrix}.
\end{multline*}
That is, $\SCS_{i\,j}(X)$ is obtained from $X$ by performing the following sequence of operations:
\begin{enumerate}
\item\label{lst:SIJSeq:it1}the row cyclical shift sending $i$-th row of $X$ to the first row of the result;
\item exchanging the first and the $j$-th column;
\item\label{lst:SIJSeq:it3} multiplying the first column  by $(-1)^{1 - \delta_{1\,j}}$;
\item multiplying all the entries by $(-1)^{i+1}$.
\end{enumerate}
\end{definition}

\begin{lemma}\label{lem:ReduceTo11ByShiftOdd}Assume that $n \ge k$ and $n + k$ is odd. Then $\SCS_{i\,j}$ is an invertible linear map preserving $\det_{n\,k}$ for all $1 \le i \le n$, $1 \le j \le k$.
\end{lemma}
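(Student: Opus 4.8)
The plan is to analyze the effect of $\SCS_{i\,j}$ as a composition of four elementary operations and show that each one is an invertible linear map that either preserves $\det_{n\,k}$ outright or produces a controlled sign. Concretely, I would write $\SCS_{i\,j} = M_4 \circ M_3 \circ M_2 \circ M_1$, where $M_1$ is the cyclic row shift of step~\ref{lst:SIJSeq:it1}, $M_2$ is the swap of column $1$ with column $j$, $M_3$ is scaling of the first column by $(-1)^{1-\delta_{1\,j}}$, and $M_4$ is global scaling of all entries by $(-1)^{i+1}$. Each $M_\ell$ is visibly a bijective linear endomorphism of $\M_{n\,k}(\F)$ (each has an obvious inverse), so $\SCS_{i\,j}$ is an invertible linear map; it remains only to track what happens to the Cullis determinant.

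First I would handle $M_1$: by Lemma~\ref{lem:PermKNOddCyclic} (invariance under cyclic shifts, valid since $n+k$ is odd), the cyclic shift sending row $i$ to the top multiplies $\det_{n\,k}$ by $(-1)^{(i+1)k}$. Next, $M_2$ multiplies $\det_{n\,k}$ by $-1$ when $j\neq 1$ (column interchange, by the sign property of $\det_{n\,k}$ with respect to columns) and by $+1$ when $j=1$; that is, $M_2$ contributes a factor $(-1)^{1-\delta_{1\,j}}$. Then $M_3$ scales one column by $(-1)^{1-\delta_{1\,j}}$, and by column-linearity of $\det_{n\,k}$ this multiplies the determinant by the same factor $(-1)^{1-\delta_{1\,j}}$; combined with $M_2$ this gives $\bigl((-1)^{1-\delta_{1\,j}}\bigr)^2 = 1$, so the column swap-and-rescale together preserve $\det_{n\,k}$. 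Finally $M_4$ scales all $nk$ entries by $c := (-1)^{i+1}$; since $\det_{n\,k}$ is homogeneous of degree $k$ in the entries (it is $k$-linear in the columns, with one factor drawn from each of the $k$ columns in every monomial of Definition~\ref{def:CullisDet}), this multiplies the determinant by $c^k = (-1)^{(i+1)k}$.

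Putting the four contributions together, $\det_{n\,k}(\SCS_{i\,j}(X)) = (-1)^{(i+1)k}\cdot 1 \cdot (-1)^{(i+1)k}\,\det_{n\,k}(X) = (-1)^{2(i+1)k}\det_{n\,k}(X) = \det_{n\,k}(X)$, which is the claim. I do not anticipate a genuine obstacle here; the only point requiring a little care is bookkeeping the two occurrences of $(-1)^{(i+1)k}$ — one from the cyclic shift of Lemma~\ref{lem:PermKNOddCyclic} and one from degree-$k$ homogeneity under the global sign $(-1)^{i+1}$ — and confirming that they cancel rather than reinforce. (Note that the exponent $1-\delta_{1\,j}$ in step~\ref{lst:SIJSeq:it3} is precisely engineered so that steps (2) and (3) jointly leave $\det_{n\,k}$ unchanged, which is why the column scaling is present at all.) A remark worth recording is that the hypothesis $n+k$ odd is used only through Lemma~\ref{lem:PermKNOddCyclic}; for $n+k$ even the cyclic-shift sign is different and $\SCS_{i\,j}$ need not preserve $\det_{n\,k}$.
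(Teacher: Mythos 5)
Your proof is correct and follows essentially the same route as the paper: both arguments rest on Lemma~\ref{lem:PermKNOddCyclic} for the cyclic row shift, the column sign/linearity properties for the swap-and-rescale (whose two factors $(-1)^{1-\delta_{1\,j}}$ cancel), and degree-$k$ multilinearity for the global factor $(-1)^{i+1}$, with the two occurrences of $(-1)^{(i+1)k}$ cancelling. The only cosmetic difference is that the paper packages the column exchange and rescaling as right multiplication by a matrix $C_S$ with $\det_k(C_S)=1$ and invokes Lemma~\ref{lem:rightmatrixmult}, whereas you track the two signs directly.
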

\begin{proof}Invertibility of $\SCS_{i\,j}$ follows directly from the definition. Assume that $X = (x_{i\,j}) \in \M_{n\,k}(\F)$. Let $X' \in \M_{n\,k}(\F)$ be defined by 
\[
X' = 
\begin{pmatrix}
-x_{i\, j} & \ldots & x_{i\,1} & \ldots & x_{i\, k}\\
\vdots & \ddots & \vdots & \ddots  & \vdots\\
-x_{n\, j} & \ldots & x_{n\,1} & \ldots & x_{n\, k}\\
-x_{1\, j} & \ldots &  x_{1\,1} & \ldots & x_{1\, k}\\
\vdots & \ddots & \vdots & \ddots & \vdots\\
-x_{i-1\, j} & \ldots & x_{i-1\,1} & \ldots &  x_{i-1\, k}
\end{pmatrix},
\]
That is, $X'$ is obtained from $X$ by performing operations \ref{lst:SIJSeq:it1}--\ref{lst:SIJSeq:it3} from the sequence in the definition of $\SCS_{i\,j}$ (Definition~\ref{def:SCSDef}).

Let $C_{S} \in \M_{k\,k}(\F)$ be a matrix such that a linear map $Y \mapsto YC_S$ on $\M_{n\,k}(\F)$ exchanges the first and the $j$-th row of the matrix and multiplies its first column by $(-1)^{1 - \delta_{1\,j}}$. That is, $C_S$ is the permutation matrix corresponding to a transposition/trivial permutation $(1j)$ which first row is multiplied by $(-1)^{1 - \delta_{1\,j}}$. 

Then $X' = (-1)^{i+1}\SCS_{i\,j}(X)$ by the definition of $\SCS_{i\,j}$. In addition, $\det_k(C_S) = 1$ by the definition of $C_S$. 

On the one hand, we have the following sequence of equalities
\begin{equation}\label{lem:ReduceTo11ByShiftOdd:eq1}
\det_{n\,k}(\SCS_{i\,j}(X)) = \det_{n\,k}((-1)^{i+1}X') = (-1)^{(i+1)k}\det_{n\,k}(X')
\end{equation}
where the last equality follows from the multilinearity of $\det_{n\,k}$ with respect to columns of $X'$.

On the other hand,  
\[
XC_S =  \begin{pmatrix}
(-1)^{1 - \delta_{1\,j}}x_{1\, j} & \ldots & x_{1\,1} & \ldots & x_{1\, k}\\
\vdots & \ddots & \vdots & \ddots  & \vdots\\
(-1)^{1 - \delta_{1\,j}}x_{(n-i+1)\, j} & \ldots & x_{(n-i+1)\,1} & \ldots & x_{(n-i+1)\, k}\\
(-1)^{1 - \delta_{1\,j}}x_{(n-i+2)\, j} & \ldots &  x_{1\,1} & \ldots & x_{1\, k}\\
\vdots & \ddots & \vdots & \ddots & \vdots\\
(-1)^{1 - \delta_{1\,j}}x_{n\, j} & \ldots & x_{i-1\,1} & \ldots &  x_{i-1\, k}
\end{pmatrix}.
\]
by the definition of $C_S$. Hence, \begin{multline}\label{lem:ReduceTo11ByShiftOdd:eq2}
(-1)^{(i+1)k}\det_{n\,k}(X') = (-1)^{(i+1)k}\begin{vmatrix}
-x_{i\, j} & \ldots & x_{i\,1} & \ldots & x_{i\, k}\\
\vdots & \ddots & \vdots & \ddots  & \vdots\\
-x_{n\, j} & \ldots & x_{n\,1} & \ldots & x_{n\, k}\\
-x_{1\, j} & \ldots &  x_{1\,1} & \ldots & x_{1\, k}\\
\vdots & \ddots & \vdots & \ddots & \vdots\\
-x_{i-1\, j} & \ldots & x_{i-1\,1} & \ldots &  x_{i-1\, k}
\end{vmatrix}\\
 = \det_{n\,k}(XC_S),
\end{multline}
where the last equality follows from Lemma~\ref{lem:PermKNOddCyclic}. In addition, since $\det_k(C_S) = 1$, then
\begin{equation}\label{lem:ReduceTo11ByShiftOdd:eq3}
\det_{n\,k}(XC_S) = \det_{n\,k}(X)
\end{equation}
by Lemma~\ref{lem:rightmatrixmult}.

Therefore, \[
\det_{n\,k}(\SCS_{i\,j}(X)) = (-1)^{(i+1)k}\det_{n\,k}(X') = \det_{n\,k}(XC_S) = \det_{n\,k}(X)
\]
by \eqref{lem:ReduceTo11ByShiftOdd:eq1} and \eqref{lem:ReduceTo11ByShiftOdd:eq2}, \eqref{lem:ReduceTo11ByShiftOdd:eq3}. From this we conclude that $\SCS_{i\,j}$ preserves $\det_{n\,k}$.
\end{proof}

\begin{definition}\label{def:WNK}By $W_{n\,k}\subseteq \M_{n\,k}(\F)$ we denote $k$-dimensional vector space consisting of matrices, all rows of which are equal. That is,
\[
W_{n\,k} = \{\begin{pmatrix}y_1 & \cdots & y_k\\ \vdots & \ddots & \vdots\\ y_1 & \cdots & y_k\end{pmatrix} \mid y_1, \ldots, y_k \in \F\}.
 \]
\end{definition}

\begin{lemma}\label{lem:NKOddRadDetNK}Assume that $|\F| > k$, $n \ge k$ and $n + k$ is odd. Then 
\begin{equation}\label{lem:NKOddRadDetNK:eqq}
\rad(\det_{n\,k}) =  W_{n\,k}.
\end{equation}
\end{lemma}
\begin{proof}
%Let $W_{n\,k} \subseteq \M_{n\,k}(\F)$ be a vector space defined by
%\[
%W_{n\,k} = \{\begin{pmatrix}y_1 & \cdots & y_k\\ \vdots & \ddots & \vdots\\ y_1 & \cdots & y_k\end{pmatrix} \mid y_1, \ldots, y_k \in \F\}.
%\]
The inclusion $W_{n\,k} \subseteq \rad(\det_{n\,k})$  follows from Lemma~\ref{lem:AdditionNPLusKOdd}. Let us prove the inclusion 
\begin{equation}\label{lem:NKOddRadDetNK:eq1}
\rad(\det_{n\,k}) \subseteq  W_{n\,k}.
\end{equation}
For this, assume that $Y \in \rad(\det_{n\,k})$ and show that $y_{i\,j} - y_{i+1\,j}= 0$ for all $1 \le i < n, 1 \le j \le k.$ This clearly implies~\eqref{lem:NKOddRadDetNK:eq1}.

Let $1 \le i \le n$ and $1 \le j \le k$. Consider a matrix $\SCS_{i\,j}(Y)$, where $\SCS_{i\,j}$ is a linear map defined in Definition~\ref{def:SCSDef}. This definition implies that

\begin{equation}\label{lem:NKEvenAdditionPreserverIsZeroOdd:eq1}
\left(\SCS_{i\,j}(Y)\right)_{1\,1} = (-1)^{i - \delta_{1\,j}}y_{i\,j}\quad\mbox{and}\quad \left(\SCS_{i\,j}(Y)\right)_{2\,1} = (-1)^{i - \delta_{1\,j}}y_{(i+1)\,j}.
\end{equation}
Also
\begin{multline*}
\det_{n\,k} (A + \lambda \SCS_{i\,j}(Y)) = \det_{n\,k} (\SCS_{i\,j}(\SCS_{i\,j}^{-1}(A)) + \lambda \SCS_{i\,j}(Y))\\
= \det_{n\,k} (\SCS_{i\,j}(\SCS_{i\,j}^{-1}(A) + \lambda Y)) = \det_{n\,k} (\SCS_{i\,j}^{-1}(A) + \lambda Y)\\
= \det_{n\,k} (\SCS_{i\,j}^{-1}(A)) = \det_{n\,k}(A)
\end{multline*}
for all $A \in \M_{n\,k}(\F)$ and $\lambda \in \F$.

Therefore, $\SCS_{i\,j}(Y)$ satisfies the conditions of Lemma~\ref{lem:NKEvenAdditionPreserverHasZero11Odd}. It implies that
\begin{equation}\label{lem:NKEvenAdditionPreserverIsZeroOdd:eq2}
\left(\SCS_{i\,j}(Y)\right)_{1\,1} - \left(\SCS_{i\,j}(Y)\right)_{2\,1}  = 0.
\end{equation}
By substituting~\eqref{lem:NKEvenAdditionPreserverIsZeroOdd:eq1} to \eqref{lem:NKEvenAdditionPreserverIsZeroOdd:eq2} we obtain that
\[
(-1)^{i - \delta_{1\,j}}\left(y_{i\,j} - y_{(i+1)\,j}\right) = \left(\SCS_{i\,j}(Y)\right)_{1\,1} - \left(\SCS_{i\,j}(Y)\right)_{2\,1}= 0
\]
and consequently
\[
y_{i\,j} - y_{(i+1)\,j} = 0.
\]
This equality holds for all  $1 \le i \le n$ and $1 \le j \le k$. Therefore, $Y \in W_{n\,k}$, and consequently the inclusion~\eqref{lem:NKOddRadDetNK:eq1} holds.
\end{proof}
%
%\begin{lemma}\label{lem:NKOddKernel}Assume that $|\F| > k$, $n \ge k$ and $n + k$ is odd. If $T \colon \M_{n\,k}(\F)$ is a linear map such that $\det_{n\,k}(T(X)) = \det_{n\,k}(X)$, then $\ker (T) \subseteq W_{n\,k}$.
%\end{lemma}
%\begin{proof}Indeed, since $T$ preserves $\det_{n\,k}$,
%\begin{equation}\label{lem:NKOddKernel:eq1}
%\ker(T) \subseteq \rad \left(\det_{n\,k}\right)
%\end{equation}
%by Lemma~\ref{lem:KernelContainedInRad}. Therefore,
%\[
%\ker(T) \overset{\eqref{lem:NKOddKernel:eq1}}{\subseteq} \rad \left(\det_{n\,k}\right) \overset{\eqref{lem:NKOddRadDetNK:eqq}}{=\joinrel=\joinrel=} W_{n\,k}.
%\]
%\end{proof}

\section{The proof of the main theorem}\label{sec:maintheorem}
In this section we describe a relationship between linear $\det_{n\,k}$-preservers and $\det_{(n-1)\,k}$-preservers. This relationship allows us to reduce the linear preserver problem for $\det_{n\,k}$ to the linear preserver problem for $\det_{(n-1)\,k}$ as it is done in Lemma~\ref{lem:FromNKOddToNKEvenIfKerW}. In conclusion we obtain a proof of the main theorem of this paper.

The relationship is established through the space $\M^0_{n\,k}(\F)$ defined below which is possible to identify with $\M_{(n-1)\,k}(\F)$ by the linear maps converting $\det_{n\,k}$ to $\det_{n-1\,k}$ and vice versa. These linear maps are defined in Definition~\ref{def:Lminus} and Definition~\ref{def:Lplus}, respectively.

\begin{definition}
  By $\M^{0}_{n\,k}(\F) \subseteq \M_{n\,k}(\F)$ we denote a linear space of all matrices such that their last row equal to zero, i.e.
  \[
    \M^{0}_{n\,k}(\F) = \{X \in \M_{n\,k}(\F) \mid x_{n\,j} = 0, 1 \le j \le k\}.
  \]
\end{definition}

\begin{definition}\label{def:Lminus}
For every matrix $X \in \M_{n\,k}(\mathbb F)$ by $L^{-}(X) \in \M_{n-1\,k} (\mathbb F)$ we denote the matrix defined by
\[
L^{-}(X) = \begin{pmatrix}X[1|) - X[n|)\\ \vdots \\X[n-1|) - X[n|)\end{pmatrix}.
\]

\end{definition}

\begin{lemma}\label{claim:LminusMatrixForm}$L^{-}(X) = M^{-}X$ for all $X \in \M_{n\,k}(\F)$, where
\[
M^{-} =
\begin{pmatrix}
1 & 0 & \ldots & 0 & -1\\
0 & 1 & \ldots & 0 & -1\\
\vdots & \vdots & \ddots & \vdots & \vdots\\
0 & 0 & \ldots & 1 & -1
\end{pmatrix}
\in \M_{n-1\,n}(\F).
\]
  \end{lemma}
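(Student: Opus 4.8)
The statement is a routine verification of a matrix identity, and the natural approach is to compare the two sides row by row (equivalently, entry by entry). First I would record that the $i$-th row of $M^{-}$, for $1 \le i \le n-1$, is exactly $e_i - e_n$, where $e_\ell \in \F^{1\times n}$ denotes the $\ell$-th standard basis row vector; this is immediate from the displayed form of $M^{-}$, since its $(i,\ell)$ entry is $\delta_{i\,\ell} - \delta_{\ell\,n}$.

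Next I would compute the $i$-th row of the product $M^{-}X$. By the definition of matrix multiplication, the $i$-th row of $M^{-}X$ equals (the $i$-th row of $M^{-}$) times $X$, i.e.
\[
(M^{-}X)[i|) = (e_i - e_n)\,X = e_i X - e_n X = X[i|) - X[n|),
\]
using linearity and the fact that $e_\ell X = X[\ell|)$. If one prefers to work with entries instead, the same conclusion follows from
\[
(M^{-}X)_{i\,j} = \sum_{\ell=1}^{n} (M^{-})_{i\,\ell}\, x_{\ell\,j} = \sum_{\ell=1}^{n} \bigl(\delta_{i\,\ell} - \delta_{\ell\,n}\bigr) x_{\ell\,j} = x_{i\,j} - x_{n\,j}
\]
for all $1 \le i \le n-1$ and $1 \le j \le k$.

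Finally I would compare with Definition~\ref{def:Lminus}: the $i$-th row of $L^{-}(X)$ is by definition $X[i|) - X[n|)$ for each $1 \le i \le n-1$, which coincides with the expression obtained above for the $i$-th row of $M^{-}X$. Since all $n-1$ rows agree, $L^{-}(X) = M^{-}X$. I do not anticipate any real obstacle here; the only points worth stating carefully are the size bookkeeping ($M^{-} \in \M_{n-1\,n}(\F)$, so $M^{-}X \in \M_{n-1\,k}(\F)$, matching the codomain of $L^{-}$) and the identification of the rows of $M^{-}$ with $e_i - e_n$.
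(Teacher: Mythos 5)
Your proof is correct and follows essentially the same route as the paper's: a direct computation of the product $M^{-}X$ showing that its $i$-th row is $X[i|) - X[n|)$, which is the definition of $L^{-}(X)$. The only cosmetic difference is that you organize the calculation via the rows $e_i - e_n$ of $M^{-}$ and Kronecker deltas, whereas the paper simply writes out the full matrix product.
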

\begin{proof}It could be shown by direct computation. Indeed, if $X = (x_{i\,j}) \in \M_{n\,k}(\F),$ then
\begin{multline*}
M^{-}X = \begin{pmatrix}
1 & 0 & \ldots & 0 & -1\\
0 & 1 & \ldots & 0 & -1\\
\vdots & \vdots & \ddots & \vdots & \vdots\\
0 & 0 & \ldots & 1 & -1
\end{pmatrix}
\begin{pmatrix}
x_{1\,1} & \cdots & x_{1\,k}\\
\vdots & \ddots & \vdots\\
x_{(n-1)\,1} & \cdots & x_{(n-1)\,k}\\
x_{n\,1} & \cdots & x_{n\,k}\\
\end{pmatrix}\\ = \begin{pmatrix}
x_{1\,1} - x_{n\,1} & \cdots & x_{1\,k} - x_{n\,k}\\
\vdots & \ddots & \vdots\\
x_{(n-1)\,1} - x_{n\,1} & \cdots & x_{(n-1)\,k} - x_{n\,k}\\
\end{pmatrix} = \begin{pmatrix}X[1|) - X[n|)\\ \vdots \\X[n-1|) - X[n|)\end{pmatrix}\\
 = L^{-}(X).
\end{multline*}
\end{proof}

\begin{lemma}\label{lem:DetNKLMinus}Assume that $n \ge k \ge 1$ and $n + k$ is odd. Then $\det_{n\,k}(X) = \det_{(n-1)\,k}(L^{-}(X))$ for all $X \in \M_{n\,k}(\F)$.
  \end{lemma}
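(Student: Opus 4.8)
The plan is to prove the identity $\det_{n\,k}(X) = \det_{(n-1)\,k}(L^{-}(X))$ by reducing the left-hand side to a form where one row is zero and then deleting that row. First I would use Lemma~\ref{lem:AdditionNPLusKOdd} (addition of a matrix in $W_{n\,k}$ does not change $\det_{n\,k}$ when $n+k$ is odd): subtract from $X$ the matrix $Y \in W_{n\,k}$ all of whose rows equal the last row $X[n|)$ of $X$. This yields
\[
\det_{n\,k}(X) = \det_{n\,k}(X - Y) = \det_{n\,k}\begin{pmatrix}X[1|) - X[n|)\\ \vdots \\ X[n-1|) - X[n|)\\ 0\end{pmatrix}.
\]
Now the last row of the resulting $n\times k$ matrix is zero, so Lemma~\ref{lem:AMIRILEMMA32} lets me strike it out without changing the value, giving exactly $\det_{(n-1)\,k}(L^{-}(X))$ by Definition~\ref{def:Lminus}.

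The key steps in order: (i) observe $Y\in W_{n\,k}$ where $Y$ has every row equal to $X[n|)$; (ii) apply Lemma~\ref{lem:AdditionNPLusKOdd} with $A = X - Y$ and this $X$ replaced by $-Y$ (or phrase it as $\det_{n\,k}((X-Y)+Y)=\det_{n\,k}(X-Y)$) to get $\det_{n\,k}(X)=\det_{n\,k}(X-Y)$; (iii) identify $X-Y$ as the matrix obtained by subtracting $X[n|)$ from every row, so its bottom row vanishes and its top $n-1$ rows are precisely the rows of $L^{-}(X)$; (iv) invoke Lemma~\ref{lem:AMIRILEMMA32} to delete the zero last row, landing on $\det_{(n-1)\,k}(L^{-}(X))$.

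I do not expect a serious obstacle here — the statement is essentially a bookkeeping combination of two lemmas already in hand. The only point requiring a little care is the hypothesis of Lemma~\ref{lem:AdditionNPLusKOdd}, which is stated for $1 \le k < n$ (strict inequality) and $k+n$ odd; since the current lemma allows $n \ge k$, I should note that when $n = k$ the parity condition $n+k$ odd is impossible, so in fact $n > k$ automatically and the hypothesis applies. Similarly Lemma~\ref{lem:AMIRILEMMA32} is stated for $n \ge k \ge 1$ with the larger matrix of size $(n+1)\times k$, which matches our situation after relabeling. With those trivialities checked, the chain of equalities closes the proof.
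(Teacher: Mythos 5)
Your proof is correct and follows essentially the same route as the paper: apply Lemma~\ref{lem:AdditionNPLusKOdd} to subtract the last row from every row (producing a zero last row), then delete that row via Lemma~\ref{lem:AMIRILEMMA32}. Your side remarks on the hypotheses (that $n+k$ odd forces $n>k$, so the strict inequality in Lemma~\ref{lem:AdditionNPLusKOdd} and the size condition for Lemma~\ref{lem:AMIRILEMMA32} are automatic) are accurate and slightly more careful than the paper's own write-up.
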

  \begin{proof}Lemma~\ref{lem:AdditionNPLusKOdd} implies that $\det_{n\,k} (X) = \det_{n\,k}(X')$, where
  \[
  X' = \begin{pmatrix}X[1|) - X[n|)\\ \vdots \\X[n-1|) - X[n|)\\ 0\end{pmatrix}.
  \]
  Then $\det_{n\,k} (X') = \det_{n-1\,k} (L^{-}(X))$ by Lemma~\ref{lem:AMIRILEMMA32} because the last row of $X'$ is zero and $X'[1,\ldots, n-1|) = L^{-}(X)$.
\end{proof}

\begin{definition}\label{def:Lplus}
For every matrix $X \in \M_{n-1\,k}(\mathbb F)$ by $L^{+}(X) \in \M^0_{n\,k} (\mathbb F)$ we denote the matrix obtained from $X$ by adjoining to it the zero row as the last row; that is
\[
L^{+}(X) = \begin{pmatrix}X\\ 0
\end{pmatrix}.
\]
\end{definition}

\begin{lemma}\label{claim:LplusMatrixForm}$L^{+}(X) = M^{+}X$ for all $X \in \M_{n-1\,k}(\F)$, where
\[
M^{+} =
\begin{pmatrix}
1 & 0 & \ldots & 0\\
0 & 1 & \ldots & 0\\
\vdots & \vdots & \ddots & \vdots\\
0 & 0 & \ldots & 1\\
0 & 0 & \ldots & 0
\end{pmatrix}
\in \M_{n\,n-1}(\F).
\]
  \end{lemma}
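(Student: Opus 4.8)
\begin{proof}This follows by a direct computation analogous to the one in the proof of Lemma~\ref{claim:LminusMatrixForm}. Writing $X = (x_{i\,j}) \in \M_{n-1\,k}(\F)$, the matrix $M^{+}$ consists of the $(n-1)\times(n-1)$ identity block on top of a zero row, so
\begin{multline*}
M^{+}X =
\begin{pmatrix}
1 & 0 & \ldots & 0\\
0 & 1 & \ldots & 0\\
\vdots & \vdots & \ddots & \vdots\\
0 & 0 & \ldots & 1\\
0 & 0 & \ldots & 0
\end{pmatrix}
\begin{pmatrix}
x_{1\,1} & \cdots & x_{1\,k}\\
\vdots & \ddots & \vdots\\
x_{(n-1)\,1} & \cdots & x_{(n-1)\,k}
\end{pmatrix}\\
= \begin{pmatrix}
x_{1\,1} & \cdots & x_{1\,k}\\
\vdots & \ddots & \vdots\\
x_{(n-1)\,1} & \cdots & x_{(n-1)\,k}\\
0 & \cdots & 0
\end{pmatrix}
= \begin{pmatrix}X\\ 0\end{pmatrix} = L^{+}(X),
\end{multline*}
where the last equality is the definition of $L^{+}$ (Definition~\ref{def:Lplus}).
\end{proof}

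\medskip

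\noindent\emph{Remark on the approach.} The plan is simply to verify the claimed factorization by carrying out the block matrix multiplication explicitly, exactly as was done for $L^{-}$ in Lemma~\ref{claim:LminusMatrixForm}: the first $n-1$ rows of $M^{+}$ form an identity block, which reproduces $X$, and the last row is zero, which produces the appended zero row; comparing with Definition~\ref{def:Lplus} finishes the argument. There is no substantive obstacle here — the only thing to be careful about is bookkeeping of the matrix dimensions ($M^{+} \in \M_{n\,(n-1)}(\F)$ so that $M^{+}X \in \M_{n\,k}(\F) \supseteq \M^{0}_{n\,k}(\F)$), and noting that the image indeed lands in $\M^{0}_{n\,k}(\F)$ as asserted in Definition~\ref{def:Lplus}.
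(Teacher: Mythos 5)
Your proof is correct and matches the paper's intent exactly: the paper's own proof simply states that the result follows by a direct computation analogous to Lemma~\ref{claim:LminusMatrixForm}, and you have carried out precisely that computation. Nothing is missing.
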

  \begin{proof}The proof could be done by the direct computation and similar to the proof of Lemma~\ref{claim:LminusMatrixForm}.
  \end{proof}

\begin{lemma}\label{lem:DetNKLPlus}$\det_{(n-1)\,k}(Y) = \det_{n\,k}(L^{+}(Y))$ for all $Y \in \M_{n-1\,k}(\F)$.
  \end{lemma}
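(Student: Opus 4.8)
The plan is to reduce the statement to Lemma~\ref{lem:DetNKLMinus}, which has already been proved. Observe that $L^{+}(Y) \in \M^{0}_{n\,k}(\F)$ by construction, so its last row is the zero row; in particular, subtracting the last row from each of the first $n-1$ rows leaves those rows unchanged. Concretely, I would compute $L^{-}(L^{+}(Y))$ directly from Definition~\ref{def:Lminus}: the $i$-th row of $L^{-}(L^{+}(Y))$ is $L^{+}(Y)[i|) - L^{+}(Y)[n|) = L^{+}(Y)[i|) - 0 = Y[i|)$ for $1 \le i \le n-1$, since the first $n-1$ rows of $L^{+}(Y)$ are exactly the rows of $Y$. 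Hence $L^{-}(L^{+}(Y)) = Y$.

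With this identity in hand, the proof is a one-line chain. Apply Lemma~\ref{lem:DetNKLMinus} to the matrix $X = L^{+}(Y) \in \M_{n\,k}(\F)$ (its hypotheses $n \ge k \ge 1$ and $n+k$ odd are the standing assumptions): this gives $\det_{n\,k}(L^{+}(Y)) = \det_{(n-1)\,k}(L^{-}(L^{+}(Y)))$. Then substitute $L^{-}(L^{+}(Y)) = Y$ to conclude $\det_{n\,k}(L^{+}(Y)) = \det_{(n-1)\,k}(Y)$, which is the claimed equality (read in the other direction).

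Alternatively, and perhaps cleaner, one can avoid invoking $L^{-}$ entirely: by Definition~\ref{def:Lplus}, $L^{+}(Y)$ is simply $Y$ with a zero row appended as row $n$, so Lemma~\ref{lem:AMIRILEMMA32} applies verbatim (with $Y$ in the role of the $n\times k$-block there, noting that $(n-1)+k$ being even is irrelevant since Lemma~\ref{lem:AMIRILEMMA32} holds for all $n \ge k \ge 1$) and yields $\det_{n\,k}(L^{+}(Y)) = \det_{(n-1)\,k}(Y)$ immediately.

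There is essentially no obstacle here: the lemma is a bookkeeping statement packaging the interaction of the embedding $L^{+}$ with the Cullis determinant, and both routes above are a two- or three-step unwinding of definitions plus a single previously established lemma. The only point requiring the slightest care is matching the row/column index conventions of Lemma~\ref{lem:AMIRILEMMA32} (where the appended zero row turns an $n\times k$ matrix into an $(n+1)\times k$ matrix) with the present indexing (where $L^{+}$ turns an $(n-1)\times k$ matrix into an $n\times k$ matrix), i.e. a shift of $n$ by one; this is purely notational.
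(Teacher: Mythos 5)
Your first route is exactly the paper's proof: it establishes $L^{-}(L^{+}(Y)) = Y$ (since the appended last row is zero) and then applies Lemma~\ref{lem:DetNKLMinus} to $X = L^{+}(Y)$. Your alternative via Lemma~\ref{lem:AMIRILEMMA32} is also correct and is in fact slightly stronger, since it avoids the parity hypothesis on $n+k$ that Lemma~\ref{lem:DetNKLMinus} carries; either argument is complete.
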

\begin{proof}
Let us note first that on the one hand, $L^{-}(L^{+}(Y)) = Y$ because the last row of $L^{+}(Y)$ is zero. On the other hand,
\[
\det_{n\,k}(L^{+}(Y)) = \det_{n-1\,k}(L^{-}(L^{+}(Y)))
\]
by Lemma~\ref{lem:DetNKLMinus}. Therefore, \[
\det_{(n-1)\,k}(Y) = \det_{(n-1)\,k}(L^{-}(L^{+}(Y))) = \det_{n\,k}(L^{+}(Y)).
\]
\end{proof}

Maps $L^{+}$ and $L^{-}$  provide a correspondence between linear maps preserving $\det_{n\,k}$ and linear map preserving $\det_{(n-1)\,k}$ as it is stated in the next two lemmas.

\begin{lemma}\label{lem:LiftDownDetPres}Assume that  $|\F| > k$, $n > k$ and $n + k$ is odd. Let $T$ be a linear map on $\M_{n\, k} (\F)$ such that $\det_{n\, k} (T(X)) = \det_{n\,k}(X)$ for all $X \in \M_{n\,k}(\F)$. If a linear map $S$ on $\M_{(n-1)\,k}(\F)$ is defined by 
$$S = L^{-} \circ T \circ L^{+},$$
then $\det_{(n-1)\, k} (S(Y)) = \det_{(n-1)\,k}(Y)$ for all $Y \in \M_{(n-1)\,k}(\F)$.
\end{lemma}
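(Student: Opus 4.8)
The plan is to chase the definitions and use the three key lemmas relating $L^-$, $L^+$, and the Cullis determinants. The statement asserts that if $T$ preserves $\det_{n\,k}$, then $S = L^- \circ T \circ L^+$ preserves $\det_{(n-1)\,k}$. The natural approach is to take an arbitrary $Y \in \M_{(n-1)\,k}(\F)$ and compute $\det_{(n-1)\,k}(S(Y))$ by unwinding the composition step by step, invoking Lemma~\ref{lem:DetNKLPlus} to pass from $\det_{(n-1)\,k}$ of something to $\det_{n\,k}$ of its $L^+$-image, then the hypothesis on $T$, then Lemma~\ref{lem:DetNKLMinus} to come back down via $L^-$.

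Concretely, I would write the chain
\[
\det_{(n-1)\,k}(S(Y)) = \det_{(n-1)\,k}\bigl(L^-(T(L^+(Y)))\bigr) = \det_{n\,k}\bigl(T(L^+(Y))\bigr) = \det_{n\,k}\bigl(L^+(Y)\bigr) = \det_{(n-1)\,k}(Y),
\]
where the second equality is Lemma~\ref{lem:DetNKLMinus} applied to the matrix $T(L^+(Y)) \in \M_{n\,k}(\F)$, the third equality is the hypothesis that $T$ preserves $\det_{n\,k}$, and the fourth equality is Lemma~\ref{lem:DetNKLPlus} applied to $Y$. Since $Y$ was arbitrary, this establishes that $S$ preserves $\det_{(n-1)\,k}$. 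One should double-check that the parity and size hypotheses needed by Lemmas~\ref{lem:DetNKLMinus} and~\ref{lem:DetNKLPlus} are available: Lemma~\ref{lem:DetNKLMinus} requires $n \ge k \ge 1$ and $n+k$ odd, both of which follow from the hypotheses $n > k$ and $n+k$ odd; Lemma~\ref{lem:DetNKLPlus} has no parity restriction.

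Honestly, there is no real obstacle here — the lemma is essentially a formal consequence of the fact that $L^-$ and $L^+$ are (one-sided) inverse "intertwiners" of $\det_{n\,k}$ and $\det_{(n-1)\,k}$, and the proof is a four-step equality chain. The only thing requiring a moment's care is making sure $S$ is genuinely a linear map on $\M_{(n-1)\,k}(\F)$ (it is, being a composition of linear maps, with $L^+ \colon \M_{(n-1)\,k}(\F) \to \M^0_{n\,k}(\F) \subseteq \M_{n\,k}(\F)$, then $T \colon \M_{n\,k}(\F) \to \M_{n\,k}(\F)$, then $L^- \colon \M_{n\,k}(\F) \to \M_{(n-1)\,k}(\F)$), and that the intermediate object $T(L^+(Y))$ lies in $\M_{n\,k}(\F)$ so that Lemma~\ref{lem:DetNKLMinus} applies to it — it does not need to lie in $\M^0_{n\,k}(\F)$, which is the point of stating Lemma~\ref{lem:DetNKLMinus} for all of $\M_{n\,k}(\F)$ rather than only for matrices with zero last row.
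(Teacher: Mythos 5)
Your proof is correct and is essentially identical to the paper's: both unwind $S=L^-\circ T\circ L^+$ and chain together Lemma~\ref{lem:DetNKLPlus}, the preservation hypothesis on $T$, and Lemma~\ref{lem:DetNKLMinus}, differing only in presentation (a single equality chain versus three displayed equalities combined at the end). Nothing is missing.
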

\begin{proof}Let $Y \in \M_{(n-1)\,k}(\F)$. Then
\begin{equation}\label{lem:LiftDownDetPres:eq1}
\det_{n\,k}(L^{+}(Y)) = \det_{(n-1)\,k}(Y)
\end{equation}
 by Lemma~\ref{lem:DetNKLPlus}. Also %because $L^{-}(L^{+}(Y)) = Y$. Next,
\begin{equation}\label{lem:LiftDownDetPres:eq2}
\det_{n\,k}(T(L^{+}(Y))) = \det_{n\,k}(L^{+}(Y))
\end{equation}
because $T$ preserves $\det_{n\,k}$.

Lemma~\ref{lem:DetNKLMinus} implies that
\begin{equation}\label{lem:LiftDownDetPres:eq3}
\det_{n\,k}(T(L^{+}(Y))) = \det_{(n-1)\,k}(L^{-}(T(L^{+}(Y))).
\end{equation} The equalities~\eqref{lem:LiftDownDetPres:eq1}, \eqref{lem:LiftDownDetPres:eq2} and~\eqref{lem:LiftDownDetPres:eq3} aligned together imply that
\[
\det_{n\,k}(Y) = \det_{n\,k}(L^{-}(T(L^{+}(Y)))\;\;\mbox{for all}\;\; Y \in \M_{(n-1)\,k}(\F).
\]
Since $ L^{-} \circ T \circ L^{+} = S$ by the definition of $S$, we conclude that $S$ preserves $\det_{(n-1)\,k}$.
\end{proof}

\begin{lemma}\label{lem:LiftUpDetPres}Assume that  $|\F| > k$, $n > k$ and $n + k$ is odd. Let $S$ be a linear map on $\M_{(n-1)\, k} (\F)$ such that $\det_{(n-1)\, k} (S(Y)) = \det_{(n-1)\,k}(X)$ for all $Y \in \M_{(n-1)\,k}(\F)$. If a linear map $T$ on $\M_{n\,k}(\F)$ is defined by $$T = L^{+} \circ S \circ L^{+},$$ then $\det_{n\, k} (T(X)) = \det_{n\,k}(X)$ for all $X \in \M_{n\,k}(\F)$
\end{lemma}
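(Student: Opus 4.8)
The plan is to chain together the three ``change of size'' identities that have already been established, exactly as in the proof of Lemma~\ref{lem:LiftDownDetPres} but read in the opposite direction. First I would observe that for the displayed composition to typecheck one must read $T = L^{+}\circ S\circ L^{-}$: indeed $L^{-}\colon \M_{n\,k}(\F)\to\M_{(n-1)\,k}(\F)$, then $S$ acts on $\M_{(n-1)\,k}(\F)$, and then $L^{+}\colon \M_{(n-1)\,k}(\F)\to\M^{0}_{n\,k}(\F)\subseteq\M_{n\,k}(\F)$, so that $T$ is a well-defined linear map on $\M_{n\,k}(\F)$ whose image lies in $\M^{0}_{n\,k}(\F)$.

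Next I would fix an arbitrary $X\in\M_{n\,k}(\F)$ and run the computation. By Lemma~\ref{lem:DetNKLMinus}, $\det_{n\,k}(X)=\det_{(n-1)\,k}(L^{-}(X))$. Since $S$ preserves $\det_{(n-1)\,k}$ by hypothesis, $\det_{(n-1)\,k}(L^{-}(X))=\det_{(n-1)\,k}(S(L^{-}(X)))$. Finally, applying Lemma~\ref{lem:DetNKLPlus} with the $(n-1)\times k$ matrix $S(L^{-}(X))$ in place of $Y$ gives $\det_{(n-1)\,k}(S(L^{-}(X)))=\det_{n\,k}(L^{+}(S(L^{-}(X))))=\det_{n\,k}(T(X))$. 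Concatenating these three equalities yields $\det_{n\,k}(T(X))=\det_{n\,k}(X)$, which is the claim since $X$ was arbitrary.

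I do not expect any genuine obstacle: the statement is a formal consequence of Lemmas~\ref{lem:DetNKLMinus} and~\ref{lem:DetNKLPlus} together with the assumption that $S$ preserves $\det_{(n-1)\,k}$, and the hypotheses $|\F|>k$, $n>k$, and $n+k$ odd enter only through those two lemmas (the parity is what makes $\det_{n\,k}$ and $\det_{(n-1)\,k}$ correspond under $L^{\pm}$). The one point worth stating explicitly is that $L^{+}$ is applied to $S(L^{-}(X))$ rather than to $X$, so that $T(X)\in\M^{0}_{n\,k}(\F)$; no use of the radical or of the main structural theorem is needed here, and the proof is essentially the mirror image of the proof of Lemma~\ref{lem:LiftDownDetPres}.
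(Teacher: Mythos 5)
Your proof is correct and follows essentially the same chain of equalities as the paper's own proof: Lemma~\ref{lem:DetNKLMinus}, the preservation hypothesis on $S$, then Lemma~\ref{lem:DetNKLPlus}. Your reading of the displayed composition as $T = L^{+}\circ S\circ L^{-}$ (correcting an evident typo in the statement) is also exactly what the paper's proof uses.
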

\begin{proof}The proof is similar to the proof of the previous lemma. Let $X \in \M_{n\,k}(\F)$. Then
\begin{equation}\label{lem:LiftUpDetPres:eq1}
\det_{(n-1)\,k}(L^{-}(X)) = \det_{n\,k}(X)
\end{equation}
 by Lemma~\ref{lem:DetNKLMinus}. Also %because $L^{-}(L^{+}(Y)) = Y$. Next,
\begin{equation}\label{lem:LiftUpDetPres:eq2}
\det_{(n-1)\,k}(S(L^{-}(X))) = \det_{(n-1)\,k}(L^{-}(X))
\end{equation}
because $S$ preserves $\det_{(n-1)\,k}$.

Lemma~\ref{lem:DetNKLPlus} implies that
\begin{equation}\label{lem:LiftUpDetPres:eq3}
\det_{(n-1)\,k}(S(L^{-}(X))) = \det_{n\,k}(L^{+}(S(L^{-}(X))).
\end{equation} The equalities~\eqref{lem:LiftUpDetPres:eq1}, \eqref{lem:LiftUpDetPres:eq2} and~\eqref{lem:LiftUpDetPres:eq3} aligned together imply that
\[
\det_{n\,k}(X) = \det_{n\,k}(L^{+}(S(L^{-}(X)))\;\;\mbox{for all}\;\; X \in \M_{n\,k}(\F)
\]
Since $ L^{+} \circ S \circ L^{+} = T$ by the definition of $T$, we conclude that $T$ preserves $\det_{n\,k}$.
\end{proof}

\begin{lemma}\label{lem:FromNKOddToNKEvenIfKerW}Assume that  $|\F| > k$, $n > k$ and $n + k$ is odd. Let $T\colon \M_{n\, k} (\F) \to \M_{n\, k} (\F)$ be a linear map such that $\det_{n\, k} (T(X)) = \det_{n\,k}(X)$ and $\Img(T) \subseteq \M^0_{n\,k}(\F)$. Then the following statements hold.
\begin{enumerate}[label=(\alph*), ref=(\alph*)]
\item\label{lem:FromNKOddToNKEvenIfKerW:part4} $\Ker(T) = \rad(\det_{n\,k})$.
\item\label{lem:FromNKOddToNKEvenIfKerW:part5} $L^{+}\circ L^{-} \circ T = T$.
\item\label{lem:FromNKOddToNKEvenIfKerW:part6} $T \circ L^{+} \circ L^{-} = T$.
\item\label{lem:FromNKOddToNKEvenIfKerW:part2} If a linear map $S$ is defined by 
\begin{equation}\label{lem:FromNKOddToNKEvenIfKerW:eq}
S = L^{-} \circ T \circ L^{+},
\end{equation}
then $T = L^{+} \circ S \circ L^{-}$.
\end{enumerate}
\end{lemma}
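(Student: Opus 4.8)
The core of this lemma is to extract enough structure from the two hypotheses --- that $T$ preserves $\det_{n\,k}$ and that $\Img(T) = \M^0_{n\,k}(\F)$ --- to pin down $\Ker(T)$ exactly and then to show that the ``collapse then re-embed'' operators $L^+\circ L^-$ act as identities on the relevant spaces. I would organize the proof around part~\ref{lem:FromNKOddToNKEvenIfKerW:part4} first, since parts~\ref{lem:FromNKOddToNKEvenIfKerW:part5}--\ref{lem:FromNKOddToNKEvenIfKerW:part2} will follow from it more or less formally together with the basic properties of $L^+$ and $L^-$ already established in the section.

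\emph{Part~\ref{lem:FromNKOddToNKEvenIfKerW:part4}.} One inclusion is free: by Lemma~\ref{lem:KernelContainedInRad}, any linear map preserving $\det_{n\,k}$ satisfies $\Ker(T)\subseteq \rad(\det_{n\,k})$, and $\rad(\det_{n\,k}) = W_{n\,k}$ by Lemma~\ref{lem:NKOddRadDetNK}. For the reverse inclusion I would use a dimension count. Since $\Img(T) = \M^0_{n\,k}(\F)$, the rank--nullity theorem gives $\dim\Ker(T) = nk - (n-1)k = k$. On the other hand $\dim W_{n\,k} = k$ by Definition~\ref{def:WNK}. Combined with $\Ker(T)\subseteq W_{n\,k}$, equality of dimensions forces $\Ker(T) = W_{n\,k} = \rad(\det_{n\,k})$. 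This is the step I expect to carry the most weight, but it is short once the pieces from Section~\ref{sec:kernels} are in hand.

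\emph{Parts~\ref{lem:FromNKOddToNKEvenIfKerW:part5} and~\ref{lem:FromNKOddToNKEvenIfKerW:part6}.} For~\ref{lem:FromNKOddToNKEvenIfKerW:part5}: every value $T(X)$ lies in $\M^0_{n\,k}(\F)$, i.e. has zero last row, so $L^-(T(X))$ just deletes that zero row and $L^+$ puts it back; concretely $L^+(L^-(Y)) = Y$ for every $Y\in\M^0_{n\,k}(\F)$ because $L^-(Y)$ records the differences $Y[i|)-Y[n|) = Y[i|)$ when $Y[n|)=0$, and re-adjoining a zero row recovers $Y$. Applying this with $Y = T(X)$ gives $L^+\circ L^-\circ T = T$. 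For~\ref{lem:FromNKOddToNKEvenIfKerW:part6}: the map $L^+\circ L^-$ acts on an arbitrary $X\in\M_{n\,k}(\F)$ by subtracting the last row from every row and then zeroing the last row; equivalently $X\mapsto X + \Phi(X)$ where $\Phi(X)\in W_{n\,k}$ is minus the matrix all of whose rows equal $X[n|)$. Hence $X - L^+(L^-(X)) \in W_{n\,k} = \Ker(T)$ by part~\ref{lem:FromNKOddToNKEvenIfKerW:part4}, so $T(L^+(L^-(X))) = T(X)$, i.e. $T\circ L^+\circ L^- = T$.

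\emph{Part~\ref{lem:FromNKOddToNKEvenIfKerW:part2}.} With $S = L^-\circ T\circ L^+$ I compute directly:
\[
L^+\circ S\circ L^- = L^+\circ (L^-\circ T\circ L^+)\circ L^- = (L^+\circ L^-)\circ T\circ (L^+\circ L^-) = T,
\]
where the last equality applies part~\ref{lem:FromNKOddToNKEvenIfKerW:part5} (for the left factor, after noting $(L^+\circ L^-)\circ T = T$) and part~\ref{lem:FromNKOddToNKEvenIfKerW:part6} (for the right factor). So the only genuine content is the kernel identification in~\ref{lem:FromNKOddToNKEvenIfKerW:part4}; everything else is bookkeeping with the explicit forms of $L^\pm$ from Definitions~\ref{def:Lminus} and~\ref{def:Lplus}.
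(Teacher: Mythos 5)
Your proof is correct and follows essentially the same route as the paper: parts (b), (c), (d) are identical, resting on the observation that $L^+\circ L^-$ fixes $\M^0_{n\,k}(\F)$ pointwise and perturbs a general $X$ only by an element of $W_{n\,k}$. The one genuine (but minor) divergence is in part (a): the paper proves the inclusion $\rad(\det_{n\,k})\subseteq\Ker(T)$ directly, using Lemma~\ref{lem:RadicalInvariant}\ref{lem:RadicalInvariant:part1} to get $T(\rad(\det_{n\,k}))\subseteq\Img(T)\cap\rad(\det_{n\,k})=\M^0_{n\,k}(\F)\cap W_{n\,k}=\{0\}$, whereas you prove only $\Ker(T)\subseteq W_{n\,k}$ and close the gap by rank--nullity, $\dim\Ker(T)=nk-(n-1)k=k=\dim W_{n\,k}$. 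Both arguments are valid and of comparable length; yours trades the appeal to Waterhouse's invariance of the radical for an explicit dimension count, while the paper's version avoids computing any dimensions.
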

\begin{proof}~\paragraph{\ref{lem:FromNKOddToNKEvenIfKerW:part4}} The obvious equality $\M^{0}_{n\,k}(\F) \cap \rad(\det_{n\,k}) = \{0\}$ implies that\linebreak $T(\rad(\det_{n\,k})) = \{0\}$ because 
$$T(\rad(\det_{n\,k})) \subseteq \Img(T) \subseteq \M^{0}_{n\,k}(\F)$$
 and $T(\rad(\det_{n\,k})) \subseteq \rad(\det_{n\,k})$ by Lemma~\ref{lem:RadicalInvariant}\ref{lem:RadicalInvariant:part1}.

Therefore, $\Ker(T) \supseteq \rad(\det_{n\,k})$. Since $\Ker(T) \subseteq \rad(\det_{n\,k})$ by Lemma~\ref{lem:KernelContainedInRad}, then we conclude that $\Ker(T) = \rad(\det_{n\,k})$.

\paragraph{\ref{lem:FromNKOddToNKEvenIfKerW:part5}} % follows from the fact that $\Img(T) = \M^0_{n\,k}$. 
Indeed, if $X \in \M_{n\,k}(\F)$, then
\[
T(X) = \begin{pmatrix}(T(X))[1|)\\ \vdots \\(T(X))[n-1|)\\ 0\end{pmatrix}
\]
because $\Img(T) = \M^0_{n\,k}(\F)$. Hence, \begin{multline*}
L^{+}\circ L^{-} \circ T (X) = L^{+}(L^{-}(T(X)))
 = L^{+}(L^{-}(\begin{pmatrix}(T(X))[1|)\\ \vdots \\(T(X))[n-1|)\\ 0\end{pmatrix}))\\
 = L^{+}(\begin{pmatrix}(T(X))[1|)\\ \vdots \\(T(X))[n-1|)\end{pmatrix})
  = \begin{pmatrix}(T(X))[1|)\\ \vdots \\(T(X))[n-1|)\\ 0\end{pmatrix} = T(X).
\end{multline*}
for all $X \in \M_{n\,k}(\F)$.

\paragraph{\ref{lem:FromNKOddToNKEvenIfKerW:part6}} On the one hand,
\begin{multline*}
T \circ L^{+} \circ L^{-} (X) = T(L^{+}(L^{-}(X)))\\
 = T(L^{+}(\begin{pmatrix}X[1|) - X[n|)\\ \vdots \\X[n-1|) - X[n|)\end{pmatrix})) = T(\begin{pmatrix}X[1|) - X[n|)\\ \vdots \\X[n-1|) - X[n|)\\ 0\end{pmatrix})
\end{multline*}
for all $X \in \M_{n\,k}(\F)$. On the other hand,
\begin{equation*}
T(X) = T(\begin{pmatrix}X[1|) - X[n|)\\ \vdots \\X[n-1|) - X[n|)\\ 0\end{pmatrix}) + T(\begin{pmatrix}X[n|)\\ \vdots \\ X[n|)\\ X[n|)\end{pmatrix})\\ = T(\begin{pmatrix}X[1|) - X[n|)\\ \vdots \\X[n-1|) - X[n|)\\ 0\end{pmatrix})
\end{equation*}
since $\begin{pmatrix}X[n|)\\ \vdots \\ X[n|)\\ X[n|)\end{pmatrix} \in W_{n\,k}$ and consequently $T(\begin{pmatrix}X[n|)\\ \vdots \\ X[n|)\\ X[n|)\end{pmatrix}) = 0$ by~\ref{lem:FromNKOddToNKEvenIfKerW:part4}.

\paragraph{\ref{lem:FromNKOddToNKEvenIfKerW:part2}} This part is a direct consequence from~\ref{lem:FromNKOddToNKEvenIfKerW:part5} and~\ref{lem:FromNKOddToNKEvenIfKerW:part6}. Indeed, 
\begin{multline*}
L^{+} \circ S \circ L^{-} \overset{\eqref{lem:FromNKOddToNKEvenIfKerW:eq}}{=\joinrel=} L^{+} \circ \left(L^{-} \circ T \circ L^{+} \right)\circ L^{-}\\
 = \left(L^{+} \circ L^{-} \circ T\right) \circ L^{+} \circ L^{-} \overset{\ref{lem:FromNKOddToNKEvenIfKerW:part5}}{=} T \circ L^{+} \circ L^{-} \overset{\ref{lem:FromNKOddToNKEvenIfKerW:part6}}{=} T.
\end{multline*}
\end{proof}

\begin{lemma}\label{lem:MatRepLift}Assume that  $|\F| > k$, $n > k$ and $n + k$ is odd. Let $S$ be a linear map on $\M_{(n-1)\,k}(\F)$ and $L$ be a linear map on $\M_{n\,k}(\F)$ defined by 
\[
T = L^{+}\circ S \circ L^{-}.
\] If there exist $A' \in \M_{(n-1)\,(n-1)}(\F)$ and $B' \in \M_{k\,k}(\F)$ satisfying the condition~\eqref{eq:TwoSideMul} such that 
\[
S(Y) = A'YB'\;\;\mbox{for all}\;\; Y \in \M_{(n-1)\,k}(\F),
\]
then there exist $A \in \M_{n\, n}(\F)$ and $B \in \M_{k\, k}(\F)$ satisfying the condition~\eqref{eq:TwoSideMul} such that
\[
T(X) = AXB\;\;\mbox{for all}\;\; X \in \M_{n\,k}(\F).
\]
\end{lemma}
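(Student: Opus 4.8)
The plan is to read $A$ and $B$ off from the matrix presentations of $L^{+}$ and $L^{-}$, and then to verify the sign condition~\eqref{eq:TwoSideMul2} indirectly, by chaining the transport lemmas already proved rather than by expanding any minor of $\det_{n\,k}$. First I would make $T$ explicit. By Lemma~\ref{claim:LminusMatrixForm} one has $L^{-}(X) = M^{-}X$ and by Lemma~\ref{claim:LplusMatrixForm} one has $L^{+}(Y) = M^{+}Y$; hence, taking $T = L^{+}\circ S\circ L^{-}$ (as in Lemma~\ref{lem:FromNKOddToNKEvenIfKerW}\ref{lem:FromNKOddToNKEvenIfKerW:part2}) and $S(Y) = A'YB'$,
\[
T(X) = L^{+}\bigl(S(L^{-}(X))\bigr) = M^{+}\bigl(A'(M^{-}X)B'\bigr) = \bigl(M^{+}A'M^{-}\bigr)\,X\,B'
\]
for every $X \in \M_{n\,k}(\F)$. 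So I would set $A := M^{+}A'M^{-} \in \M_{n\,n}(\F)$ and $B := B' \in \M_{k\,k}(\F)$, and then $T(X) = AXB$ for all $X$, which is half of the conclusion.

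For the other half, i.e.\ that this $A$ and $B$ satisfy~\eqref{eq:TwoSideMul2}, I would argue as follows. Since $A'$ and $B'$ satisfy~\eqref{eq:TwoSideMul2}, Lemma~\ref{lem:TwoSidedMulPreservesDet} tells us that $S$, being the map $Y\mapsto A'YB'$, preserves $\det_{(n-1)\,k}$. Then Lemma~\ref{lem:LiftUpDetPres} shows that $T = L^{+}\circ S\circ L^{-}$ preserves $\det_{n\,k}$. But $T(X) = AXB$, so a second application of Lemma~\ref{lem:TwoSidedMulPreservesDet}, this time on $\M_{n\,k}(\F)$, gives $\det_{n\,k}(A(|d])\cdot\det_k(B) = \sgn(d)$ for all $d \in \binom{[n]}{k}$, which is exactly~\eqref{eq:TwoSideMul2}. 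This completes the argument.

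No step here is genuinely hard; the only things to watch are bookkeeping ones: the composition in the statement has to be $L^{+}\circ S\circ L^{-}$ for the sizes to match and for Lemma~\ref{lem:LiftUpDetPres} to be applicable, and one uses that $\sgn(d)$ does not depend on the ambient ground set. If a direct check of~\eqref{eq:TwoSideMul2} were preferred instead, I would note that selecting columns commutes with left multiplication, so $A(|d] = L^{+}\bigl(A'\,(M^{-}(|d])\bigr)$ for every $d\in\binom{[n]}{k}$, whence $\det_{n\,k}(A(|d]) = \det_{(n-1)\,k}(A'\,(M^{-}(|d]))$ by Lemma~\ref{lem:DetNKLPlus}; one then splits into the case $n\notin d$, where $A'\,(M^{-}(|d]) = A'(|d]$ and the hypothesis on $A',B'$ applies verbatim, and the case $n\in d$, where the column of $M^{-}$ indexed by $n$ equals $-\mathbf 1$ and one expands $\det_{(n-1)\,k}$ by multilinearity in that last column. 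This second route works but is longer, so I would write up the indirect argument.
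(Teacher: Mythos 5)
Your proposal is correct and takes essentially the same route as the paper's own proof: set $A = M^{+}A'M^{-}$ and $B = B'$ via Lemmas~\ref{claim:LminusMatrixForm} and~\ref{claim:LplusMatrixForm}, then obtain the sign condition indirectly by observing that $T = L^{+}\circ S\circ L^{-}$ preserves $\det_{n\,k}$ (Lemmas~\ref{lem:TwoSidedMulPreservesDet} and~\ref{lem:LiftUpDetPres}) and applying Lemma~\ref{lem:TwoSidedMulPreservesDet} once more on $\M_{n\,k}(\F)$. You are also right that the composition in the statement must read $L^{+}\circ S\circ L^{-}$ for the dimensions to match, which is exactly what the paper's proof uses.
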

\begin{proof}Since $A'$ and $B'$ satisfy  the condition~\eqref{eq:TwoSideMul}, then $S$ is a linear map preserving $\det_{(n-1)\,k}$ by Lemma~\ref{lem:TwoSidedMulPreservesDet}. Hence $T$ is a linear map preserving $\det_{n\,k}$ by Lemma~\ref{lem:LiftUpDetPres}.

The equality $T = L^{+} \circ S \circ L^{-}$ implies that
\[
T(X) = M^{+}(S(M^{-}X))
\]
for all $X \in \M_{n\,k}(\F)$. Recall there that $M^{+}$ and $M^{-}$ are defined in Lemma~\ref{claim:LminusMatrixForm} and Lemma~\ref{claim:LplusMatrixForm}. Therefore, \[
T(X) = M^{+}(A'(M^{-}X)B') = (M^{+}A'M^{-})XB' = AXB
\]
for all $X \in \M_{n\,k}(\F)$, where $A\in\M_{n\,n}(\F)$ and $B\in \M_{k\,k}(\F)$ are defined by
\[
A = M^{+}A'M^{-}, \quad B = B'.
\]
Since $T$ is a linear map preserving $\det_{n\,k}$ as we have shown above, then $A$ and $B$ satisfy the condition~\eqref{eq:TwoSideMul} by Lemma~\ref{lem:TwoSidedMulPreservesDet}. 
\end{proof}

\begin{theorem}\label{thm:MainTheoremCullisNKOdd}Assume that $|\F| > k \ge 4$, $n \ge k + 2$ and $n + k$ is odd. Let $T\colon \M_{n\,k} (\F) \to \M_{n\,k} (\F)$ be a linear map. Then $\det_{n\, k} (T(X)) = \det_{n\,k}(X)$ for all $X \in \M_{n\,k} (\F)$ if and only if  there exist $A \in \M_{n\, n}(\F)$ and $B \in \M_{k\, k}(\F)$ such that
\begin{equation}\label{thm:MainTheoremCullisNKOdd:eqq}
\det_{n\,k} \Bigl(A(|i_1,\ldots, i_k]\Bigr) \det_k \Bigl(B\Bigr) = (-1)^{i_1 + \ldots + i_k - 1 - \ldots - k}
\end{equation}
for all increasing sequences $1 \le i_1 < \ldots < i_k \le n $ and a linear map\linebreak $\phi\colon \M_{n\,k}(\F) \to W_{n\,k}$ such that
\begin{equation}\label{thm:MainTheoremCullisNKOdd:eq}
T(X) = AXB + \phi(X)
\end{equation}
for all $X \in \M_{n\,k} (\F).$

Here $W_{n\,k} \subseteq \M_{n\,k}(\F)$ is the space of matrices, all rows of which are equal, which is defined in Definition~\ref{def:WNK} and  is a radical of $\det_{n\,k}$ by Lemma~\ref{lem:NKOddRadDetNK}.
\end{theorem}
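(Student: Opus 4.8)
The strategy is to reduce to the already-established even case (Lemma~\ref{lem:MainTheoremEven}) via the maps $L^{+}, L^{-}$, using the radical computation of Section~\ref{sec:kernels}. The ``if'' direction is easy: given $A, B$ satisfying~\eqref{thm:MainTheoremCullisNKOdd:eqq} and $\phi$ with image in $W_{n\,k} = \rad(\det_{n\,k})$, the map $X \mapsto AXB$ preserves $\det_{n\,k}$ by Lemma~\ref{lem:TwoSidedMulPreservesDet}, and adding $\phi(X) \in \rad(\det_{n\,k})$ changes nothing since, by definition of the radical, $\det_{n\,k}(AXB + \phi(X)) = \det_{n\,k}(AXB) = \det_{n\,k}(X)$ (applying the radical property with $\lambda = 1$ and $\mathbf v = AXB$). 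So the substance is the ``only if'' direction.

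\textbf{Only if.} Suppose $T$ preserves $\det_{n\,k}$. First I would replace $T$ by a ``normalized'' map whose image lands in $\M^{0}_{n\,k}(\F)$, so that Lemma~\ref{lem:FromNKOddToNKEvenIfKerW} applies. The natural candidate is $T_0 := L^{+} \circ L^{-} \circ T$: since $L^{+} = M^{+}(\,\cdot\,)$ and $L^{-} = M^{-}(\,\cdot\,)$ are given by left multiplication (Lemmas~\ref{claim:LminusMatrixForm}, \ref{claim:LplusMatrixForm}), we have $T(X) - T_0(X) = (I_n - M^{+}M^{-})T(X)$, and one checks $I_n - M^{+}M^{-}$ is the matrix all of whose rows equal $(0,\dots,0,1)$ acting so that $(I_n - M^{+}M^{-})Y$ has every row equal to $Y[n|)$ — hence $T(X) - T_0(X) \in W_{n\,k}$ for every $X$. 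Therefore $\phi := T - T_0$ is a linear map into $W_{n\,k} = \rad(\det_{n\,k})$, and because $\phi(X) \in \rad(\det_{n\,k})$ we get $\det_{n\,k}(T_0(X)) = \det_{n\,k}(T_0(X) + \phi(X)) = \det_{n\,k}(T(X)) = \det_{n\,k}(X)$, so $T_0$ also preserves $\det_{n\,k}$. Moreover $\Img(T_0) \subseteq \M^{0}_{n\,k}(\F)$ by construction. One should also check $\Img(T_0) = \M^{0}_{n\,k}(\F)$ exactly (not just $\subseteq$): since $T_0$ preserves $\det_{n\,k}$ it is surjective modulo $\rad(\det_{n\,k})$ onto $\M_{n\,k}(\F)/\rad(\det_{n\,k})$ by Lemma~\ref{lem:RadicalInvariant}, and $\M^0_{n\,k}(\F)$ is a complement to $W_{n\,k} = \rad(\det_{n\,k})$, which forces $\Img(T_0) = \M^0_{n\,k}(\F)$; this is the one place I'd want to be careful.

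\textbf{Transfer to the even case.} Now apply Lemma~\ref{lem:FromNKOddToNKEvenIfKerW} to $T_0$: setting $S := L^{-} \circ T_0 \circ L^{+}$, a linear map on $\M_{(n-1)\,k}(\F)$, part~\ref{lem:FromNKOddToNKEvenIfKerW:part2} gives $T_0 = L^{+} \circ S \circ L^{-}$, and Lemma~\ref{lem:LiftDownDetPres} gives that $S$ preserves $\det_{(n-1)\,k}$. Since $(n-1) + k$ is even, $n-1 \ge k+1$ — but the even theorem (Lemma~\ref{lem:MainTheoremEven}) needs $n-1 \ge k+2$; the hypothesis $n \ge k+2$ together with $n+k$ odd actually gives $n \ge k+3$ (as $n = k+2$ would make $n+k$ even), so $n - 1 \ge k + 2$ and Lemma~\ref{lem:MainTheoremEven} applies. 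Hence there exist $A' \in \M_{(n-1)\,(n-1)}(\F)$, $B' \in \M_{k\,k}(\F)$ satisfying the analogue of~\eqref{eq:TwoSideMul} with $S(Y) = A'YB'$. Then Lemma~\ref{lem:MatRepLift} (note its statement uses $T = L^{-}\circ S \circ L^{+}$ but its proof uses $T = L^{+}\circ S\circ L^{-}$, which is our $T_0$) produces $A = M^{+}A'M^{-} \in \M_{n\,n}(\F)$ and $B = B'$ satisfying~\eqref{eq:TwoSideMul}, equivalently~\eqref{thm:MainTheoremCullisNKOdd:eqq} via the reformulation $\sgn(d) = (-1)^{i_1+\cdots+i_k - 1 - \cdots - k}$, with $T_0(X) = AXB$. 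Combining, $T(X) = T_0(X) + \phi(X) = AXB + \phi(X)$ with $\phi\colon \M_{n\,k}(\F)\to W_{n\,k}$ linear, which is exactly~\eqref{thm:MainTheoremCullisNKOdd:eq}.

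\textbf{Main obstacle.} The bookkeeping is routine once the structure is in place; the only genuine point requiring care is verifying $\Img(T_0) = \M^0_{n\,k}(\F)$ (equality, since Lemma~\ref{lem:FromNKOddToNKEvenIfKerW} is stated with that hypothesis), which rests on the surjectivity-mod-radical property of determinant preservers from Lemma~\ref{lem:RadicalInvariant} combined with $W_{n\,k}\oplus\M^0_{n\,k}(\F) = \M_{n\,k}(\F)$. A secondary point is tracking that the dimension hypothesis $n\ge k+2$ for the odd case is strong enough to give $n-1\ge k+2$ needed to invoke the even theorem, which works because parity rules out $n = k+2$.
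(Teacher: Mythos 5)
Your proof is correct and follows essentially the same route as the paper: the same decomposition $T = T_0 + \phi$ with $T_0 = L^{+}\circ L^{-}\circ T$ (which coincides with the paper's $\tilde T = T-\phi$), the same reduction to the even case via $S = L^{-}\circ T_0\circ L^{+}$, Lemma~\ref{lem:LiftDownDetPres} and Lemma~\ref{lem:MainTheoremEven}, and the same lift back through Lemma~\ref{lem:MatRepLift}. The two points you flag — that $\Img(T_0)=\M^{0}_{n\,k}(\F)$ holds with equality because a $\det_{n\,k}$-preserver is surjective modulo $\rad(\det_{n\,k})=W_{n\,k}$ and $\M^{0}_{n\,k}(\F)$ is a complement of $W_{n\,k}$, and that parity forces $n\ge k+3$ so that $n-1\ge k+2$ as Lemma~\ref{lem:MainTheoremEven} requires — are indeed left implicit in the paper, and your treatment of both is correct.
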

\begin{proof}
Let us show the sufficiency. Let $T$ be a linear map which satisfies the condition~\eqref{thm:MainTheoremCullisNKOdd:eq} for certain $A \in \M_{n\, n}(\F)$, $B \in \M_{k\, k}(\F)$ satisfying the condition~\eqref{thm:MainTheoremCullisNKOdd:eqq} and a linear map $\phi\colon \M_{n\,k}(\F) \to W_{n\,k}$. 

Lemma~\ref{lem:TwoSidedMulPreservesDet} implies that a linear map $T - \phi$ preserves $\det_{n\,k}$ . Therefore, $T = \left(T - \phi\right) + \phi$ preserves $\det_{n\,k}$ by Corollary~\ref{cor:AdditionLinearMapNPLusKOdd}. 

Let us prove the necessity. Assume that $T\colon \M_{n\,k} (\F) \to \M_{n\,k} (\F)$ is a linear map preserving $\det_{n\, k}$. Let $\phi\colon \M_{n\,k}(\F) \to W_{n\,k}$ be a linear map defined by
\[
\phi(X) = \begin{pmatrix}T[n|)\\\vdots\\ T[n|)\end{pmatrix},
\]
$\tilde{T}\colon \M_{n\,k} (\F) \to \M_{n\,k} (\F)$ be a linear map defined by $\tilde{T} = T - \phi$ and $S$ be a linear map on $\M_{(n-1)\,k}(\F)$  defined by $S = L^{-}\circ \tilde{T} \circ L^{+}$. 

The definition of $\tilde{T}$ implies that $\Img(\tilde{T}) \subseteq \M^{0}_{n\,k}(\F)$. In addition, $\tilde{T}$ preserves $\det_{n\,k}$ by Corollary~\ref{cor:AdditionLinearMapNPLusKOdd} applied to $T$ and $(-\phi)$. Thus, $\tilde{T}$ satisfies the conditions of Lemma~\ref{lem:FromNKOddToNKEvenIfKerW}. In particular, 
\begin{equation}\label{thm:MainTheoremCullisNKOdd:eq1}
\tilde{T} = L^{+}\circ S \circ L^{-}
\end{equation}
 by Lemma~\ref{lem:FromNKOddToNKEvenIfKerW}\ref{lem:FromNKOddToNKEvenIfKerW:part2}.

Since $\tilde{T}$ preserves $\det_{n\,k}$, then $S$ preserves $\det_{(n-1)\,k}$ by Lemma~\ref{lem:LiftDownDetPres}.\linebreak Lemma~\ref{lem:MainTheoremEven} implies that there exist $A' \in \M_{(n-1)\,(n-1)}(\F)$ and $B' \in \M_{k\,k}(\F)$ satisfying the condition~\eqref{eq:TwoSideMul} such that 
\begin{equation}\label{thm:MainTheoremCullisNKOdd:eq2}
S(Y) = A'YB'\;\;\mbox{for all}\;\; Y \in \M_{(n-1)\,k}(\F).
\end{equation}

Thus, the equalities~\eqref{thm:MainTheoremCullisNKOdd:eq1} and~\eqref{thm:MainTheoremCullisNKOdd:eq2} allow us to conclude from Lemma~\ref{lem:MatRepLift} that there exist $A \in \M_{n\, n}(\F)$ and $B \in \M_{k\, k}(\F)$ satisfying the condition~\eqref{eq:TwoSideMul} such that 
$$\tilde{T}(X) = AXB\;\;\mbox{for all}\;\; X \in \M_{n\,k}(\F).$$
Therefore,
\[
T(X) = \tilde{T}(X) + \phi(X) = AXB + \phi(X)
\]
for all $X \in \M_{n\,k} (\F).$
\end{proof}

\begin{remark}If $T$ is a linear map on $\M_{n\,k}$ preserving $\det_{n\,k}$, then the representation~\eqref{thm:MainTheoremCullisNKOdd:eq} may be not unique. For example,
\[
I_nXI_k + \begin{pmatrix}x_{1\,1} & \cdots & x_{1\,k} \\ \vdots & \ddots & \vdots\\ x_{1\,1} & \cdots & x_{1\,k}\end{pmatrix} = AXI_k\;\;\mbox{for all}\;\; X \in \M_{n\,k}(\F),
\]
where $A  = E_{1\,1} + \ldots + E_{n\,n} + E_{1\,n} + \ldots + E_{n\,n}$, and the pair of matrices $(A, I_k)$ satisfies the condition~\eqref{thm:MainTheoremCullisNKOdd:eqq}.
\end{remark}

\begin{remark}There exist linear maps of the form~\eqref{thm:MainTheoremCullisNKOdd:eq} which do not admit a representation $X \mapsto A'XB'$ for any $A' \in \M_{n\,n}$, $B' \in \M_{k\,k}$. For example a linear map $T'\colon \M_{n\,k}(\F) \to \M_{n\,k}(\F)$ defined by  
\[
T'(X) =  X + \begin{pmatrix}-x_{1\,1} & 0 & \cdots & 0\\ \vdots & \vdots & \ddots & \vdots\\ -x_{1\,1} & 0 & \cdots & 0
\end{pmatrix}\;\;\mbox{for all}\;\; X \in \M_{n\,k}(\F)
\]
has the form~\eqref{thm:MainTheoremCullisNKOdd:eq} but increases a matrix rank because
\[
\rk\left(E_{1\,1} + E_{1\,2}\right) = 1
\]
and
\[
\rk\left(T'(E_{1\,1} + E_{1\,2})\right) = \rk\left(-E_{2\,1} + \ldots + -E_{n\,1} + E_{1\,2}\right) = 2.
\]
This implies that there do not exist $A' \in \M_{n\,n}$, $B' \in \M_{k\,k}$ such that 
\begin{equation}\label{remeq1}
T'(X) = A'X'B\;\;\mbox{for all}\;\; X \in \M_{n\,k}(\F)
\end{equation}
since every map of the form~\eqref{remeq1} cannot increase the rank of an argument.
\end{remark}

\section*{Declaration of competing interest}

The authors declare that they have no known competing financial interests or personal relationships that could have appeared to influence the work reported in this paper.

\section*{Acknowledgements}

The research of the second author was supported by the scholarship of the Center for Absorption in Science, the Ministry for Absorption of Aliyah, the State of Israel.

\section*{Data availability}

No data was used for the research described in the article.

\bibliographystyle{plain}
\bibliography{cullisgeneral}

\end{document}